\DeclareFontFamily{U}{wncy}{}
\DeclareFontShape{U}{wncy}{m}{n}{<->wncyr10}{}
\DeclareSymbolFont{mcy}{U}{wncy}{m}{n}
\DeclareMathSymbol{\Sha}{\mathord}{mcy}{"58}
\newcounter{ctfig}
\newenvironment{myempty}{\refstepcounter{ctfig}}{}
\newcommand{\Z}{{\mathbb Z}}
\newcommand{\Q}{{\mathbb Q}}
\newcommand{\C}{\mathcal{C}}
\newcommand{\A}{\mathbb{A}}
\newcommand{\F}{\mathcal{F}}
\renewcommand{\P}{\mathbb{P}}
\renewcommand{\S}{\mathcal{S}}
\newcommand{\cyc}{\mathcal{C}}
\newcommand{\T}{\mathcal{T}}
\renewcommand{\L}{\mathcal{L}}
\newcommand{\JacC}{{\hbox{Jac}_{\lower.5pt\hbox{$_\C$}}}}
\newcommand{\JacF}{{\hbox{Jac}_{\lower.5pt\hbox{$_\F$}}}}
\newcommand{\etale}{{\hbox{\' et}}}
\newcommand{\tr}{\mathop{\rm tr}}
\newcommand{\Frob}{\mathop{\rm Frob}}
\newcommand{\Gal}{\mathop{\rm Gal}}
\theoremstyle{plain}
\newtheorem{thm}{Theorem}[section]
\newtheorem{conj}[thm]{Conjecture}
\newtheorem{lemma}[thm]{Lemma}
\newtheorem{prop}[thm]{Proposition}
\newtheorem{cor}[thm]{Corollary}
\theoremstyle{definition}
\newtheorem{meth}[thm]{Method}
\newtheorem{example}[thm]{Example}
\newtheorem{remark}[thm]{Remark}
\newtheorem{defn}[thm]{Definition}
\def\Q{{\mathbb Q}}
\def\F{{\mathbb F}}
\def\Z{{\mathbb Z}}
\def\C{{\mathbb C}}
\def\O{{\mathcal O}}
\newcommand{\Pic}{\mathop{\rm Pic}}
\begin{document}
%% \foreach \x in{graphics,floats}{%
%%     \immediate\write18{pdflatex -jobname=template-\x\space "\def\noexpand\placeholder{\x} \noexpand\input{template}"}%
%%     \includepdf[pages=-]{template-\x}%
%% }
\bibliographystyle{plain}
\bibstyle{plain}

\title[Modular forms from $\phi^4$ theory]{New realizations of modular forms in Calabi-Yau threefolds arising from $\phi^4$ theory}
%\title{New realizations of modular forms in Calabi-Yau threefolds arising from $\phi^4$ theory}

\author{Adam Logan}
\email{adam.m.logan@gmail.com}
\address{The Tutte Institute for Mathematics and Computation,
P.O. Box 9703, Terminal, Ottawa, ON K1G 3Z4, Canada}

\subjclass[2010]{14J32; 11F11, 11F23, 14E15, 81Q30}
\keywords{Calabi-Yau varieties, modular forms, $\phi^4$ theory}
\date{\today}

\begin{abstract}
Brown and Schnetz found that the number of
points over $\F_p$ of a graph hypersurface is often related to the coefficients
of a modular form.  We set some of the reduction techniques used to discover
such relation in a general geometric context.  We also prove the relation for
one example
of a modular form of weight $4$ and two of weight $3$, refine the
statement and suggest a method of proving it for four more of weight $4$,
and use the one proved example to construct two new rigid Calabi-Yau threefolds
that realize Hecke eigenforms of weight $4$
(one provably and one conjecturally).
\end{abstract}

\maketitle

\section{Introduction}\label{sec:intro}
The bijective correspondence between rational Hecke eigenforms of weight $2$ 
of minimal level $N$
and elliptic curves over $\Q$ of conductor $N$ up to isogeny is one of the
central concepts in modern number theory.  It has been generalized in many 
ways, for example by replacing elliptic curves over $\Q$ with elliptic curves
over a totally real number field: the corresponding objects then appear to
be Hilbert modular forms.  In another direction, we might seek to increase the
weight of the form and the dimension of the corresponding variety.  

For odd $k$, the presence of $-1$ in $\Gamma_0(N)$ forces rational
eigenforms to have an imaginary quadratic Nebentypus character of conductor
dividing $N$ and to be of CM type.
Sch\"utt showed \cite{schutt-cm}
that rational newforms of CM type and weight $k$ up to twist
are in bijection with imaginary quadratic fields whose class group is killed
by $k-1$.  For $k=2$ this is essentially the classical fact that elliptic curves
with complex multiplication up to twist correspond to imaginary quadratic fields
of class number $1$; for $k=3$ such fields have been studied for centuries in
the guise of Euler's numeri idonei, which are the positive integers $n$
for which the primes represented by a quadratic form of discriminant $-n$ are
determined by congruence conditions.  There is a list of such integers,
found in \cite{elkies-schutt}, which is
known to be complete if the generalized Riemann hypothesis is assumed; without
this assumption, the list is complete with at most one exception.

Elkies and Sch\"utt \cite{elkies-schutt}
then showed that, in the case $k=3$, every 
modular form $f$ corresponding to a known imaginary quadratic field with class
group killed by $2$ is realized in a singular K3 surface $S_f$, 
in the sense that the
action of $\Gal(\bar \Q/\Q)$ on its \'etale cohomology includes the 
representation associated to $f$.  That is, the semisimplification of
the representation
$\phi: \Gal(\bar \Q/\Q) \to GL_{22,\Z_\ell}$ on $H^2_{\etale}(S_f,\Z_\ell)$ 
has a $2$-dimensional component $\rho_\ell$ such that
$\tr \rho_\ell(\Frob_p) = a_p$ for all but finitely many $p$, where $a_p$ is the
eigenvalue of $S_f$ for the Hecke operator $T_p$.
The complement of this component is the twist of a representation of
finite image giving the Galois action on the Picard group of $S_f$ 
by the cyclotomic character.
More concretely, the number of $\F_p$-points of $S_f$ is equal to 
$p^2+1+a_p+pn(p)$, where $n(p) \in \Z$ depends only on the decomposition of $p$
in the field of definition of the Picard group, which is a finite extension
of $\Q$.

\begin{defn}\label{realize}
Let $f$ be a newform with integral coefficients and $V$ a smooth variety
over $\Q$.  If for
all $\ell$ the $\ell$-adic representation attached to $f$ is a component of
the $\ell$-adic representation attached to $H^i_{\etale}(V,\Z_\ell)$, we say that 
$V$ {\em realizes} $f$.
\end{defn}

Thus the result of Elkies and Sch\" utt shows that every integral newform of 
weight $3$ is realized by a K3 surface with one possible exception.  It is
a very important question, asked by Mazur, van Straten, and others, whether
an analogous result holds for weight $>3$.
To discuss this problem, we must introduce Calabi-Yau varieties.

\begin{defn}[{\cite[Section 1.1]{meyer-diss}}]\label{def-smooth-cy}
A {\em Calabi-Yau variety} of dimension $d$
is a smooth proper variety with trivial canonical class such
that $h^{i,0} = 0$ for all $0 < i < d$.
\end{defn}

\begin{defn}\label{def-cy} A {\em singular Calabi-Yau variety}
is a variety $V$ that is birationally equivalent to a smooth Calabi-Yau variety
and is a limit of smooth Calabi-Yau varieties.
\end{defn}

\begin{example} Let $n > 1$ and let $c_1, \dots, c_{n-d}$ be positive integers
adding to $n+1$.  Then a sufficiently general intersection of hypersurfaces
of degree $c_1, \dots, c_{n-d}$ in $\P^n$ is a Calabi-Yau variety of dimension 
$d$.  If $X$ is such a complete intersection such that the canonical bundle on
the nonsingular subset of such an $X$
extends to all of $X$ and $X$ admits a resolution of singularities with
trivial canonical bundle, it is a singular Calabi-Yau variety.
\end{example}
%%% ref for du Val sings? or canonical ones?

\begin{defn}[{\cite[Section 1.1]{meyer-diss}}] A smooth
Calabi-Yau variety of dimension $d$ for which $h^{d-1,1} = 0$ is
said to be {\em rigid}.  A singular Calabi-Yau variety is
rigid if it has a resolution of singularities which is a rigid Calabi-Yau
variety.
\end{defn}

A considerable amount of work has been devoted to finding
rigid Calabi-Yau threefolds defined over $\Q$.  
It is known \cite{gouvea-yui}, \cite{dieu-man} that the 
representation on $H^3$ of such a variety has the same semisimplification
as the representation associated to an integral Hecke eigenform of weight $4$.
Meyer \cite{meyer-diss} gives
a large number of examples of rigid Calabi-Yau threefolds and corresponding
modular forms, but it seems that the only general
method of constructing a threefold starting from a modular form of weight $4$
(as opposed to
recognizing a modular form in a variety constructed without reference to it)
is that of \cite{cynk-schutt}, which is applicable only to eigenforms of
CM type coming from imaginary quadratic fields of class number $1$ or $3$.

It is not known whether there are finitely or infinitely many
deformation families of Calabi-Yau threefolds.  %% ref? 
On the other hand, tables such as that of \cite[Appendix~C]{meyer-diss}
make it plausible that there are infinitely many 
newforms of weight $4$ with integer coefficients up to twist, although this
may be quite difficult to prove (before the famous work of Wiles and
Taylor-Wiles the analogous
statement was apparently not known for weight $2$).
In any case, each family can contribute
realizations of only finitely many newforms up to twist on rigid Calabi-Yaus,
so if there are finitely many families and infinitely many newforms
up to twist it is not possible for all of them to be realized.

%% Even if a Calabi-Yau threefold is not rigid, understanding the Galois action
%% on its \'etale $H^3$ is still very interesting.  In some cases 
%% the representation contains $H^{3,0} \oplus H^{0,3}$ as an irreducible component,
%% and then there is a congruence $N_p \equiv a_p+1$, where $N_p$ is the number
%% of points over $\F_p$ and $a_p$ is the eigenvalue of $T_p$ as before.
%% In \cite{meyer-diss} the theory of this situation is treated in section 1.5.2,
%% with examples passim.  In other cases, as in \cite{cs-hmf},
%% the representation is related to Hilbert modular forms
%% of weight $(2,4)$.  In general one expects the representations to be related
%% to some kind of automorphic form, although there may be no independent method
%% of finding a basis for the appropriate space.

The arithmetical results of this paper are summarized in the following theorem:
\begin{thm}\label{main} There are rigid Calabi-Yau threefolds defined over $\Q$
for which the representation on $H^3$ is the same as that associated to the
modular forms labeled $13/1$, $78/4$ in \cite[Appendix C]{meyer-diss}.  If
the tables of \cite{tables} are complete for cubic extensions of $\Q$
unramified outside $\{2,3,5,13,23\}$, the same holds for the form $390/5$.
\end{thm}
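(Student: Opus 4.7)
The plan is to prove the theorem in three parts, one per form. The realization of $13/1$ by an explicit rigid Calabi-Yau threefold $Y_0$ is the ``one proved example'' referenced in the abstract: $Y_0$ is produced earlier in the paper by applying the reduction techniques to a $\phi^4$ graph hypersurface, and its $H^3$ is shown to carry the Deligne representation of $13/1$ via a Faltings-Serre-Livn\'e style comparison of $\Frob_p$-traces after bounding the bad primes. With $Y_0$ in hand, the threefolds realizing $78/4$ and (conditionally) $390/5$ are to be built from $Y_0$ by geometric operations designed to add the primes $\{2,3\}$ and $\{2,3,5\}$ respectively to the conductor while preserving the Calabi-Yau property.

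For $78/4$ I would construct a candidate $Y_1$ as a crepant resolution of a suitable fibre product of $Y_0$ with a genus-one curve having prescribed bad reduction at $2$ and $3$, or alternatively as the base change of $Y_0$ along a degree-two cover of $\P^1$ ramified above $2$ and $3$; the precise choice is dictated by the Euler factors of $78/4$ at $2$ and $3$, which can be read off from \cite[Appendix~C]{meyer-diss}. After establishing that $Y_1$ is Calabi-Yau, rigid (with $h^{2,1}(\widetilde Y_1)=0$ on a crepant resolution $\widetilde Y_1$), and has bad primes contained in $\{2,3,13\}$, I apply Faltings-Serre-Livn\'e: since $H^3$ of a rigid Calabi-Yau threefold is two-dimensional, modularity reduces to a finite comparison of Frobenius traces on $H^3(\widetilde Y_{1,\bar \Q},\Q_\ell)$ against the Hecke eigenvalues of $78/4$, carried out by point counts over $\F_p$.

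The case $390/5$ follows the same blueprint but with a further base change or quadratic twist introducing ramification at $5$. The subtlety lies in certifying the residual representation: Livn\'e's method requires controlling all $S$-ramified Galois extensions of $\Q$ of bounded degree with $S=\{2,3,5,13,23\}$, and for cubic degree this is accessible only via exhaustive enumeration. Granting that \cite{tables} provides a complete list of such cubic fields, the residual image is pinned down and the trace comparison again yields isomorphism of Galois representations; without that input the argument stalls at the residual identification step, which is why this statement is conditional.

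The main obstacle throughout is controlling the singularities introduced by these geometric modifications well enough to (i) produce an explicit crepant resolution whose Hodge numbers can actually be computed, and (ii) certify that the set of bad primes equals the set predicted by the target form, so that Livn\'e's criterion kicks in at a manageable bound. For $78/4$ these are laborious but direct computations; the genuine difficulty in the $390/5$ case is the absence of an unconditional tool for ruling out missing cubic fields at the predicted ramification set, which is precisely what forces the conditional form of the statement.
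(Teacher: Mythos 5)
Your outline of the modularity verification (bound bad primes, pin down the residual representation, apply Faltings--Serre or Livn\'e via trace comparisons, conditionality for $390/5$ resting on completeness of the cubic-field tables) matches the paper in spirit. But the construction step is where your plan diverges substantially from the paper and where it would run into trouble.

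The paper does \emph{not} produce the threefolds realizing $78/4$ and $390/5$ by geometric modification of $CY_{13}$ (fibre product, base change, or twist). Instead, after exhibiting $CY_{13}$ as a Cynk--Szemberg resolution of a double cover of $\P^3$ branched along an octic $V_0(-V_0/4+V_2)$, it observes that this octic sits inside a natural two-dimensional linear system $V_0\cdot\langle V_0, V_1, V_2\rangle$ (the octics singular along a fixed $(4,13)$-configuration of lines), and that the generic member has $h^{2,1}=2$ (Proposition~\ref{twod-def}). Rigidity is achieved exactly at those points of the parameter $\P^2$ where the octic acquires extra isolated singularities; these are located by an incidence-variety computation, and among the rational such points three give, up to twist, the newforms $13/1$, $78/4$, and $390/5$. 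So all three threefolds arise as \emph{specializations within one family}, not as derived objects built from $CY_{13}$.

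Your proposed constructions would not obviously produce rigid Calabi--Yau threefolds. A fibre product of a threefold with a curve is a fourfold, not a threefold, so that route is a nonstarter as stated. The base-change-along-a-double-cover-of-$\P^1$ idea is dimensionally sensible given the K3 fibration of Remark~\ref{k3-fib}, but rigidity is delicate: the pulled-back fibration generically has $h^{2,1}>0$ contributed by the variation of the transcendental lattices of the K3 fibres, and nothing in your argument controls this. A quadratic twist of the threefold changes the Galois representation only by a character, so it cannot move between $13/1$ and $390/5$, which are not twists of one another. The paper avoids all of these problems precisely by keeping the ambient double-octic geometry fixed and moving the branch surface in its linear system, with Cynk--van Straten deformation theory (via $h^1(\T_{\tilde\P^3}\otimes\tilde{\L}^{-1})=0$) certifying rigidity uniformly at the special points.
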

For the form of level $390$, there was a known nonrigid Calabi-Yau threefold
whose $H^3$ was conjectured to contain this representation; for the other
two, even this was not available.
The construction of the threefold realizing the newform of level $13$
was inspired by mathematical work on the foundations of quantum field theory.
\begin{defn}\label{graph-var}
Let $G$ be a graph with vertex set $V = \{v_1,\dots,v_m\}$ and edge set
$E = \{e_1,\dots,e_n\}$, and let $R = \Z[x_1,\dots,x_n]$.  Let $T$ be the
set of spanning trees of $G$.  As in
\cite[Section 1.1]{k3-phi4}, define
$$\label{psig}\begin{aligned}
\Psi_G &= \sum_T \prod_{e_i \notin T} x_i.\end{aligned}$$
Let $X_G$ be the affine variety $\Psi_G = 0$; it is called the
{\em graph hypersurface} of $G$.  If $m \ge 3$ then the
number of $\F_p$-points of $X_G$ is a multiple of $p^2$, so we define
$c_2(p) = c_2(G)_p$ to be this number divided by $p^2$ and reduced mod $p$.
\end{defn}

If no restriction is placed on $G$, then $X_G$ contains 
essentially arbitrary motives \cite{belkale-brosnan}.
We now suppose that $G$ is obtained by deleting a single vertex from a 
$4$-regular graph, which places us in the setting known as $\phi^4$-theory. 
In this context it had been suggested % by Kontsevich
that the number of 
$\F_p$-points might be a polynomial in $p$.
Brown and Schnetz refuted this by giving an example \cite[Section 6.2]{k3-phi4} 
in which $c_2(p) \equiv a_p \bmod p$ where the $a_p$ are the eigenvalues
for a Hecke eigenform of weight $3$ and level $7$.  To do so, they related
$c_2(p)$ to the number of points on a K3 surface of Picard number $20$.
In later work \cite{mf-qft},
they found $16$ Hecke eigenforms that appear to match
the $c_2(p)$ for various graphs.

For the five graphs of \cite[Figure 5]{mf-qft} that
numerically match a modular form of weight $4$, we construct a threefold
for which the number of $\F_p$-points is congruent to $c_2(p) \bmod p$.
For the form of level $13$, we will show that it is a rigid Calabi-Yau threefold
and that the number of $\F_p$-points is indeed congruent mod $p$ to the 
eigenvalue of the newform of weight $4$ and level $13$.  
In addition, by studying a family of threefolds that specializes to this one,
we construct two new rigid Calabi-Yau threefolds,
one realizing the modular form of level $78$, the other apparently that of
level $390$.

For each of the other four graphs, matching the forms of levels $5, 6, 7, 17$, 
we conjecture that the variety is a rigid Calabi-Yau threefold and give a
formula that appears to count its $\F_p$-points.
These forms are already
known to be realized on rigid Calabi-Yau threefolds, but the construction here 
is completely different.  In
light of the Tate conjecture \cite[Conjecture 1.10]{meyer-diss}
we expect ours to be in correspondence with the previous ones.  However, we
do not know how to exhibit such a correspondence.
In addition, we will verify that the graphs labeled $(3,8)$ and $(3,12)$ in
\cite[Figure 5]{mf-qft} correspond to singular K3 surfaces realizing
modular forms of the indicated level.  
%% In future work I hope
%% to use these methods to explain some of the $c_2(p)$ sequences in \cite{mf-qft}
%% that have no known description in terms of modular forms.

It would not have been possible to write this paper without substantial 
electronic computation.  The Magma system \cite{magma}
was used.

{\em Acknowledgments.}  I would like to thank Karen Yeats for giving a
very inspiring seminar that sparked my interest in this field of 
research and for encouraging me to pursue this project.  I also thank
Henri Darmon and Bas Edixhoven for their explanations of the Kuga-Sato
construction,
Colin Ingalls and Owen Patashnick for their careful readings of an earlier
draft of this paper and for many helpful conversations and references, and
Noriko Yui for a valuable discussion of the contents of the paper and the
general area of modular Calabi-Yau threefolds.

\section{The five-invariant}
In the introduction we associated a polynomial to a graph $G$ and defined
the graph hypersurface associated to it. % in (\ref{psig}).
Unfortunately, the variety is difficult to
work with because it is of large dimension.  In this section we describe some
of the constructions used by Brown, Schnetz, and Yeats
\cite{k3-phi4}, \cite{mf-qft}, \cite{bsy}
to obtain varieties of lower dimension for which the point count over $\F_p$
is the same mod $p$.
In contrast to their work, we will regard our varieties
as projective rather than affine.

First, we choose an arbitrary orientation of the edges of $G$.  We then 
define a symmetric $(n+m) \times (n+m)$ matrix $\tilde M_G$
over $\Z[x_1,\dots,x_n]$ (recall that $m, n$ are the
numbers of vertices and edges of $G$) as in \cite[Definition 8]{k3-phi4}.
That is, the upper left block is a diagonal matrix with entries $x_i$,
the upper right and lower left blocks are the oriented incidence matrix and
its transpose, and the lower right block is $0$.

The sum of the last $m$ rows of this matrix is $0$, so we choose one 
arbitrarily and delete it as well as the corresponding column to obtain
$M_G$.  The determinant of $M_G$ is $\pm \Psi_G$.

\begin{defn}[{\cite[Definition 9]{k3-phi4}}]
Let $I, J, K$ be subsets of the set of
edges with $|I| = |J|$.  Let $M_G(I,J)_K$ be the matrix $M_G$ with the rows
corresponding to $I$ and columns corresponding to $J$ removed, and the
variables associated to $K$ set to $0$.  Let $\Psi^{I,J}_{G,K}$ be its
determinant.
\end{defn}

\begin{defn}[{\cite[Definition 12]{k3-phi4}}] Let $i,j,k,l,m$ be five distinct
edges in $G$.  The {\em five-invariant} ${}^5\Psi_G(i,j,k,l,m)$ is 
defined to be $$\det 
\begin{pmatrix}\Psi^{ij,kl}_{G,m} & \Psi^{ik,jl}_{G,m}\cr
\Psi^{ijm,klm}_{G,\emptyset} & \Psi^{ikm,jlm}_{G,\emptyset}
\end{pmatrix}.$$
\end{defn}

\begin{remark} Let $G$ be the deletion of a vertex from a $4$-regular graph,
so that $G$ has $n-1$ vertices and $2n-4$ edges.  Then the spanning trees 
have order $n-2$, which means that $\Psi_G$ is homogeneous of degree $n-2$ in
$2n-4$ variables.  On the other hand, $\Psi^{I,J}_{G,K}$ has degree
$\deg \Psi_G - |I|$.  Thus in general the degree of ${}^5\Psi_G(i,j,k,l,m)$
is $n-4+n-5 = 2n-9$.  Since it does not depend on the variables associated to
edges $i,j,k,l,m$ it is a polynomial in $2n-9$ variables.  Observe also that
$\Psi_G$ has degree at most $1$ in every variable, so ${}^5\Psi_G(i,j,k,l,m)$
has degree at most $2$.
\end{remark}

Up to sign, this determinant depends only on the set $\{i,j,k,l,m\}$,
not on the order.  Brown and Schnetz prove the following:

\begin{thm}[{\cite[Theorem 3]{k3-phi4}}]
Let $G$ be a graph with at least five edges $e_1, \dots, e_5$
such that $N_G = 2h_G$ and $N_\gamma > 2h_\gamma$ for all strict subgraphs
$\gamma \subsetneq G$, where $N$ is the number of edges of a graph and
$h$ its first Betti number.
Then for all primes $p$, the number of solutions to the equation
${}^5\Psi_G(e_1,e_2,e_3,e_4,e_5) = 0$ over $\F_p$ is congruent mod $p$ to
$-c_2(G)_p$.
\end{thm}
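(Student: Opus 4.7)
The plan is to use the ``denominator reduction'' procedure of Brown-Schnetz, successively eliminating five edge variables. The basic identity to establish first is that for any polynomial $F = xA + B$ linear in $x$ with $A, B \in \F_p[y_1,\dots,y_r]$, we have
$$\#\{F=0\}(\F_p) = p^r - \#\{A=0\}(\F_p) + p \cdot \#\{A=B=0\}(\F_p),$$
because when $A \neq 0$ there is a unique $x$ solving $F=0$, and when $A=0$ we require $B=0$ but then $x$ is free. Applying this to $\Psi_G$ in the variable $x_1$, with $A = \Psi^{1,1}_{G,\emptyset}$ and $B = \Psi_{G,\{1\}}$, and using that $N_G = 2h_G \geq 6$ whenever $N_G \geq 5$ (since $h_G$ is an integer and $h_G \geq 5/2$), the $p^{n-1}$ term contributes nothing to $c_2(G)_p$, so $c_2(G)_p \equiv p^{-2}\bigl(-\#\{A=0\}(\F_p) + p\,\#\{A=B=0\}(\F_p)\bigr) \pmod{p}$.

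Next I would iterate this reduction four more times on the remaining edge variables. This is legitimate because the Dodgson polynomials $\Psi^{I,J}_{G,K}$ inherit multilinearity from $\Psi_G$, so the basic identity applies at every stage. One thereby obtains a combination of $\F_p$-point counts on intersections of Dodgson varieties, weighted by powers of $p$. The hypothesis $N_\gamma > 2h_\gamma$ for all proper subgraphs $\gamma \subsetneq G$ is used here to ensure that the intermediate Dodgson polynomials never vanish identically on a large component; otherwise the $-\#\{A=0\}$ term at some intermediate step would carry too few factors of $p$ and leak into the mod-$p$ answer. After five reductions, the surviving contribution modulo $p^3$ is (up to sign) $p^2$ times the $\F_p$-count of the variety defined by the four Dodgson polynomials $\Psi^{ij,kl}_{G,m}$, $\Psi^{ik,jl}_{G,m}$, $\Psi^{ijm,klm}_{G,\emptyset}$, $\Psi^{ikm,jlm}_{G,\emptyset}$. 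A $2 \times 2$ determinantal identity of Dodgson/Pl\"ucker type then collapses this count modulo $p$ to that of the single hypersurface ${}^5\Psi_G(i,j,k,l,m) = 0$, yielding the stated congruence with an overall sign of $-1$ accumulated from the five reduction steps.

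The main obstacle is the combinatorial bookkeeping of signs and powers of $p$ through the five successive reductions: at each step new auxiliary vanishing conditions arise, and one must show, using the subgraph condition $N_\gamma > 2h_\gamma$, that they are negligible modulo $p^3$ (and hence modulo $p$ after division by $p^2$). The cleanest way to organize this is probably an induction on the number of reductions, maintaining the invariant that the running count modulo $p^{k+2}$ is $(-1)^k p^k$ times the point count of an intersection of Dodgson varieties of ``expected'' codimension. A subsidiary difficulty is identifying the precise Pl\"ucker-type identity used at the fifth step to recognize the relevant intersection as the vanishing locus of ${}^5\Psi_G$; this identity should follow from the matrix-tree origin of $\Psi_G$, essentially as a relation among complementary minors of the symmetric matrix $M_G$.
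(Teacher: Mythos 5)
The paper does not give a proof of this statement; it is quoted verbatim from Brown--Schnetz with the citation \cite[Theorem~3]{k3-phi4}, so you are reconstructing their argument rather than one given in this document.

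Your overall strategy --- denominator reduction, eliminating edge variables one at a time using the multilinearity of $\Psi_G$ --- is the right one, and the basic counting identity you state, $\#\{xA+B=0\}(\F_p) = p^r - \#\{A=0\}(\F_p) + p\cdot\#\{A=B=0\}(\F_p)$, is correct and is the engine of the reduction. You have also correctly located the role of the hypothesis $N_\gamma > 2h_\gamma$ for proper subgraphs: it is precisely what is needed for the Chevalley--Warning/Ax--Katz-type divisibility that kills the correction terms modulo the relevant power of $p$.

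There is a real gap in your description of the last step. You assert that after five reductions the surviving contribution modulo $p^3$ is $\pm p^2$ times the $\F_p$-count of the four-fold intersection $\Psi^{ij,kl}_{G,m}=\Psi^{ik,jl}_{G,m}=\Psi^{ijm,klm}_{G,\emptyset}=\Psi^{ikm,jlm}_{G,\emptyset}=0$, and that a Pl\"ucker-type identity ``collapses'' this to the count of the determinant hypersurface ${}^5\Psi_G=0$. This is not how the mechanism works, and there is no general identity making $\#\{a=b=c=d=0\}$ agree mod $p$ with $\#\{ad-bc=0\}$. The five-invariant does not arise by intersecting four Dodgson varieties; it arises as a \emph{resultant}. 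Indeed, expanding in $x_m$ gives $\Psi^{ij,kl}_G = x_m\Psi^{ijm,klm}_{G} + \Psi^{ij,kl}_{G,m}$ and similarly for $\Psi^{ik,jl}_G$, so the $2\times 2$ determinant defining ${}^5\Psi_G(i,j,k,l,m)$ is precisely the resultant of $\Psi^{ij,kl}_G$ and $\Psi^{ik,jl}_G$ with respect to $x_m$. The correct final move is the resultant reduction (the content of \cite[Lemma~27]{k3-phi4}, restated as ``resultant reduction'' in Section~\ref{sec:red} of this paper), applied to a \emph{product} of those two Dodgson polynomials, not an intersection.

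This exposes a second, related omission: the Desnanot--Jacobi (Dodgson) identity for complementary minors of $M_G$. If one simply iterates linear reduction five times, the leading coefficient at each stage is a Dodgson polynomial $\Psi^{I,I}_G$ with both $I$'s equal, and one lands on $\Psi^{12345,12345}_G$, which is not the five-invariant. The Dodgson identity is what supplies the intermediate factorization of the running polynomial into a product of two Dodgson polynomials, which is the precondition for resultant reduction to apply and is therefore the step that actually produces the off-diagonal Dodgson polynomials appearing in ${}^5\Psi_G$. You mention ``a relation among complementary minors of the symmetric matrix $M_G$'' only as a side difficulty at the end, but it is in fact the central algebraic ingredient driving the reduction chain; without a precise statement and use of it, the induction you outline cannot arrive at the five-invariant.
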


\section{Reduction}\label{sec:red}
In this section we describe the two main constructions used in, among
others, \cite{k3-phi4}, \cite{mf-qft}, \cite{bsy},
that allow $c_2(G)_p$ to be computed from a variety
of manageable dimension.  First we restate them in terms of projective
rather than affine varieties.  Then we explain how
these reductions can be interpreted in terms of blowups of projective
varieties along linear subspaces and generalize to obtain 
a reduction technique not previously used in this context.

\begin{defn} Throughout the paper, the coordinates in an ordinary projective 
space $\P^n$ will always
be denoted $x_0, \dots, x_n$.  In weighted projective space $\P(k,1,1,\dots,1)$
the coordinates will be $t, x_0, \dots, x_n$, and in product projective space
$\P^m \times \P^n$ they will be $x_0, \dots, x_m, y_0, \dots, y_n$.
\end{defn}

\begin{defn} If $f_1, \dots, f_n$ are homogeneous polynomials and $q$ is
a prime power, let
$[f_1,\dots,f_n]_q$ be the number of points on the projective variety defined
by $f_1=\dots=f_n=0$ over $\F_q$.  If $V$ is a variety, let $[V]_q$ be the
number of $\F_q$-points of $V$.
\end{defn}

\begin{remark}
Let $V_P, V_A$ be the projective and affine varieties over $\F_q$ defined by
$f_1=\ldots=f_n=0$, where the $f_i$ are homogeneous polynomials.  Then
$[V_A]_q = (q-1)[V_P]_q+1$, so the sequences $[V_A]_q, [V_P]_q$ contain the
same information.
%However, we will find projective constructions such as the blowup useful later.
\end{remark}

\begin{defn} Let $V, W$ be projective varieties over $\Q$.
We say that $V$ and $W$ are 
{\em prime-similar} if $[V]_p-1 \equiv (-1)^{\dim W - \dim V}([W]_p-1) \bmod p$
for all but finitely many primes $p$.  If $W$ is a point we say that $V$ is 
{\em prime-trivial}.
\end{defn}

\begin{example} The Chevalley-Warning theorem \cite[Theorem 1.3]{serre-ca}
states that a subvariety of
$\P^n$ defined by equations the sum of whose degrees is at most $n$ is
prime-trivial.
\end{example}

\begin{lemma}\label{triv-blowup} 
Let $V$ be a variety over $\Q$ smooth along the subvariety $W$, and let
$\tilde V$ be $V$ blown up along $W$.  Then $\tilde V$ is prime-similar to 
$V$.
\end{lemma}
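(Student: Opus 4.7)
The plan is to compare the two point counts by exploiting the very concrete local structure of a blowup along a smooth center. Since $V$ is smooth along $W$, the blowup morphism $\pi : \tilde V \to V$ is an isomorphism away from $W$, so $\dim \tilde V = \dim V$; thus the sign in the definition of prime-similar is $+1$ and it suffices to prove $[\tilde V]_p \equiv [V]_p \pmod p$ for all but finitely many primes $p$.

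The key geometric input is that the exceptional divisor $E = \pi^{-1}(W)$ is the projectivized normal bundle $\P(N_{W/V})$, hence a Zariski-locally trivial $\P^{c-1}$-bundle over $W$, where $c$ denotes the codimension of $W$ in $V$. For almost all $p$ the whole picture (smooth integral models of $V$ and $W$, the blowup, and the description of $E$) spreads out over $\Z[1/N]$ and specializes correctly to characteristic $p$.

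With this in hand, the proof becomes a short point count. Stratifying each side gives
\[
[\tilde V]_p = [V \setminus W]_p + [E]_p, \qquad [V]_p = [V \setminus W]_p + [W]_p,
\]
so $[\tilde V]_p - [V]_p = [E]_p - [W]_p$. Since $E \to W$ is Zariski-locally trivial with fibre $\P^{c-1}$, applying inclusion-exclusion over a trivializing affine cover yields $[E]_p = [W]_p \cdot |\P^{c-1}(\F_p)| = [W]_p(1 + p + \cdots + p^{c-1})$. Therefore
\[
[\tilde V]_p - [V]_p = [W]_p (p + p^2 + \cdots + p^{c-1}) \equiv 0 \pmod p,
\]
which is exactly prime-similarity.

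The main (and essentially only) obstacle is the technical verification that blowing up commutes with reduction modulo $p$ and that the projective bundle structure of $E$ survives these reductions for all but finitely many $p$; both follow from standard spreading-out arguments once one fixes a good integral model of the pair $(V,W)$.
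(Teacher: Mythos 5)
Your proof is correct and is essentially the paper's proof: both stratify into $V\setminus W$ and the exceptional divisor $E$, note $E\to W$ has fibres $\P^{c-1}$, and conclude the discrepancy $[W]_p(p+\cdots+p^{c-1})$ vanishes mod $p$. You add slightly more detail on spreading out and invoke Zariski-local triviality of $\P(N_{W/V})$ where the paper just counts each fibre directly, but the argument is the same.
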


\begin{proof}
Let $p$ be a prime for which $V/\F_p$ is smooth along $W/\F_p$ (this excludes
only finitely many primes).  Then
the $\F_p$-points of $V \setminus W$ are in 
bijection with points of $\tilde V \setminus E$, while $E$ maps to $W$ with
fibres that are projective spaces of dimension $c-1$ over $\F_p$, where $c$ is 
the codimension of $W$ in $V$.  Thus $\tilde V$ has 
$(p^{d-1}+p^{d-2}+\dots+p)[W]_p$ more $\F_p$-points than $V$.
\end{proof}

%% More generally, if
%% $V_1 \to V_2$ is surjective on points and the fibres are prime-trivial, 
%% then $V_1$ and $V_2$ are prime-similar.

\begin{example}[{\cite[Lemma 16]{k3-phi4}}]
Let $V$ be a hypersurface in $\P^n$ ($n > 1$) defined by a polynomial
$fx_0 + g$ of degree $1$ in $x_0$.  Then $V$ is prime-similar to the
subvariety $W$ of $\P^{n-1}$ defined by $f$.  To see this, note first that
$P_0 = (1:0:\ldots:0) \in V$ (here we use that $n > 1$) and consider the 
rational map from $\P^n$ to $\P^{n-1}$ given by projecting away from 
$P_0$.  Above every point of $\P^{n-1} \setminus W$ there is one
point of $V$.  Above every point of $W$ the number of points of $V$ not equal
to $P_0$ is $p$ if $g = 0$ there and otherwise $0$.  Thus, if $W$ has $k$ 
points, then the number of points of $V$ is congruent mod $p$ to 
$$(p^{n-1}+p^{n-2}+\dots+1-k)+1 \equiv 2-k \pmod p,$$ as desired.

We refer to this as {\em linear reduction}.
\end{example}

\begin{example}[{\cite[Lemma 27]{k3-phi4}}]
Let $V$ be a hypersurface in $\P^n$ defined by $fg = 0$, where
$f, g$ are both of degree $1$ in the variable $x_0$ and $fg$ is of total degree
$n+1$.  Then $V$ is prime-similar to the subvariety of $W$ defined by
$[f,g]_{x_0}$, the resultant of $f, g$ with respect to $x_0$.  We call this
{\em resultant reduction}.
\end{example}

\begin{remark} Both of these reductions preserve the property of the total
degree of a polynomial being equal to the number of variables, as well as that
of the degree in any one variable being at most $2$.
\end{remark}

Now we will reinterpret linear reduction in
terms of a standard geometric construction: this will show us how to
generalize it.
The variety $fx_0+g = 0$ in $\P^n$ is singular
at $(1:0:\ldots:0)$.  If we blow up this point, the exceptional divisor is
defined by $f = 0$.  On the other hand, the fibres of the projection to 
$\P^{n-1}$ are points or lines, so the blowup is prime-trivial and
the original variety is prime-similar to the exceptional divisor.
More generally, we consider varieties singular along a coordinate plane
rather than at a point.

%% The normal reduction was defined in these terms in the first place, by blowing
%% up the linear subspace of codimension $2$ to obtain a prime-trivial variety
%% and replacing the original variety by the exceptional divisor.
%% Another example is considered in section \ref{w3-lev8} below.  We summarize 
%% these results in a general method of reduction.

\begin{defn}
Let $V$ be a hypersurface of degree $n+1$ in $\P^n$, defined by $f = 0$.
Suppose that, for some $k$ with $2 \le k \le n$ and
$0 \le i_1 < i_2 < \dots < i_k \le n$,
we have $f \in (x_{i_1},x_{i_2}, \dots, x_{i_k})^k$.
We then define the {\em subspace reduction} $L(V,k) = L(V,\{i_1,\dots,i_k\})$
of $V$ by the $x_{i_j}$ as the exceptional divisor in the blowup of
$V$ along $x_{i_1} = \ldots = x_{i_k} = 0$.
It is naturally a hypersurface of bidegree
$(n-k+1,k)$ in $\P^{n-k} \times \P^{k-1}$.
\end{defn}

%% Of course the number of $\F_p$-points is not changed by permuting
%% the variables, so we may apply subspace reduction to any set of variables, as
%% long as every monomial satisfies the degree condition.  The restriction to
%% $x_0,\ldots,x_{k-1}$ is made only to simplify the notation.

%% \begin{remark}
%% The general fibre of $L(V,k)$ over $\P^{k-1}, \P^{n-k}$ is of
%% degree $n-k+1, k$ respectively.
%% \end{remark}

\begin{prop}\label{lin-sim}
The subspace reduction of $V$ is prime-similar to $V$.
\end{prop}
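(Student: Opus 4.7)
The plan is to realize $L(V,k)$ as the exceptional divisor of the blowup $\pi: \tilde V \to V$ of $V$ along the linear subspace $L = \{x_{i_1} = \cdots = x_{i_k} = 0\} \cong \P^{n-k}$, and deduce the proposition from the fact that the ambient $Bl_L \P^n$ fibres over $\P^{k-1}$ in a way that makes $\tilde V$ prime-trivial by Chevalley--Warning. This parallels, and generalizes, the informal geometric picture of linear reduction given just above the proposition.

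First, note that the hypothesis $f \in (x_{i_1},\dots,x_{i_k})^k$ forces $L \subset V$, so $\pi$ is bijective on $\F_p$-points away from $L$ and its exceptional locus is, by definition, the subspace reduction $L(V,k)$. Counting accordingly,
$$[\tilde V]_p = [V]_p - [L]_p + [L(V,k)]_p,$$
and since $[L]_p = [\P^{n-k}]_p \equiv 1 \pmod p$, the desired prime-similarity $[V]_p + [L(V,k)]_p \equiv 2 \pmod p$ (the form it takes here, since $\dim L(V,k) = \dim V - 1$) is equivalent to the single assertion $[\tilde V]_p \equiv 1 \pmod p$.

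To prove this, I would project $Bl_L \P^n$ onto $\P^{k-1}$ away from $L$: the fibre $\Lambda_y$ over $y = (a_1:\cdots:a_k)$ is the linear $\P^{n-k+1} \subset \P^n$ cut out by the $k-1$ relations $a_\ell x_{i_j} = a_j x_{i_\ell}$. Using $t$ and the $x_j$ for $j \notin \{i_1,\dots,i_k\}$ as homogeneous coordinates on $\Lambda_y$ and substituting $x_{i_j} = t a_j$, one finds $f|_{\Lambda_y} = t^k g_y$ with $g_y$ homogeneous of degree $n+1-k$; the factor $t^k$ records the order of vanishing of $V$ along $L \subset \Lambda_y$, and the strict transform $\tilde V \cap \Lambda_y$ is exactly $\{g_y = 0\}$.

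Finally, since $\deg g_y = n+1-k$ is strictly less than the number $n-k+2$ of coordinates on $\Lambda_y$, Chevalley--Warning (as applied in the example after the definition of prime-triviality) gives $[\tilde V \cap \Lambda_y]_p \equiv 1 \pmod p$. Summing over $y \in \P^{k-1}(\F_p)$ then yields $[\tilde V]_p \equiv [\P^{k-1}]_p \equiv 1 \pmod p$, completing the argument. The one place that requires care is the factorization $f|_{\Lambda_y} = t^k g_y$ of the expected degree: it is a mechanical consequence of the ideal-theoretic hypothesis $f \in (x_{i_1},\dots,x_{i_k})^k$, and no smoothness of $V$ along $L$ is needed, because only set-theoretic point counts enter.
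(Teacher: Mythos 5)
Your proposal takes essentially the same route as the paper: reduce to showing the blowup $\tilde V$ is prime-trivial, fibre it over $\P^{k-1}$ via the projection $(x_{i_1}:\cdots:x_{i_k})$, and apply Chevalley--Warning to each linear fibre $\Lambda_y \cong \P^{n-k+1}$. One small imprecision: the fibre of the strict transform over $y$ is $\{f|_{\Lambda_y}/t^{m(y)}=0\}$ where $m(y)\ge k$ is the exact order of vanishing along $t=0$, and for special $y$ one can have $m(y)>k$, so it is not always $\{g_y=0\}$ with $g_y=f|_{\Lambda_y}/t^k$; the paper is careful to say ``divide out the highest power,'' and also treats the degenerate fibre $f|_{\Lambda_y}\equiv 0$ as a separate case. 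Neither point damages your argument, since dividing out more only lowers the degree and a full linear fibre is still prime-trivial, so the Chevalley--Warning conclusion $[\tilde V]_p\equiv 1\bmod p$ survives either way.
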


\begin{proof}
Let $\tilde V$ be the blowup.  Since 
$[\tilde V]_p = [V_p] - [x_0,\ldots,x_{k-1}]_p + [L(V,k)]_p$, it suffices to prove
that $\tilde V$ is prime-trivial.  To do this, we will show that
every fibre of the projection $\pi: \tilde V \to \P^{k-1}$ is prime-trivial.
Since $\P^{k-1}$ is prime-trivial, this implies that $[\tilde V]_p$
is a sum of $c$ 
integers congruent to $1 \bmod p$, where $c \equiv 1 \bmod p$.  It follows that
$[\tilde V]_p \equiv 1 \bmod p$, i.e., that $\tilde V$ is prime-trivial.

Let $\rho$ be the rational map $\P^n \to \P^{k-1}$ defined by
$(x_0:x_1:\ldots:x_{k-1})$.
Let $P = (c_0:c_1:\dots:c_{k-1})$ 
be a point of $\P^{k-1}$.  Then $\pi^{-1}(P)$ (viewed naturally
inside $\P^n$ identified with $\P^n \times P$) is equal to $\rho^{-1}(P) \cap V$
with the component supported on $x_0 = \ldots = x_{k-1}$ removed.  There are
two possibilities.

\begin{enumerate}
\item If $\rho^{-1}(P) \cap V = \rho^{-1}(P)$, then it is a linear subspace of
codimension $k-1$, which is certainly prime-trivial.
\item If not, consider the subscheme of $V$ defined by the $k-1$ linear
equations that define $P$.  If $c_i \ne 0$, then on this subscheme all of the
$x_0, \dots, x_{k-1}$ are constant multiples of $x_i$.  When we substitute for
them in the polynomial defining $V$, the result is a multiple of $x_i^k$.
This corresponds to the component supported on $x_0 = \ldots = x_{k-1}$,
so we divide out the highest power of $x_i^k$ to obtain a polynomial in
$x_k,\dots,x_n$ defining the fibre of $\pi$.  This polynomial in $n-k+1$
variables has degree at most $n-k$, so the hypothesis of the Chevalley-Warning
theorem holds and the fibre is prime-trivial.
\end{enumerate}
\end{proof}

\begin{remark}
This proposition is our justification for considering graph hypersurfaces
and their reductions as projective rather than
affine varieties.  The blowup of a projective variety is projective but that
of an affine variety is not affine, so this construction is more natural for
projective varieties.

Note also that this proposition does not contradict Lemma \ref{triv-blowup},
because $V$ is singular along $x_0 = \ldots = x_{k-1} = 0$.
\end{remark}

We now describe the case $k = 2$ in more detail.
Let $V$ be defined by $f = 0$ with $f \in (x_0,x_1)^2$.  Then $L(V,2)$
is a prime-similar variety in $\P^{n-2} \times \P^1$.
Define $c_0, c_1, c_2$ to be polynomials that satisfy 
$f = c_0 x_0^2 + c_1 x_0 x_1 + c_2 x_1^2$ (any such 
representation will do, but for uniqueness one may choose $c_0$ respectively
$c_2$ not to depend on $x_1$ resp.{} $x_0$).
The fibre above $P \in V(f)$ is a single point if $x_0, x_1$ are not both $0$;
otherwise it consists of all projective points $(y_0:y_1)$ satisfying
$c_0 y_0^2 + c_1 y_0 y_1 + c_2 y_1^2 = 0$.

\begin{prop}\label{to-hyp}
Suppose that $c_1^2 - 4c_0 c_2 = gh$, where $\deg g = n-2$.
Then $L(V,2)$ is birationally equivalent and prime-similar
to a hypersurface $H_n$ of degree $n$ in $\P^{n-1}$.
\end{prop}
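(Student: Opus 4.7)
The plan is to construct an explicit birational map from $L(V,2)$ to a hypersurface $H_n$ of degree $n$ in $\P^{n-1}$ by completing the square in the conic bundle equation, and then to verify prime-similarity by a direct fibrewise point count over $\P^{n-2}$.

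Recall that $L(V,2) \subset \P^{n-2} \times \P^1$ is cut out by $c_0 y_0^2 + c_1 y_0 y_1 + c_2 y_1^2 = 0$, with $c_i \in \Q[x_2,\dots,x_n]$ homogeneous of degree $n-1$. Multiplying by $4c_0$ and completing the square rewrites the defining equation on $L(V,2)$ as
\[ (2c_0 y_0 + c_1 y_1)^2 = (c_1^2 - 4c_0 c_2)\, y_1^2 = g h\, y_1^2. \]
The substitution $2c_0 y_0 + c_1 y_1 = g y_1 t$ then yields $g^2 y_1^2 t^2 = g h y_1^2$, that is, $g t^2 = h$, which I take as the defining equation of $H_n$ in $\P^{n-1}$ with coordinates $(t : x_2 : \cdots : x_n)$. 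Since $\deg g = n-2$, $\deg h = n$, and $t$ has weight $1$, both $gt^2$ and $h$ are homogeneous of degree $n$, so $H_n$ is a hypersurface of degree $n$ as claimed. The mutually inverse rational maps $\bigl((x_2{:}\cdots{:}x_n),(y_0{:}y_1)\bigr) \mapsto \bigl((2c_0 y_0 + c_1 y_1) : g y_1 x_2 : \cdots : g y_1 x_n\bigr)$ and $(t:x_2:\cdots:x_n) \mapsto \bigl((x_2{:}\cdots{:}x_n), (gt - c_1 : 2c_0)\bigr)$ give the birational equivalence on the dense open locus where $c_0$, $g$, and $y_1$ are all nonzero.

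For prime-similarity I would count $\F_p$-points fibrewise over $\P^{n-2}$. On the $L(V,2)$ side, a nondegenerate binary quadratic form on $\P^1$ with discriminant $D$ has $1 + \chi(D)$ zeros (with $\chi$ the Legendre symbol, $\chi(0)=0$); the finitely many fibres where all three $c_i(P)$ vanish contribute $p+1 \equiv 1 \pmod p$ instead of $1$, so summing and using $[\P^{n-2}]_p \equiv 1 \pmod p$ gives
\[ [L(V,2)]_p \equiv 1 + \sum_{P \in \P^{n-2}(\F_p)} \chi\bigl(g(P)h(P)\bigr) \pmod p. \]
For $H_n$, I project $\P^{n-1} \dashrightarrow \P^{n-2}$ by forgetting $t$; the indeterminacy point $(1{:}0{:}\cdots{:}0)$ lies on $H_n$ because $g$ and $h$ vanish at the origin. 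Away from this point the affine fibre over $P$ is $\{t \in \F_p : g(P)t^2 = h(P)\}$, which has $1 + \chi(g(P)h(P))$ points when $g(P) \ne 0$, no points when $g(P) = 0 \ne h(P)$, and $p \equiv 0 \pmod p$ points when $g(P) = h(P) = 0$. The residual discrepancy on $V(g) \subset \P^{n-2}$ is then absorbed modulo $p$ by Chevalley-Warning prime-triviality of $V(g)$ (a hypersurface of degree $n-2$ in $\P^{n-2}$) together with the one extra point at infinity, yielding $[H_n]_p \equiv 1 + \sum_P \chi(g(P)h(P)) \pmod p$. Since $\dim L(V,2) = \dim H_n = n-2$, equality of the two counts modulo $p$ is precisely prime-similarity.

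The main obstacle will be the careful bookkeeping of the contributions at the degenerate loci: the simultaneous vanishing of the $c_i(P)$, the locus $V(g)$, and the extra point $(1{:}0{:}\cdots{:}0) \in H_n$. One must verify that the indeterminacy loci of the two rational maps are contained in these subvarieties, so that the birational identification is consistent with the fibrewise identity, and that the Legendre-symbol sum really does coincide on the two sides after all corrections are accounted for.
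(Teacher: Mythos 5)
Your proof is correct and follows essentially the same route as the paper: complete the square in the conic-bundle equation over $\P^{n-2}$, convert to the hypersurface $gt^2 = h$, and verify prime-similarity by a fibrewise Legendre-symbol count with Chevalley--Warning applied to the locus $g=0$. The only cosmetic difference is that the paper inserts an intermediate weighted-projective hypersurface $V_N \subset \P(n-1,1,\dots,1)$ defined by $t^2 + c_1't + c_0'c_2' = 0$ and passes through it in two steps, whereas you go directly from $L(V,2)$ to $H_n$.
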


\begin{proof} 
Let $W = \P(n-1,1,1\dots,1)$ be a weighted projective space of dimension $n-1$.
Let the $c_i'$ be obtained from the $c_i$ by evaluating them at 
$(0,0,x_0,\dots,x_{n-2})$, where the $x_i$ are now the weight-1 coordinates of 
$W$.  Observe that the degree of the $c_i'$ is $n-1$, by our hypotheses on the 
monomials making up $P$.  We show that $L(V,2)$ is birationally
equivalent and prime-similar to the subvariety $V_N$ of $W$ defined by 
$t^2 + c_1' t + c_0' c_2' = 0$.  

Indeed, let us fix a point in $\P^{n-2}$ and consider the points above it in
each variety.  In $V_N$, they correspond to roots of $t^2 + c_1 t + c_0 c_2$;
in $L(V,2)$, to points satisfying
$c_0 y_0^2 + c_1 y_0 y_1 + c_2 y_1^2$.  These are in bijection unless 
$c_0 = c_1 = c_2$, in which case there is $1$ of the former and $p+1$ of the 
latter, which numbers are congruent mod $p$.  Finally, the point
$(1:0:0:\ldots:0)$ is not on $V_N$.

To go from $V_N$ to the desired hypersurface,
we complete the square in $t$, which can only affect the
number of $\F_p$-points if $p=2$.  This gives us $t^2 - gh = 0$ in $W$, which we
will still call $V_N$.  Then $H_n$ is defined by $v_0^2g - h = 0$.  There is
an obvious invertible rational map $H_n \cdots \to V_N$
taking $(v_0:\ldots:v_{n-1})$ to
$(v_0g:v_1:\ldots:v_{n-1})$.  This map is defined everywhere except at 
$(1:0:\ldots:0)$, which is blown up to $v_0 = g = 0$.
The inverse map can be given by two sets of equations
$$(v_0:gv_1:\ldots:gv_{n-1}),\quad (h:v_0v_1:\ldots:v_0v_{n-1})$$ and has base 
locus $v_0 = g = h = 0$.  So the points of $H_n$ consist of $(1:0:\ldots:0)$,
one point for every point of $V_N$ outside $v_0 = g = 0$, and either 
$0$ or $p$ points for every point of $v_0 = g = 0$.
Applying the Chevalley-Warning theorem to $v_0 = g = 0$ we see that the 
numbers of points on the two varieties are congruent mod $p$.
\end{proof}

\begin{remark} This construction along with several related ones is given 
in \cite[Section 1.7.2]{meyer-diss} for double covers of $\P^3$ branched along
the union of eight planes, but the general case was known earlier.
The proposition above is often, though not always,
applicable to the subspace reduction along a line of a variety
arising from a graph polynomial.
%% ; unfortunately the reason for this is not clear.
\end{remark}

\begin{defn} When $k = 2$ we refer to the subspace reduction as the
{\em normal reduction}, because the blowup can be interpreted as the 
normalization of $V$ along the divisor $x_0 = x_1 = 0$.
\end{defn}

Similarly, in the case $k = n-1$, we obtain a hypersurface
in $\P^1 \times \P^{n-2}$ that can be viewed as a double cover of $\P^{n-2}$,
and if the branch locus is suitably reducible we can revert to a hypersurface
of degree $n$ in $\P^{n-1}$: for some examples of this see sections
\ref{w3-lev8}, \ref{w4-lev5}, \ref{w4-lev6}.  If neither $k-1$ nor $n-k$ is
$0$ or $1$, it is not clear how to continue the reduction process.
%, but that case does not arise in this paper.

%% There are some examples where the branch locus is not suitable.  For example,
%% the hypersurface of the graph labeled $i_{53}$ in Figure 6 of \cite{mf-qft} 
%% can be reduced to a double cover of $\P^4$ branched along a union of divisors
%% of degree $1,2,7$ or $2,8$, but Proposition \ref{to-hyp} does not apply to 
%% these varieties.
%% \end{remark}

The resultant reduction cannot immediately be described in terms of the subspace
reduction, unless one of the factors is linear, but it is certainly related.
Consider the variety
in $\P^n$ defined by $(f_1x_0 + f_0)(g_1x_0+g_0) = 0$, where $f_i, g_i$ do
not depend on $x$ and $f_1, g_1$ have no common factor.  
It is prime-similar to the
variety defined by $f_1x_0 + f_0 = g_1x_0 + g_0$ (even this can be seen by
blowing up along the intersection).  On this variety, we
again blow up $(1:0:\ldots:0)$, obtaining an exceptional divisor defined by
$f_1 = g_1 = 0$.  If $\deg(f_1x_0 + f_0)(g_1x_0+g_0) = n+1$, then 
$\deg f_1 + \deg g_1 = n-1$, so the exceptional divisor is subject to the
Chevalley-Warning theorem.  The projection from the blowup to $\P^{n-1}$
again has linear fibres and its image is defined by the elimination ideal 
for $x_1,\dots,x_n$
of the ideal $(f_1x_0 + f_0,g_1x_0 + g_0)$, which (since these have no
common factor) is generated by the resultant $f_1g_0 - f_0g_1$.

\section{The modular form of weight $4$ and level $13$}\label{sec:4-13}
In this section we will describe a rigid Calabi-Yau threefold for which the
representation on $H^3$ gives the unique newform of level $4$ and weight
$13$ with rational eigenvalues.
By considering special elements of families containing this threefold
we will also realize newforms of level $78$ and, apparently, $390$.

We start from the graph labeled $(4,13)$ in \cite[Figure 5]{mf-qft}.
This is $\#243$ in the list of $4$-regular graphs on $11$
vertices produced by {\tt genreg} \cite{genreg}, whose numbering of the vertices
we use in place of that of \cite{mf-qft}.  By applying the reductions of
section \ref{sec:red} to its graph polynomial 
we will produce the desired threefold.

\begin{myempty}\label{wt-4-13}
\end{myempty}
\begin{figure}[ht]
\begin{center}
\begin{tikzpicture}
\coordinate[label=right:$1$] (v1) at (0:1); % {$1$};
\coordinate[label=right:$2$] (v2) at (2*pi/11 r:1); % {$2$};
\coordinate[label=above right:$3$] (v3) at (4*pi/11 r:1); % {$3$};
\coordinate[label=above:$4$] (v4) at (6*pi/11 r:1); % {$4$};
\coordinate[label=above left:$5$] (v5) at (8*pi/11 r:1); % {$5$};
\coordinate[label=left:$6$] (v6) at (10*pi/11 r:1); % {$6$};
\coordinate[label=left:$7$] (v7) at (12*pi/11 r:1); % {$7$};
\coordinate[label=below left:$8$] (v8) at (14*pi/11 r:1); % {$8$};
\coordinate[label=below:$9$] (v9) at (16*pi/11 r:1); % {$9$};
\coordinate[label=below right:$10$] (v10) at (18*pi/11 r:1); % {$10$};
\coordinate[label=right:$11$] (v11) at (20*pi/11 r:1); % {$11$};
\draw (v1) -- (v3);
\draw (v1) -- (v4);
\draw (v1) -- (v5);
\draw (v2) -- (v3);
\draw (v2) -- (v6);
\draw (v2) -- (v7);
\draw (v3) -- (v8);
\draw (v3) -- (v9);
\draw (v4) -- (v5);
\draw (v4) -- (v6);
\draw (v4) -- (v8);
\draw (v5) -- (v10);
\draw (v5) -- (v11);
\draw (v6) -- (v7);
\draw (v6) -- (v10);
\draw (v7) -- (v9);
\draw (v7) -- (v11);
\draw (v8) -- (v10);
\draw (v8) -- (v11);
\draw (v9) -- (v10);
\draw (v9) -- (v11);
\fill (v1) circle (2pt) (v2) circle (2pt) (v3) circle (2pt) (v4) circle (2pt) (v5) circle (2pt) (v6) circle (2pt) (v7) circle (2pt) (v8) circle (2pt) (v9) circle (2pt) (v10) circle (2pt) (v11) circle (2pt);
%% \node[below=1cm,align=flush center,text width=10cm] at (v9)
%% {{\sc Figure 1.}  A graph for which the $c_p$ are congruent mod $p$ to the
%% eigenvalues of the newform of weight $4$ and level $13$.};
\end{tikzpicture}
\end{center}
\caption{A graph for which the $c_p$ are congruent mod $p$ to the
eigenvalues of the newform of weight $4$ and level $13$.}
\end{figure}
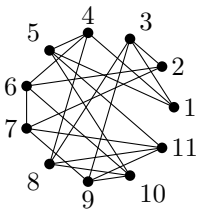

We first delete vertex $1$, then take the $5$-invariant
for the edges $$(2,3),(2,6),(2,7),(3,9),(6,7),$$
and then we apply linear or resultant reduction to the edges
$$(3,8),(4,6),(5,10),(4,5),(4,8),(5,11),(9,11)$$ in that order.
This done, we have a polynomial of degree $6$ in $6$ variables to which we can
apply normal reduction, since it defines a subvariety of $\P^5$ singular
along the $3$-plane $x_1 = x_2 = 0$.
Next we use Proposition \ref{to-hyp} to obtain a quintic $Q_1$ in $\P^4$.
It is defined by the polynomial
$$
\begin{aligned}
& -x_0^2x_1x_2x_3/4 + x_1x_2^3x_3 - x_0^2x_1x_3^2/4 + x_1x_2^2x_3^2 -
   4x_1^2x_2^2x_4 - 4x_1x_2^3x_4 \\
&\qquad - x_0^2x_1x_3x_4/4 - x_0^2x_2x_3x_4/4 - 4x_1^2x_2x_3x_4 - 3x_1x_2^2x_3x_4 + 
   x_2^3x_3x_4 \\
&\qquad - x_0^2x_3^2x_4/4 +  x_2^2x_3^2x_4 - 4x_1^2x_2x_4^2 - 4x_1x_2^2x_4^2 - 4x_1^2x_3x_4^2 - 4x_1x_2x_3x_4^2\\
\end{aligned}
$$
and we know that $[Q_1]_p \equiv [H]_p/p^2 \bmod p$, where $H$ is the
hypersurface defined by the graph polynomial of $G$.
%% Using a computer we immediately confirm that $[Q_1]_p + a_p \equiv 1 \bmod p$
%% for odd primes $< 100$, where the $a_p$ are the coefficients of the unique
%% newform of weight $4$ and level $13$.

\begin{remark}
We expect $Q_1$ to be birational to a rigid Calabi-Yau threefold,
So, in view of \cite[Corollary 1.14]{reid}, it is unsurprising
that the codimension-$2$ components
of the singular locus are lines of compound du~Val (cdV) singularities.
Of these, $10$ 
are of type $A_1$ and $2$ of type $A_3$.  Some of the codimension-$3$ 
components are embedded points where these specialize to compound $A_2$ and 
$A_4$ singularities.  However, the singularities where the components meet are 
not easy to deal with directly.
\end{remark}

The most difficult singular points are those in the support of components
of the singular subscheme whose multiplicity is large.
We would like to blow these up; however, it is impractical to work in the
product of projective spaces that would result.
To improve our model, we will use a map defined by the linear system of
quadrics vanishing along the reduced subschemes of certain components of the
singular subscheme.
Namely, consider the map $\P^4 \to \P^4$
defined by $$(x_i) \to (x_0(x_2+x_3):x_1x_2:x_1x_3:x_1x_4:x_2(x_2+x_3)).$$
%% (This was found by considering linear systems of quadrics that vanish at
%% the worst of the singular points and at several others in such a way as to
%% define a map to $\P^4$ with quintic image.)  
A computation in Magma tells us
that the map has degree $1$ on $Q_1$ and that the image $Q_2$ is defined by
$$\begin{aligned}
& x_0^2x_1^2x_2 + x_0^2x_1x_2^2 + x_0^2x_1x_2x_3 + 16x_1^3x_3x_4 + 
    x_0^2x_2x_3x_4 + 32x_1^2x_2x_3x_4 \cr
&\quad + 16x_1x_2^2x_3x_4 + 16x_1^2x_3^2x_4 + 32x_1x_2x_3^2x_4 + 16x_2^2x_3^2x_4 - 
    4x_1^2x_2x_4^2 - 4x_1x_2^2x_4^2 \cr
&\quad + 16x_1^2x_3x_4^2  + 12x_1x_2x_3x_4^2 + 16x_1x_3^2x_4^2 + 16x_2x_3^2x_4^2 - 
    4x_2x_3x_4^3.\end{aligned}$$
The discriminant of the equation defining $Q_1$ with respect to $x_1$
factors into polynomials of degrees $5,2,1$.  For $Q_2$, on the other hand,
the factors have degrees $4,2,1,1$, indicating that we have made progress.

Continuing in the same way, we map $\P^4 \to \P^4$ by
$$(x_i) \to ((x_0-2x_4)(x_1+x_2):x_1(x_1+x_2):x_1x_3:x_2x_3:x_3x_4).$$
Again the map is of degree $1$ on $Q_2$, and the image $Q_3$ is defined by
$$\begin{aligned}
&x_0^2x_1x_2x_3 + x_0^2x_2^2x_3 + 16x_1^2x_2^2x_4 + 16x_1x_2^3x_4 + 
    4x_0x_1^2x_3x_4 \\
&\quad + x_0^2x_2x_3x_4 + 4x_0x_1x_2x_3x_4 + 
    32x_1^2x_2x_3x_4 + 48x_1x_2^2x_3x_4 + 16x_1^2x_3^2x_4 \\
&\quad +  48x_1x_2x_3^2x_4 + 16x_1x_3^3x_4 + 16x_1^2x_2x_4^2 + 
    16x_1x_2^2x_4^2 + 4x_0x_1x_3x_4^2 \\
&\quad +  16x_1^2x_3x_4^2 + 
    32x_1x_2x_3x_4^2 + 16x_1x_3^2x_4^2,\\
\end{aligned}
$$
whose discriminant $D_0$ with respect to $x_0$ factors into polynomials
of degrees $4,1,1,1,1$.  Using Proposition \ref{to-hyp} in reverse, we find
that the hypersurface $H_3$ in $\P(4,1,1,1,1)$ defined by $t^2 = D_0$ is
birationally equivalent to $Q_3$.
Let $S_1, \dots, S_5$ be the components of the branch locus of the map
$H_3 \to \P^3$, where
$S_5$ is the quartic.  One of the $S_i$, say $S_4$, meets $S_5$ in four
lines; the other three meet it in three lines of which one is double.
The three lines are given by $x_1+x_2 = x_i = 0$ for $i = 0, 1, 3$.

\begin{remark}\label{k3-fib}
Write the equation of $H_3$ in the form $t^2 = q_1q_2$, where
$q_1, q_2$ are polynomials of degree $4$.  Then the general fibre of the map
$H_3 \to \P^1$ given by the two sets of equations $(t:q_1), (q_2:t)$
is the union of two K3 surfaces, so by passing to a double cover of the base
we represent a variety birational to
$H_3$ as a family of K3 surfaces with base $\P^1$.
\end{remark}

\begin{defn}\label{bl-p3}
Map $\P^3 \to \P^3 \times ({\P^1})^3$
by the identity on the first component and the
projections away from the three lines
$L_i: x_1+x_2 = x_i = 0$ mentioned above, and let
$B$ be the image.  Under the inverse map $B \to \P^3$, the 
points $L_i \cap L_j$ pull back to divisors, which we will denote $D_{ij}$.
The inverse image of $L_i$ then consists of $D_{ij}, D_{ik}$, and a third 
divisor, to be referred to as $D_i$.
\end{defn}

\begin{remark} For $i \ne j$ the three divisors $D_i, D_{ij}, D_j$ meet in an
ordinary double point, and these three points are the only singularities
of $B$.  There is an algebraic small
resolution of $B$, because the image in $B$ of the plane $x_1+x_2 = 0$ that
contains the three lines is smooth at the double points
(we use the criterion of \cite[Theorem 1.8]{meyer-diss}).
Since $B$ is a complete intersection defined by three equations of
multidegree $(1,1,0,0), (1,0,1,0), (1,0,0,1)$, 
the adjunction formula gives its canonical class as $(-1,-1,-1,-1)$.
\end{remark}

\begin{defn} Let the $R_i$ be the images in $B$ of the $S_i$.  Let $D_B$ be the
fibre product $H_3 \times_{\P^3} B$.  It is a double cover of $B$ branched
along the union of the $R_i$.
\end{defn}

The following statements would be challenging to prove by hand, but are
easy with the help of a computer.

\begin{prop}
The $R_i$ are surfaces of which no three intersect in a curve.  
Except for
$R_5$ which has three ordinary double points, they are smooth.  They do not
contain any of the singularities of $B$.  Their union is defined in $B$ by
a single equation of multidegree $(2,2,2,2)$.  Finally, $R_4$ is a blowup
of $\P^2$ in three points, while $R_1, R_2, R_3$ are blowups of $\P^2$ in two
points. \qed
\end{prop}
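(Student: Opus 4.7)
My plan is to reduce each assertion to a polynomial-ideal computation in the ambient variety $\P^3 \times (\P^1)^3 \supset B$, dispatched by computer algebra. First I would write down defining equations for the five components $S_1,\ldots,S_5$ of the discriminant $D_0$ (four linear and one quartic). The strict transform $R_i \subset B$ is then the saturation of the pullback ideal of $S_i$ by the ideals cutting out the exceptional divisors $D_j$ and $D_{jk}$ of Definition~\ref{bl-p3}. This produces explicit multihomogeneous defining equations for each $R_i$ on which all further checks can be performed.

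Most of the assertions are then routine Jacobian or dimension computations. Smoothness of $R_1,\ldots,R_4$ and the identification of the singular locus of $R_5$ with three ordinary double points follow from the Jacobian criterion applied on a standard affine cover of $B$, together with a tangent-cone computation at each of the candidate singular points of $R_5$ to confirm the rank-$3$ quadric condition. That the $R_i$ avoid the three nodes of $B$ is a direct substitution of the node coordinates into the defining equations. To see that no three $R_i$ share a curve, one forms $R_i+R_j+R_k$ for every triple, saturates away the exceptional locus, and verifies that the resulting ideal cuts out a scheme of dimension at most zero. The multidegree claim is numerical: $\deg(\bigcup S_i)=8$ in $\P^3$, and if $H$ denotes the pullback of the hyperplane class and $F_j$ the fibre class of the $j$-th blowup (so that $E_{L_j}=H-F_j$), then the multidegree $(2,2,2,2)$ is equivalent to $\bigcup S_i$ having multiplicity exactly $2$ along each $L_j$, which is a purely local computation on $\P^3$.

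The step with real subtlety is the birational classification of $R_1,\ldots,R_4$. Each $S_i$ with $i \le 4$ is already a copy of $\P^2$, and $R_i$ is obtained from it by blowing up the finite scheme $S_i \cap (L_0 \cup L_1 \cup L_3)$ and contracting any ruled component lying over a line $L_j$ that happens to be contained in $S_i$. A direct intersection calculation should show that $S_4$ meets each of the three lines in a single reduced point, giving $R_4$ as $\P^2$ blown up in three points, while each of $S_1, S_2, S_3$ contains exactly one of the $L_j$ and meets the other two transversally, yielding $\P^2$ blown up in two points after the contraction. The hardest part, and the one most easily mishandled by hand, is the bookkeeping: one must correctly identify which $L_j$ is contained in each $S_i$, which contributes an exceptional divisor to $R_i$, and how the picture interacts with the small resolution of the nodes of $B$. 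Here the assistance of Magma is essential.
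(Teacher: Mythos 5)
Your proposal takes essentially the same route as the paper, which records this proposition as the output of a Magma computation (hence the bare \verb|\qed|), and your description of the necessary ideal-theoretic checks — saturating the pullback ideals by the exceptional locus, applying the Jacobian criterion and a tangent-cone computation at the candidate nodes of $R_5$, dimension checks on triple sums of ideals, direct substitution of the three node coordinates of $B$, and the multidegree count via $E_{L_j} = H - F_j$ — is a faithful account of what such a computation must do.

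One small inaccuracy worth noting: in the final paragraph, no contraction is ever required. If a plane $S_i$ contains one of the lines $L_j$, then the restriction of the projection $\pi_j$ to $S_i$ is a \emph{constant} map to $\P^1$ (both linear forms cutting out $L_j$ restrict to proportional forms on $S_i$), so the graph closure is simply $S_i \times \{\text{pt}\}$ and $R_i \to S_i$ is an isomorphism over the factor $j$; there is no ruled component to contract. The rest of your bookkeeping — that $S_4$ meets each $L_j$ in a single reduced point and hence $R_4$ is $\P^2$ blown up in three points, while each of $S_1, S_2, S_3$ contains exactly one $L_j$ and meets the other two transversally, giving two-point blowups — is correct and is precisely the content that the computer check confirms, once one also verifies (as you note) that no $S_i$ passes through a pairwise intersection $L_j \cap L_k$, so that the nodes of $B$ do not interfere.
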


\begin{prop}\label{twenty-eight}
  There are $28$ smooth rational curves $C_i$ along which two of the $R_i$
  intersect, and all of 
their intersections are transverse.  No three of the $R_i$ intersect
in a curve, but there are $7$ points where three $R_i$ meet and $3$ points
where four $R_i$ meet.  These points are the intersections of three or six
$C_i$ respectively.  These $28$ curves include $19$ that map to lines in $\P^3$
and $9$ that map to points. \qed
\end{prop}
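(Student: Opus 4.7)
The plan is to verify all the claims by explicit scheme-theoretic computations in Magma, using the descriptions of $B$ and the $R_i$ already available. First, I would obtain explicit equations for each $R_i$ as a subscheme of $B \subset \P^3 \times (\P^1)^3$ by computing the proper transforms of the $S_i$ under the birational map $\P^3 \dashrightarrow B$. Because the singularities of $B$ are disjoint from the $R_i$ (guaranteed by the previous proposition), every intersection calculation can be carried out on the smooth locus of $B$, so ordinary Jacobian criteria and transversality tests are legitimate.

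Next, for each of the ten unordered pairs $\{i,j\}\subset\{1,\dots,5\}$ I would compute the saturated ideal of $R_i\cap R_j$, decompose it into irreducible components, and for each component $C$ verify (i) smoothness by the Jacobian criterion, (ii) arithmetic genus $0$ from its Hilbert polynomial (hence rationality, since $C$ is smooth), and (iii) transversality of $R_i$ and $R_j$ along $C$ by showing the two local defining equations form a regular sequence at a generic point of $C$. Summing the numbers of components found across all ten pairs must yield the claimed total of $28$.

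Then I would compute the triple intersections $R_i\cap R_j\cap R_k$ and quadruple intersections $R_i\cap R_j\cap R_k\cap R_\ell$ for all choices of indices. Checking that each is zero-dimensional establishes that no three $R_i$ meet along a curve; the resulting supports, combined with the list of curves $C_i$ obtained in the previous step, let one simply read off the $7$ points lying on three $R_i$ (equivalently on three $C_i$) and the $3$ points lying on four $R_i$ (equivalently on six $C_i$, since $\binom{4}{2}=6$). The coincidence of these incidence counts can be confirmed component-by-component using the ideals already computed.

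Finally, to separate the $19$ curves mapping to lines in $\P^3$ from the $9$ mapping to points, I would push each $C_i$ forward along the projection $\pi: B \to \P^3$: a curve maps to a point precisely when it is contained in one of the exceptional divisors $D_i$ or $D_{ij}$ of Definition \ref{bl-p3}, which is detected by the vanishing of appropriate coordinates, and otherwise the image is a line because each $R_i \cap R_j$ has degree $1$ over its image in $\P^3$. The main obstacle is purely organizational: keeping the large output of the component decompositions straight and handling the three ordinary double points of $R_5$ without spurious embedded components. This is routine given that the computations proceed on the smooth locus of $B$ and that the $R_i$ have explicit low-degree equations.
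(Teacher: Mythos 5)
Your proposal takes essentially the same approach as the paper: the paper explicitly states that these claims "would be challenging to prove by hand, but are easy with the help of a computer," and the proposition is asserted with a bare $\qed$, so its proof is precisely the Magma verification you outline. Your plan correctly spells out what that verification entails (pushing forward to strict transforms in $B$, decomposing the pairwise intersection ideals, checking smoothness, arithmetic genus, and transversality of the components, and distinguishing contracted curves from those mapping to lines), and it correctly flags the one real pitfall, namely avoiding spurious embedded components near the three nodes of $R_5$.
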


The conditions $h^{1,0} = h^{2,0} = 0, K = 0$ that are a 
part of the definition of a Calabi-Yau threefold (Definition \ref{def-cy}) 
are easy to verify for $B$ or for a resolution of its singularities.

\begin{prop}\label{easy-cy} Let $T$ be a smooth rational threefold and let 
$\pi: D \to T$ be a double cover branched along a smooth divisor $R$ of 
class $-2K_T$.  Then $h^{1,0}(D) = h^{2,0}(D) = 0$ and $K_D$ is trivial.
\end{prop}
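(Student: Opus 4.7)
The plan is to invoke the standard structure theorem for double covers. Since $R$ is smooth and $R \sim -2K_T$, we can take $L = \OO_T(-K_T)$ as the line bundle whose square cuts out the branch divisor. Then $\pi: D \to T$ is the double cover determined by a section of $L^{\otimes 2}$ vanishing on $R$, and one has the two standard formulas
\[
\pi_* \OO_D \;\cong\; \OO_T \oplus L^{-1}, \qquad K_D \;\cong\; \pi^*(K_T \otimes L).
\]
Substituting $L = \OO_T(-K_T)$ gives $K_D \cong \pi^*\OO_T = \OO_D$, so $K_D$ is trivial.

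For the Hodge numbers, I would use the projection formula together with the decomposition of $\pi_*\OO_D$. Since $\pi$ is finite, $H^i(D,\OO_D) = H^i(T,\pi_*\OO_D) = H^i(T,\OO_T) \oplus H^i(T, \OO_T(K_T))$. Because $T$ is smooth and rational, the birational invariants $h^{0,i}(T)$ vanish for $i > 0$; equivalently $H^i(T,\OO_T) = 0$ for $i > 0$. Serre duality on the smooth threefold $T$ then gives $H^i(T,K_T) \cong H^{3-i}(T,\OO_T)^\vee$, which vanishes for $i = 1, 2$. Thus $H^1(D,\OO_D) = H^2(D,\OO_D) = 0$, i.e.\ $h^{0,1}(D) = h^{0,2}(D) = 0$, and Hodge symmetry yields $h^{1,0}(D) = h^{2,0}(D) = 0$ as required.

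There is essentially no serious obstacle: the result is a direct application of well-known formulas, and the hypothesis $R \sim -2K_T$ is engineered exactly so that the twist in the canonical bundle formula cancels. The one point that deserves care is the smoothness of $D$, which follows from the smoothness of $R$ and ensures that both formulas above apply on the nose rather than requiring a resolution; this is already built into the hypotheses.
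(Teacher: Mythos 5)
Your proof is correct and takes essentially the same route as the paper: the standard double-cover canonical bundle formula $K_D \cong \pi^*(K_T \otimes L)$ (the paper phrases it via Riemann--Hurwitz), together with $\pi_*\OO_D \cong \OO_T \oplus L^{-1}$, Serre duality on $T$, and the vanishing of $h^i(\OO_T)$ for rational $T$. The only cosmetic difference is that the paper cites Cynk--Szemberg for the identity $h^i(\OO_D) = h^i(\OO_T) + h^i(K_T)$ rather than deriving it from the pushforward decomposition as you do.
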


\begin{proof} The statement on $K_D$ is an immediate consequence of the
Riemann-Hurwitz formula $K_D = \pi^*(K_T + R)$.  For the $h^{i,0}(D)$ we 
proceed as in
the proof of \cite[Theorem 2.1]{cynk-szem}.  We know that
$h^i(\O_D) = h^i(\O_T) + h^i(K_T)$, as mentioned at the beginning of
\cite[Section 1]{cynk-szem}.
But by Serre duality $h^i(K_T) = h^{3-i}(\O_T)$, and $h^i(\O_T)$ is a birational
invariant which is $0$ for $T = \P^3, i \in \{1,2\}$.
\end{proof}

We are now ready to prove:
\begin{thm}\label{is-cy} There is a smooth Calabi-Yau threefold
$CY_{13}$ of Euler characteristic $98$ which is birational to the variety $Q_1$
obtained by reduction from the graph polynomial of the graph labeled $(4,13)$
in \cite[Figure 5]{mf-qft}.
\end{thm}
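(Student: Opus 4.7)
The plan is to construct $CY_{13}$ as a resolution of the double cover $D_B \to B$ by successively blowing up the base along the singular locus of the branch divisor, following the Cynk-Szemberg recipe for double-cover Calabi-Yau threefolds. Since $D_B = H_3\times_{\P^3} B$ is by construction birational to $H_3$, which in turn is birational to $Q_1$ by Proposition \ref{to-hyp}, any smooth Calabi-Yau threefold that resolves $D_B$ will serve as the desired $CY_{13}$.

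First I would take a small resolution $\tilde B \to B$ of the three ordinary double points of $B$, which exists by the discussion following Definition \ref{bl-p3}. A small resolution is crepant, so $K_{\tilde B}$ is the pullback of $K_B$ and the pulled-back branch divisor $\sum\tilde R_i$ still lies in the class $-2K_{\tilde B}$. Using the combinatorial data of Proposition \ref{twenty-eight} together with the locations of the three nodes of $R_5$, I would then blow up centres in order of decreasing incidence: first the three points where four $\tilde R_i$ meet, then the seven points where three meet, then the three nodes of $R_5$ (if distinct from the previous), and finally the strict transforms of the twenty-eight curves $C_i$ along which two components intersect. At each step the centre sits inside the branch with some multiplicity $m\ge 2$, and the Cynk-Szemberg formula lets us replace the pulled-back branch by its strict transform when $m$ is even, or by the strict transform plus the exceptional divisor when $m$ is odd, keeping the new branch divisor in the class $-2K$ of the new base.

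Let $T$ be the smooth rational projective threefold obtained at the end of this process and $R\subset T$ the resulting branch divisor, which by construction is smooth and satisfies $R \sim -2K_T$. Define $CY_{13}$ to be the double cover of $T$ branched along $R$; smoothness of $R$ and $T$ ensures smoothness of $CY_{13}$, and Proposition \ref{easy-cy} then yields $h^{1,0}(CY_{13}) = h^{2,0}(CY_{13}) = 0$ together with $K_{CY_{13}} = 0$, so $CY_{13}$ satisfies Definition \ref{def-smooth-cy}. Birationality with $Q_1$ is inherited from $D_B$ through the base-change construction. The Euler characteristic then follows from $\chi(CY_{13}) = 2\chi(T) - \chi(R)$ by tracking the contributions through each blowup: a point blowup contributes $+3$ to $\chi$ of the base and a controlled correction to $\chi(R)$, while the blowup of a smooth curve $C$ contributes $+\chi(C)$.

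The main obstacle is the local analysis at the triple and quadruple points, where three or four of the $\tilde R_i$ meet and where several of the $C_i$ also pass. The order of blowups matters at such points, and one must verify case by case that, after blowing up all centres in the chosen order, the final branch divisor is genuinely smooth, disjoint from the remaining exceptional loci, and still lies in $-2K_T$, so that Proposition \ref{easy-cy} applies. Once this local verification is in hand, the global Euler-characteristic count becomes a mechanical bookkeeping exercise and should produce $\chi(CY_{13}) = 98$.
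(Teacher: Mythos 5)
Your overall strategy is the right one and matches the paper's: resolve the three nodes of $B$ by a small (crepant) resolution, prepare the branch divisor by blowing up the base, then take the double cover and invoke Proposition~\ref{easy-cy}, finally computing $\chi$ by bookkeeping. However, two of the blowup centres you choose would destroy the Calabi-Yau condition, so the argument as stated does not produce a Calabi-Yau at all.

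The first problem is blowing up the seven triple points. At a point where three smooth branch components meet pairwise transversally, the branch divisor has multiplicity $3$. Blowing up a point of a smooth threefold gives $K_{\mathrm{new}} = \pi^*K + 2E$, so to preserve $\text{branch} \sim -2K$ the branch class must drop by $4E$. The odd-multiplicity rule gives strict transform plus $E$, i.e.\ a drop of only $3E - E = 2E$. The relation $\sum |R_i| = -2K$ is therefore \emph{not} preserved, contrary to your claim that the parity rule automatically keeps the branch in $-2K$; the rule only tells you how to transform the branch, while the class constraint forces the multiplicity at a point centre to be $4$ or $5$ and at a curve centre to be $2$ or $3$. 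The Cynk--Szemberg recipe (and the paper) never blow up triple points: they are separated automatically once the curves through them are blown up, which is exactly the content of Proposition~\ref{meet-point}.

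The second problem is blowing up the three nodes of $R_5$ as points of the base. The branch there has multiplicity $2$ (only $R_5$ passes through, with an $A_1$ singularity), so the even-multiplicity rule drops the class by $2E$, again not $4E$. The correct treatment, used in the paper, is to observe that these nodes produce ordinary double points on the double cover, and to take a small (hence crepant) resolution of those at the end, which leaves the canonical class untouched. As a minor point, blowing up a point of a smooth threefold adds $+2$ to $\chi$ of the base (a point replaced by $\P^2$), not $+3$. Once the triple points and $R_5$-nodes are handled as in the paper, the rest of your outline is sound and the bookkeeping does give $\chi = 98$.
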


\begin{proof} 
We have shown above that $Q_1$ is birational to the double cover of $B$
branched along the union of the $R_i$.  Numerically this is a Calabi-Yau;
to show that it actually is one, we use the method of Cynk-Szemberg 
\cite{cynk-szem}.
First we resolve the three singular points of $B$, obtaining a smooth threefold
with $h^{1,1} = h^{2,2} = 7, h^{0,0} = h^{3,3} = 1$, and all other Hodge numbers 
$0$.  Then we blow up the fourfold intersection points of the $R_i$,
which are smooth on $B$, to exceptional divisors $E_j$.  When we do this,
we increase the canonical divisor on $B$ by $2 \sum E_j$; on the other hand, 
we replace the four surfaces by their strict transforms, 
and their classes decrease by $E_j$.
We thus preserve the relation $2K_B + \sum |R_i| = 0$.  We also increase the
Euler characteristic $\chi$ of $B$ by $2$, because we replaced a point with a 
$\P^2$.  The four surfaces that contain the point see it replaced by a rational
curve and thus their Euler characteristic increased by $1$.

Then we blow up the $28$ curves of intersection.  Each time we do this, we
obtain an exceptional divisor $E_C$ which we add to the canonical divisor, but
we decrease the classes of the two surfaces containing the curve by $E_C$, so 
again the relation $2K_B + \sum |R_i| = 0$ is preserved.  If the curve meets
a third surface in a point, this blows up that point on that surface, 
increasing its Euler characteristic by $1$.  Also we replaced
the rational curve, which has $\chi = 2$, by a $\P^1$-bundle over it
with $\chi = 4$.
Finally, we take a small resolution of the double points above the three
singular points of $R_5$, which are in the branch locus.  (This may take us
out of the projective category, but that does not matter here.)  This adds $3$
to the Euler characteristic of the branch locus and of the base.  We still
have $2K_B + \sum |R_i| = 0$, and now the base and branch locus are smooth.
In view of Proposition \ref{easy-cy}, this shows that it is a Calabi-Yau.

The Euler characteristic of the base is now $16 + 2 \cdot 3 + 2 \cdot 28+3=81$.
The branch locus originally consisted of surfaces whose Euler 
characteristics added to $42$.  However, we blew up $3 \times 4$ 
(for the fourfold points) $ + 7$ (for the threefold points) 
$+ 3$ (the singularities of the K3), so now we have $64$.  Their images in 
the blowup of the base are disjoint so the Euler characteristics add.
Thus the double cover has $\chi = 2 \cdot 81 - 64 = 98$.
\end{proof}

\begin{remark}\label{maybe-red}
The results of \cite{cynk-szem} are stated under the assumption 
that the intersection of any two components of the branch locus is
irreducible.  However, the arguments are entirely local,
so this assumption is unnecessary.
\end{remark}

\begin{remark} The fibration introduced in Remark \ref{k3-fib} has fibres
  whose transcendental lattice is isomorphic to $U \oplus \langle -26 \rangle$,
  where $U$ is the hyperbolic lattice with Gram matrix
  $\begin{pmatrix}0&1\cr1&0\cr\end{pmatrix}$.  Since it is not isotrivial,
    Theorem \ref{is-cy} together with \cite[Theorem 2.13]{dhnt} shows that
    $CY_{13}$ is a rigid
    Calabi-Yau threefold.  However, we give a different proof,
    because the geometry of $CY_{13}$ that we describe is 
    useful for counting the points to identify the modular form and for
    discovering other rigid Calabi-Yau threefolds.  In addition, this method
    of proving rigidity does not seem to apply to our other examples.
\end{remark}

To start our proof that $CY_{13}$ is rigid,
we introduce the configuration of lines in which the $s_i$
intersect.

%%% I have to say where exactly I prove rigidity!
\begin{defn} A {\em $(4,13)$-configuration} in $\P^3$ consists of
four planes $P_i (1 \le i \le 4)$ in general position and $13$ lines 
$L_{ij} (1 \le i \le 4, 1 \le j \le \max(i,3))$ satisfying the following
conditions:

\begin{enumerate}
\item For all $i, j$, the line $L_{ij}$ is contained in $P_i$ and in no
other $P$.
\item The following are the maximal sets of lines that intersect in a point:
$$\begin{aligned}
&\{L_{11},L_{13},L_{21},L_{23}\},\{L_{11},L_{12},L_{31},L_{32}\},
\{L_{21},L_{22},L_{31},L_{33}\},\\
&\quad \{L_{11},L_{41},L_{44}\},\{L_{21},L_{42},L_{44}\},\{L_{31},L_{43},L_{44}\},\\
&\quad \{L_{12},L_{22},L_{43}\},\{L_{13},L_{33},L_{42}\},\{L_{23},L_{32},L_{41}\},\\
&\quad \{L_{12},L_{13}\},\{L_{22},L_{23}\},\{L_{32},L_{33}\}.
\end{aligned}$$
\item If the intersection point of one of these sets of lines is contained in
$P_n$, then one of the lines is $L_{nj}$.
\item The intersection $L_{44} \cap P_i \cap P_j$ is empty for all $1 \le i < j
\le 3$.
\end{enumerate}
\end{defn}

%% \begin{remark} It appears to be a coincidence that
%% $(4,13)$ can be taken as referring either
%% to the weight and level of the modular form or to the number of planes and lines
%% in the configuration.
%% \end{remark}

\begin{prop} The automorphism group of $\P^3$ acts simply transitively on the
set of $(4,13)$-configurations.
\end{prop}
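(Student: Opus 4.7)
The plan is to normalize a $(4,13)$-configuration in stages via $PGL_4$, using the conditions to reduce the stabilizer at each stage. Since $\dim PGL_4 = 15$, if all normalizations succeed and kill the stabilizer entirely, the orbit is a point and both transitivity and freeness follow at once.

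First, using the classical fact that $PGL_4$ acts transitively on ordered $4$-tuples of planes in general position with stabilizer the $3$-dimensional diagonal torus $T = (\G_m)^4 / \G_m$, I would assume $P_i = \{x_i = 0\}$. Condition (4) says that $L_{44}$ avoids the three coordinate vertices $V_{ij4} = P_i \cap P_j \cap P_4$ for $1 \le i < j \le 3$, so $L_{44}$ has equation $a_1 x_1 + a_2 x_2 + a_3 x_3 = 0$ inside $x_4 = 0$ with $a_1 a_2 a_3 \ne 0$. The $2$-dimensional subtorus of $T$ acting faithfully on $P_4$ normalizes $L_{44}$ uniquely to $x_1 + x_2 + x_3 = 0$, reducing the stabilizer to the $1$-parameter subgroup $T'$ of diagonal matrices $(1,1,1,\lambda)$ mod scalars, which scales $x_4$ and fixes $P_4$ pointwise.

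To kill $T'$, I would use the quadruple point $R_{12} = L_{11} \cap L_{13} \cap L_{21} \cap L_{23}$. By condition (3), since the four lines through $R_{12}$ are all $L_{1*}$ or $L_{2*}$, the point $R_{12}$ lies in $P_1 \cap P_2$ but not in $P_3$ or $P_4$, and so $R_{12} = (0:0:\alpha:\beta)$ with $\alpha\beta \ne 0$. Since $T'$ acts on such points by scaling the last coordinate, I can uniquely normalize $R_{12}$ to $(0:0:1:1)$, killing the stabilizer.

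It then remains to verify that with these normalizations the incidence conditions uniquely determine the remaining $12$ lines, and that such a configuration exists. For instance, condition (3) applied to $\{L_{23}, L_{32}, L_{41}\}$ forces them to meet at $V_{234} = (1:0:0:0)$; the further incidence of $L_{41}$ with $L_{44}$ on $L_{11}$ (which passes through $R_{12}$) then pins down $L_{41}$, and similar propagation through the incidence graph determines each $L_{ij}$ in turn. The main obstacle I expect is checking systematically that this forward propagation is both consistent (no incidence is over-determined) and complete (every line is eventually fixed); this is a finite linear-algebra computation best performed with computer algebra. Existence itself then follows either from this explicit reconstruction or from the geometric situation considered earlier in the section, where the intersection lines of the surfaces $S_i$ furnish such a configuration.
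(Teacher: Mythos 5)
Your stabilizer-reduction scheme is sound and close in spirit to the paper's, differing only in the order of normalizations: the paper normalizes the four planes and $L_{44}$, then carries a free parameter $d$ through the computation and absorbs it into a rescaling of the last coordinate at the very end, whereas you kill the residual one-parameter torus up front by normalizing the quadruple point $R_{12} = L_{11}\cap L_{13}\cap L_{21}\cap L_{23}$, which you correctly place off $P_3$ and $P_4$ using condition (3). Both routes are valid, and your dimension count ($12 + 2 + 1 = 15$) is consistent.

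The genuine gap is that the heart of the proof — showing that the $12$ remaining lines are uniquely forced by the incidence conditions, with no over-determination and no residual freedom — is explicitly deferred to "a finite linear-algebra computation best performed with computer algebra." That is precisely what the paper does by hand: it parametrizes $L_{41}, L_{42}, L_{43}$ by $a,b,c$, the lines $L_{i1}$ by $d,e,f$, and so on, and then shows that the concurrence conditions force $d=e=f$, $g=b$, $h=a$, and finally $a=b=c=0$, after which the residual torus absorbs $d$. The delicate part, which your sketch does not engage with, is that some of the sets in condition (2) are automatically concurrent once the others are (the paper notes that $L_{12}$ and $L_{22}$ meet at $(0:0:d:c)$ "even though this has not been specifically required"), and that the requirement that these automatic intersections also lie on the expected $L_{4j}$ is exactly what pins down $a=b=c=0$. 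Until this chain of forced equalities is actually exhibited, neither transitivity nor existence is established, so the proposal as written is an outline rather than a proof.
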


\begin{proof}
Applying an automorphism, we take the $P_i$ to be the coordinate hyperplanes,
the sequence of which is stabilized by the diagonal subgroup of $PGL_4$.
Then by condition $4$, we rescale $x_1, x_2$ so that $L_{44}$ is given by
$x_0+x_1+x_2 = x_3 = 0$.  Now $L_{41}$ meets $L_{44}$ in a point where 
$x_0=0$, since the point is also on $L_{11}$, so it must be $(0:-1:1:0)$.
It is not the line $x_0=x_3=0$, so we write it as $ax_0+x_1+x_2=x_3=0$.  Similarly
$L_{42}, L_{43}$ are the lines $x_0+bx_1+x_2=x_3=0, x_0+x_1+cx_2=x_3=0$.

We take $L_{11}$, which is a line in $x_0=0$ through $(0:-1:1:0)$ which is
not $x_0=x_3=0$, so it is $x_0=x_1+x_2+dx_3=0$.  Similarly $L_{21}, L_{31}$ are given by
$x_1=x_0+x_2+ex_3=0, x_2=x_0+x_1+fx_3=0$.  From the fact that any two of these lines meet
it follows that $d=e=f$.

Now $L_{13}, L_{23}$ pass through $(0:0:-d:1)$, the point on $L_{11} \cap L_{21}$.
So they are respectively $x_0=gx_1+x_2+dx_3=0, x_1=hx_0+x_2+dx_3=0$.  From the fact that
$L_{13}$ meets $L_{42}$ we learn that $g=b$, and because $L_{23}$ meets $L_{41}$
it must be that $a=h$.  Similarly $L_{12}$ meets $L_{43}$ and goes through the
point where $L_{11}$ meets $L_{31}$, so it must be $x_0 = x_1+cx_2+dx_3=0$, and 
$L_{32}$ meets $L_{41}$ and also passes through this point, so it is
$x_2=ax_0+x_1+dx_3=0$.  Similarly $L_{22}$ is $x_1=x_0+cx_2+dx_3=0$ and $L_{33}$ is
$x_2=x_0+bx_1+dx_3=0$.

Now $L_{12}$ meets $L_{22}$ in the point $(0:0:d:c)$, even though this has
not been specifically required.  But we also want this point to be on
$L_{43}$, so $c = 0$.  Similarly, by considering the sets
$\{L_{13},L_{33},L_{42}\},\{L_{23},L_{32},L_{41}\}$, we see that $a = b = 0$.

The stabilizer of the part of the configuration we have considered
consists of automorphisms $(x_0:x_1:x_2:x_3) \to (x_0:x_1:x_2:\lambda x_3)$,
and since $x_3$ always appears on its own or in the monomial $dx_3$ we 
replace $x_3$ by $x_3/d$.  We now have no nontrivial automorphisms left and
no parameters appearing in the equations of the $13$ lines.  To recapitulate,
they are:
$$\begin{aligned}& L_{11}: x_0=x_1+x_2+x_3=0, L_{12}: x_0=x_1+x_3=0, L_{13}: x_0=x_2+x_3=0,\cr
&\quad L_{21}: x_1=x_0+x_2+x_3=0, L_{22}: x_1=x_0+x_3=0, L_{23}: x_1=x_2+x_3=0,\cr
&\quad L_{31}: x_2=x_0+x_1=0, L_{32}: x_2=x_1+x_3=0, L_{33}: x_2=x_0+x_3=0,\cr
&\quad L_{41}: x_3=x_1+x_2=0, L_{42}: x_3=x_0+x_2=0,\cr
&\quad L_{43}: x_3=x_0+x_1=0, L_{44}: x_3=x_0+x_1+x_2=0.\cr\end{aligned}
$$
\end{proof}

Using Magma we find that the linear system of octic surfaces singular along 
the $13$ lines of the $(4,13)$ configuration and the six lines $P_i \cap P_j$
has projective dimension $2$.
In fact, every such surface contains the union of the coordinate planes.
Thus we consider a 
net of quartics obtained by dividing out $x_0x_1x_2x_3$ from the sections of 
this linear system.  As a basis for this system, we may take
\begin{multline}\label{ls4-seqs}
V_0 = x_0x_1x_2x_3, \quad V_1 = (x_0+x_1+x_3)(x_0+x_2+x_3)(x_1+x_2+x_3)x_3, \cr
\quad V_2 = (x_0+x_1+x_2+x_3)(x_0+x_1+x_3)(x_0+x_2+x_3)(x_1+x_2+x_3).\cr
\end{multline}

\begin{defn} We denote the tangent bundle of a smooth variety $V$ by $\T_V$.
\end{defn}

\begin{prop}\label{twod-def}
  Let $V$ be a general quartic in the linear system spanned by
$V_0, V_1, V_2$.  Then the double cover of $\P^3$ with branch locus 
$VV_0 = 0$ has a resolution which
is a Calabi-Yau threefold with $h^{2,1} = 2$.
\end{prop}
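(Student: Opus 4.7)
The plan is to divide the proof into two parts: first, to exhibit a Calabi-Yau resolution of the double cover for a general $V$, and second, to verify the Hodge number $h^{2,1} = 2$.

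For the first part, I would verify that for a general member $V$ of the linear system, the branch divisor $VV_0 = 0$ has the same singularity configuration as the specific $V$ treated in Theorem \ref{is-cy}. By construction, every $V = \lambda V_0 + \mu V_1 + \nu V_2$ vanishes along the thirteen lines $L_{ij}$ of the $(4,13)$-configuration, since the octic linear system from which the $V_i$ arise consists of octics singular along these lines, and dividing by $V_0$ preserves vanishing along any $L_{ij}$ (which is not contained in $V_0 = 0$). The tetrahedron $V_0 = 0$ is singular along the six coordinate lines $P_i \cap P_j$. Since the branch divisor has degree $8 = -2K_{\P^3}$, triviality of the canonical class on the double cover follows from Riemann-Hurwitz together with Proposition \ref{easy-cy}. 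A Magma computation would confirm that the other expected features (the 28 curves of pairwise intersection, the 7 triple points, the 3 quadruple points, and the 3 singularities on the quartic surface analogous to $R_5$) persist for general $V$. The Cynk-Szemberg resolution procedure of Theorem \ref{is-cy} then applies verbatim to yield a smooth Calabi-Yau threefold.

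For the second part, I would establish $h^{2,1} = 2$ via bounds from both sides. For the lower bound $h^{2,1} \geq 2$, note that by the preceding proposition the stabilizer in $PGL_4$ of the $(4,13)$-configuration is trivial, so distinct points of the projective 2-dimensional linear system $\P\langle V_0, V_1, V_2 \rangle$ give non-isomorphic Calabi-Yau threefolds; the family therefore contributes two independent first-order complex structure deformations. For the upper bound $h^{2,1} \leq 2$, I would invoke the deformation theory of resolutions of double covers: every infinitesimal deformation of the resolved Calabi-Yau corresponds to an equisingular deformation of the branch divisor in $\P^3$. Any such deformation must preserve the $(4,13)$-configuration, because that configuration is determined by the codimension-two singular strata of the branch divisor. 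Hence it must correspond to varying $V$ within the linear system of quartics containing these lines, which has projective dimension 2.

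The main obstacle will be rigorously establishing the upper bound via deformation theory, specifically making precise the claim that every infinitesimal equisingular deformation of the branch divisor preserves the $(4,13)$-configuration and comes from varying $V$ within the linear system. A technically cleaner alternative, should the direct argument prove delicate, is to compute $h^{1,1}$ directly using the explicit resolution from Theorem \ref{is-cy} (which should still yield Euler characteristic $98$ for the general member, since the singularity structure is the same) and recover $h^{2,1}$ from the relation $\chi = 2(h^{1,1} - h^{2,1})$.
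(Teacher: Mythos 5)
Your overall strategy (build a Cynk--Szemberg resolution, then compute $h^{2,1}$ via the deformation theory of double covers) is the same as the paper's, but there are two concrete errors and one gap worth flagging.

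First, the premise that a general $V$ gives a branch divisor ``with the same singularity configuration as the specific $V$ treated in Theorem~\ref{is-cy}'' is false, and the rest of your Part~1 inherits this error. The specific choice $V = -V_0/4 + V_2$ used for $CY_{13}$ was selected precisely because it is \emph{maximally singular} in the linear system (cf.~Theorem~\ref{rigid13} and the discussion around (\ref{surf-13})): that is the point of a degeneration locus in the $\P^2$ of octics and carries extra singularities not present for a general member (in particular the three nodes on the quartic component, and the finer intersection pattern that appears after the blowup to $B$). For a general $V$, the singular locus of $VV_0 = 0$ consists only of double curves along the $19$ lines of the $(4,13)$-configuration together with some quadruple points, which is less than what you describe. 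This also invalidates the fallback route: the Euler characteristic of the resolution for general $V$ is \emph{not} $98$ --- both $h^{1,1}$ and $h^{2,1}$ differ from the $CY_{13}$ case (indeed, $h^{2,1}$ goes from $0$ to $2$), so $\chi$ cannot stay the same.

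Second, for the Hodge number, you correctly identify the upper bound as the hard part, and the paper does exactly what you gesture at but supplies the missing ingredients from Cynk--van Straten: $h^1(\T_{\tilde\P^3}\otimes\tilde\L^{-1})=0$ (\cite[Prop.~5.1]{cynk-vs}) so every deformation of $\tilde X$ is again a double cover of a deformation of the rigid base $\tilde\P^3$, and \cite[Props.~2.1--2.2]{cynk-vs} then identify $h^1(\T_{\tilde X})$ with equisingular deformations of the branch divisor. Combined with the earlier proposition that $PGL_4$ acts simply transitively on $(4,13)$-configurations (so the configuration is rigid), this forces the deformation space to be the $\P^2$ of quartics through the configuration, and one gets the equality $h^{2,1}=2$ in one step; a separate lower bound is not needed. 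Your proposed lower bound is, in any case, not rigorous as stated: triviality of the configuration's stabilizer does not by itself show the double covers are pairwise non-isomorphic (one would need the covering involution to be intrinsic to the Calabi-Yau), and even non-isomorphism of nearby members does not automatically give injectivity of the Kodaira--Spencer map.
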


\begin{proof} It is easily checked that the singular locus of $VV_0 = 0$
  is supported on the $(4,13)$ configuration and that it consists only of
  double curves and quadruple points (the latter at the six singular points
  of $V_0 = 0$).  In addition, all the curves in the singular locus are of
  geometric genus $0$ and any two meet transversely.

  Let $X$ be the double cover of $\P^3$ branched along $VV_0 = 0$.  Then as in
  \cite[Section 6]{cynk-vs} or \cite{cynk-szem}
  there is a Calabi-Yau resolution $\tilde X$ of $X$
  which is a double cover of a blowup $\tilde \P^3$ along a smooth branch locus.
  (We have already described the construction of a similar resolution in
  the proof of Theorem \ref{is-cy}.)
  We also observe that
  $h^1(\T_{\tilde \P^3} \otimes \tilde \L^{-1}) = h^1(\T_{\P^3}) = 0$
  by \cite[Proposition 5.1]{cynk-vs},
  where $\tilde L$ is the line bundle corresponding to the branch divisor.
  This tells us that every deformation of $\tilde X$ is a double cover of a
  deformation of $\tilde \P^3$.
  
  Combining this with \cite[Propositions 2.1--2.2]{cynk-vs} we conclude that
  $h^1(\T_{\tilde X})$ is the dimension of the space of equisingular deformations
  of the branch locus.  The $(4,13)$ configuration cannot be deformed, and the
  octics singular along its lines form a $\P^2$, so $h^1(\T_{\tilde X})$ is
  of dimension $2$.
  %% In general $h^{2,1}$ is the dimension of the space of infinitesimal
  %% deformations, but deformations of Calabi-Yaus in characteristic $0$
  %% are unobstructed.  (For an
  %% accessible exposition of this result, as well as references to the original
  %% papers, the reader may consult \cite{litt}.)  So this
  %% is the same as the dimension of the tangent space to the space of global
  %% deformations.  
\end{proof}

%% \begin{remark}\label{equi} This can also be proved by computing an equisingular
%% deformation ideal as in \cite{cynk-vs}.
%% \end{remark}

Thus, in order to find candidates for rigid Calabi-Yau threefolds in this
family, we need to find octics in the
linear system that have no deformations preserving their singularities.  In
other words, we need either a point in the $\P^2$ parametrizing the linear
system corresponding to an octic with a singularity that cannot be deformed,
or the intersection of two curves in the $\P^2$ corresponding to extra
singularities.

\begin{remark}
  As Meyer points out in a similar situation \cite[p{.} 161]{meyer-diss},
  the double covers branched along maximally singular octics,
  even if they are Calabi-Yau threefolds,
  are not necessarily rigid.  It is also necessary that the construction
  of the resolution not involve blowing up any curves of positive genus.
  I am grateful to Colin Ingalls for helping me to understand the ideas of
  \cite{cynk-vs} and to express the argument of Proposition \ref{twod-def}.
\end{remark}

To list these maximally singular octics,
we introduce an incidence correspondence $I \subset \P^3 \times \P^2$
such that $(p_3,p_2) \in I$ if and only if $p_3$ is a singular point of the
octic corresponding to the point $p_2$ in the linear system.  To be precise,
let the linear system of octics have basis $O_0, O_1, O_2$, where $O_i = V_0V_i$.
Then $I$ is defined by the partial derivatives
$\frac{\partial \sum_{i=0}^2 y_iO_i}{\partial x_j}$ for $0 \le j \le 3$.
Now, $I$ will have a component of dimension $3$ supported at $x=y=0$, 
reflecting the fact that every linear combination of $O_0, O_1, O_2$ is
singular along the line $x=y=0$; similarly for the other $18$ lines in the
singular subscheme of every octic in the family.  However,
we are only interested in the components
of $I$ that map non-dominantly to $\P^2$.  Of these there are $7$, but 
the image of one of them is a point contained in several others, which is
of no use to us.  The others are curves $C_i$ of degree $1,1,1,1,2,4$, of which
the quartic curve has one ordinary node and two ordinary cusps and is rational,
while the conic has rational points.  The octic corresponding to a general
point of $\P^2$ (resp{.} a point on one of the $C_i$, resp{.} a singular
point of $\cup C_i$) has $h^{2,1} = 2$ (resp.{} $1, 0$).
The $C_i$ are defined by the polynomials
\begin{multline}\label{surf-13}
 y_0, \quad y_1, \quad y_2, \quad y_0 - y_1, \quad 4y_0y_2 + y_1^2 - 2y_1y_2
+ y_2^2,\\
y_0^2y_1^2 + 16y_0y_1^3 + 64y_1^4 + 2y_0^2y_1y_2 + 144y_0y_1^2y_2 
        - 128y_1^3y_2 + y_0^2y_2^2 \\
\quad + 228y_0y_1y_2^2 + 96y_1^2y_2^2 + 
        104y_0y_2^3 - 32y_1y_2^3 + 4y_2^4.\\
\end{multline}

There are $11$ rational points of intersection of two or more $C_i$, one of 
which is a singularity of the quartic; the quartic has two other singular 
points.  Of these $13$ points, the four with $y_2 = 0$ give
octics which are multiples of $x_3^2$, so the double covers are not Calabi-Yaus.
Seven of the octics have singularities that lead to canonical singularities
on the double cover and so may give rigid Calabi-Yau threefolds.
Indeed we find Hecke eigenforms of weight
$4$ for which $a_p \equiv 1-[V]_p \bmod p$ in each case.
Of these, the three given by the points $(0:0:1),(0:1/2:1),(0:1:1)$ 
split completely into linear factors and duplicate known
arrangements of eight hyperplanes found in the table of
\cite[p{.} 68]{meyer-diss}.  One of them, coming from $(-1/4:0:1)$, is
the threefold we have been looking at, which appears to correspond to the 
form of level $13$.  The other three, given by 
$(-1/9:-1/9:1),(-1:1:1),(-256/3:-5/6:1)$, seem to realize the forms 
$78/4, 10/1, 390/5$ in the indexing used by Meyer.  Apparently no known
Calabi-Yau was previously conjectured to correspond to $78/4$, while there
are known threefolds giving $10/1$.  For $390/5$, the table of
\cite[p{.} 116]{meyer-diss} shows a nonrigid threefold;
however, we have found a rigid
one, as we will show in Proposition \ref{rigid390}.
%% Although we will not be able to prove that our threefold does in fact
%% realize $390/5$, much less would be required to complete our calculation to a
%% proof than Meyer's.

\begin{remark} In addition to the rational points, there are three 
Galois orbits of nonrational intersection points, one each with points defined
over $\Q(\sqrt{3}), \Q(i)$, and the cubic field of discriminant $148$.  
%% One expects the
%% numbers of points on the Calabi-Yaus given by the first and third of these to 
%% be governed by the traces of eigenvalues of Hilbert modular forms over the 
%% respective fields, but it is not clear how to characterize the second one.
\end{remark}

The octics corresponding to $(-64:-3/2:1)$, $(-1:3:1)$ have isolated
singularities of type $A_2$, $D_4$.  The singularity at an
$A_2$ point of the branch locus 
has no crepant resolution % (\cite{reid-min}, corollary 1.16),
so there is no reason to expect
a corresponding Calabi-Yau, and there may be no
modular form matching the point counts.
On the other hand,
the singularity at a $D_4$ point does admit crepant resolutions in some cases.
Both of these statements follow from a result due to Shepherd-Barron
(\cite[Proposition 5.1]{iyama-wemyss}).
Since we have not found a modular form corresponding to the octic given by
$(-1:3:1)$, we expect that this is not such a case and thus that no Calabi-Yau
resolution exists.  It is also possible that the level of the modular form is
beyond the range of
%% However, no modular form has turned up,
%% perhaps because the level is too large: the bad primes are $2, 3, 5, 23$,
%% and new singular components of dimension $1$ appear on reducing mod $2$ or $3$.
%% Based on \cite[Conjecture 6.3]{meyer-diss}, we might expect the level to 
%% be of the form $2^a\cdot 3^b\cdot 5^c\cdot 23$, where $a, b \ge 1, c \le 1$.
%% If so, it is not unlikely that the form would be beyond the range of
\cite[Appendix C]{meyer-diss}.
%% Another possibility is that the
%% singularity is locally factorial and so admits no crepant resolution.

%% \begin{question} Meyer \cite{meyer-diss}, section 4.8 gives certain surfaces
%% in $\P^3$ consisting of
%% the union of a cubic and five planes whose double covers have point counts
%% suggesting that they correspond to the modular form $10/1$.  However, those
%% cubics are not isomorphic to the one we have found here.  Based on the
%% Tate conjecture, we expect a correspondence between his double covers and ours
%% (compare \cite{meyer-diss}, conjecture 1.10).  Can it be found?
%% \end{question}

We now have enough information to prove that $CY_{13}$ is rigid.  As in
\cite{gouvea-yui}, \cite{dieu-man} this means that it is modular.
\begin{thm}\label{rigid13} $CY_{13}$ is rigid.
\end{thm}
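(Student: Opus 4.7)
The plan is to identify $CY_{13}$ as the member of the two-parameter family of Proposition \ref{twod-def} corresponding to the point $(y_0:y_1:y_2) = (-1/4:0:1) \in \P^2$, to verify that this point is a singular point of the reducible curve $\bigcup C_i \subset \P^2$ whose components are listed in equation \ref{surf-13}, and then to read off rigidity from the deformation-theoretic framework already set up in the proof of Proposition \ref{twod-def}.

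First, I would check by direct substitution that $(-1/4:0:1)$ lies on two distinct components of $\bigcup C_i$: the line $y_1 = 0$ and the conic $4y_0 y_2 + y_1^2 - 2 y_1 y_2 + y_2^2 = 0$ (since $4 \cdot (-1/4) \cdot 1 + 1 = 0$). Hence the point is a node of $\bigcup C_i$, which according to the dimension count made just after \ref{surf-13} is precisely where $h^{2,1}$ of the corresponding resolution is expected to drop from $2$ to $0$.

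Second, I would reuse the vanishing $h^1(\T_{\tilde \P^3} \otimes \tilde \L^{-1}) = 0$ obtained from \cite[Proposition 5.1]{cynk-vs} in the proof of Proposition \ref{twod-def}. This vanishing depends only on the ambient smooth threefold and the class of the branch divisor, so it remains valid for the resolution constructed in Theorem \ref{is-cy}. By \cite[Propositions 2.1--2.2]{cynk-vs}, $h^1(\T_{CY_{13}})$ therefore equals the dimension of the space of equisingular deformations of the branch octic within the net; at a node of $\bigcup C_i$, two independent extra singularity conditions cut this space down to a point. Since $K_{CY_{13}}$ is trivial, $h^{2,1}(CY_{13}) = h^1(\T_{CY_{13}}) = 0$, which is the definition of rigidity.

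The main obstacle is ensuring that the special branch octic at $(-1/4:0:1)$ still admits a crepant resolution compatible with the Cynk-Szemberg construction without requiring blowups of curves of positive genus; otherwise the identification of $h^1(\T_{CY_{13}})$ with the equisingular deformation space could fail. This, however, is exactly what was verified in the proof of Theorem \ref{is-cy}: the singularities encountered there (cdV lines of types $A_1$ and $A_3$ with a few compound $A_2$ and $A_4$ points, together with the three nodes on $R_5$) were resolved by blowups of rational curves and a small resolution, leaving all other Hodge numbers under control. Thus the only genuinely new content in proving Theorem \ref{rigid13} is the drop of $h^{2,1}$ forced by the node of $\bigcup C_i$ at $(-1/4:0:1)$, which the substitution above makes explicit.
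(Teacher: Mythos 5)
Your proposal follows the same strategy as the paper's proof: identify $CY_{13}$ with the octic at $(-1/4:0:1)$, verify that this is a point of maximal singularity (you make this concrete by exhibiting it as a transverse intersection of the components $y_1=0$ and $4y_0y_2+y_1^2-2y_1y_2+y_2^2=0$), and then invoke the Cynk--van Straten deformation machinery already set up in Proposition \ref{twod-def}, together with the observation that the base of the Cynk--Szemberg resolution arises from $\P^3$ by blowing up only points and rational curves. The paper states this more tersely (``the octic is maximally singular'') but the content is the same.
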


\begin{proof} The proof is almost identical to that of Proposition
  \ref{twod-def}.  The octic surface $V_0(-V_0/4+V_2) = 0$ is maximally
  singular, so there are no deformations as a double cover.  On the other
  hand, the base
  $B$ of the Cynk-Szemberg resolution constructed in the proof of
  Theorem \ref{is-cy} was obtained from $\P^3$ by blowing up only points and
  rational curves, so $H^1(\T_B \otimes K_B) = 0$.
  As in the proof of Proposition
  \ref{twod-def}, we conclude by referring to
  \cite[Propositions 2.1--2.2]{cynk-vs}.
\end{proof}

\section{Counting points}
In this and the next section
we will prove that the Galois representation on $H^3$ of the
rigid Calabi-Yau threefold $CY_{13}$ constructed
in the proof of Theorem \ref{is-cy} is isomorphic to that associated to the
rational newform $f_{13,4}$ of level $13$ and weight $4$.
In order to do this, we need to count the $\F_p$-points of $CY_{13}$.
We do not have an explicit model for
$CY_{13}$, so we will show how to relate its point counts to those of $H_3$
and count points there instead.  
%% Since $Q_3$ is a quintic threefold in $\P^4$, it is easy
%% to count its $\F_p$-points for reasonably small primes (in fact we will only 
%% need $p \le 50$).  Doing so, we find the following.
%% \begin{prop}\label{count-q3} For all primes $5 \le p \le 200$ the number of
%% $\F_p$-points on $Q_3$ is equal to $p^3 + 15p^2 - 26p + 1 - a_p$, where
%% $a_p$ is the Hecke eigenvalue for $f_{13,4}$.
%% \end{prop}
%% It is also easy to count points on the double octic $H_3$ associated
%% to $Q_3$.  
Using a computer the following statement is easily verified:
\begin{prop}\label{count-doct} For all primes $5 \le p \le 200$ we have
$[H_3]_p = p^3 + 6p^2 - 15p + 1 - a_p$, where $a_p$ is the $T_p$-eigenvalue
for $f_{13,4}$. \qed
\end{prop}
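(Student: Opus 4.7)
The plan is to verify the identity prime by prime by direct enumeration, as is appropriate for a finite, narrow range of primes. Recall that $H_3 \subset \P(4,1,1,1,1)$ is defined by $t^2 = D_0$, where $D_0$ is a polynomial of degree $8$ in $x_1,x_2,x_3,x_4$ obtained as the discriminant with respect to $x_0$ of the quartic defining $Q_3$. The projection
\[
H_3 \to \P^3,\qquad (t : x_1 : x_2 : x_3 : x_4) \mapsto (x_1 : x_2 : x_3 : x_4),
\]
is a double cover ramified along $D_0 = 0$, so for any odd prime $p$ one has
\[
[H_3]_p \;=\; \#\{P \in \P^3(\F_p) : D_0(P) = 0\} \;+\; \sum_{P \in \P^3(\F_p)}\!\bigl(1 + \chi_p(D_0(P))\bigr),
\]
where $\chi_p$ is the Legendre symbol on $\F_p^\times$ extended by $\chi_p(0) = 0$.

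First I would write a short Magma routine that loops over affine representatives of $\P^3(\F_p)$, evaluates $D_0$, classifies the value as zero, a nonzero square, or a non-square, and assembles $[H_3]_p$ from the three tallies via the formula above. Second, I would use the built-in modular forms package to compute the Fourier coefficients $a_p$ of $f_{13,4}$, the unique newform in $S_4(\Gamma_0(13))$ with rational coefficients. Third, for each prime $p$ with $5 \le p \le 200$, I would compare both sides of the claimed equation $[H_3]_p = p^3 + 6p^2 - 15p + 1 - a_p$ and confirm equality.

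The only real obstacle is computational cost: for $p$ close to $200$, $\#\P^3(\F_p) \approx p^3 \approx 8 \times 10^6$, and $D_0$ must be evaluated at every such point. This is well within reach for any modern finite-field arithmetic implementation, so the full enumeration over all primes in the range runs in minutes. No theoretical input beyond the explicit double-cover description of $H_3$ is needed; the proposition then follows by inspection of the resulting table of $([H_3]_p, a_p)$.
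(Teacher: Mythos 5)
Your overall plan matches what the paper does: the proposition is stated as a computer-verified fact, and direct enumeration of $\P^3(\F_p)$ combined with Magma's modular-forms routines is exactly the intended route. However, your counting formula is off. For a double cover of $\P^3$ branched along $D_0 = 0$, a ramification point lies under exactly one point of $H_3$, so the correct count is
\[
[H_3]_p = \sum_{P \in \P^3(\F_p)} \bigl(1 + \chi_p(D_0(P))\bigr),
\]
with $\chi_p(0) = 0$; equivalently, if $Z$, $S$, $N$ are the numbers of points where $D_0$ is zero, a nonzero square, or a nonsquare, then $[H_3]_p = Z + 2S$. Your proposed formula adds $\#\{D_0 = 0\}$ on top of the sum, which yields $2Z + 2S$ and double-counts the branch locus. (You would notice the discrepancy as soon as you compared with the predicted values, but as stated the formula is wrong.) A minor point: $Q_3$ is a quintic, not a quartic, though $D_0$ is indeed the degree-$8$ discriminant of $Q_3$ with respect to $x_0$ since $Q_3$ has degree $2$ in $x_0$. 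Otherwise, your description of the weighted model $H_3 \subset \P(4,1,1,1,1)$, the use of the Legendre symbol, and the observation that the computation is feasible for $p \le 200$ are all correct and align with the paper.
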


\begin{remark}
One could prove the relation 
$[Q_3]_p - [H_3]_p = 9p^2 - 11p$
for all $p \ge 5$ using the birational equivalence given in Proposition
\ref{to-hyp}, but this seems unnecessary since the
calculations needed here can easily be done on $H_3$.
\end{remark}

We now relate the number of points on $CY_{13}$ to that on $H_3$.

\begin{thm}\label{count-rigid} For all primes $p>2$ the relation
$[CY_{13}]_p - [H_3]_p = 43p^2 + 64p$ holds, where $CY_{13}$ is the rigid 
Calabi-Yau threefold of Theorem \ref{is-cy}.
\end{thm}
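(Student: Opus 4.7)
The plan is to count points on $CY_{13}$ by unwinding the construction used in the proof of Theorem \ref{is-cy}: $CY_{13}$ is obtained from $D_B = H_3 \times_{\P^3} B$ by a controlled sequence of blowups of smooth centres (the fourfold points, the threefold points, the $28$ intersection curves $C_i$), together with small resolutions of the three ODPs of $B$ and of the three nodes of $D_B$ above the singularities of $R_5$. Since $B$ and all centres of blowup are rational and geometrically well-understood, each step changes the point count by an explicit polynomial in $p$; adding these corrections to the count of a natural "first approximation" of $D_B$ and comparing to $H_3$ gives the claimed formula.

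First I would relate $[D_B]_p$ to $[H_3]_p$. The map $\pi_B:B\to\P^3$ is an isomorphism away from $L_1\cup L_2\cup L_3$, and over each $L_i$ it is the projection away from $L_i$ in $\P^3$, giving $\P^1$-fibres except at the three points $L_i\cap L_j$, where the fibres blow up to the $D_i,D_{ij},D_j$ configuration. Writing $D_B$ as a fibre product, for $x\in\P^3(\F_p)$ the fibre of $D_B\to\P^3$ over $x$ has cardinality $|\pi_{H_3}^{-1}(x)|\cdot|\pi_B^{-1}(x)|$, so
\[
[D_B]_p=[H_3]_p+\sum_i[L_i\setminus\textstyle\bigcup_{j\ne i}L_j]_p\cdot p\cdot|\pi_{H_3}^{-1}(\text{generic point of }L_i)|+(\text{corrections at }L_i\cap L_j),
\]
where the generic fibre sizes over the $L_i$ are determined by whether $L_i$ is contained in the branch locus (it is not, since the $R_i$ avoid the singularities of $B$). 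This reduces the computation to counts on $\P^3$, the three lines, and a finite number of special points, all of which are polynomials in $p$.

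Next I would track the resolution $\widetilde{D_B}\to D_B$. Each of the small resolutions of the three ODPs of $B$, and of the nodes of $D_B$ over the three singularities of $R_5$, replaces a singular $\F_p$-point by a $\P^1$, contributing $p$ to the count per resolved node; the fourfold and threefold intersection points of the $R_i$, when blown up on $B$, are replaced by $\P^2$'s (contributing $p^2+p$), and their preimages in the double cover unfold according to how many branch components pass through. The blowup of each of the $28$ rational curves $C_i$ on $B$ replaces a $\P^1$ by a $\P^1$-bundle over $\P^1$, contributing $(p+1)\cdot p$, and above each such curve the double cover either splits or is pulled back, giving a controlled contribution. Collecting all of these changes, the difference $[CY_{13}]_p-[D_B]_p$ is a polynomial in $p$ whose coefficients can be read off from the combinatorics already described in Propositions \ref{twenty-eight} and the proof of Theorem \ref{is-cy}.

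Combining the two steps gives an identity $[CY_{13}]_p-[H_3]_p=Ap^2+Bp+C$ with constants $A,B,C$ determined by this bookkeeping; one then verifies that $C=0$ and $(A,B)=(43,64)$ either by direct count or, more efficiently, by checking the identity for enough primes $p$ to pin down the three coefficients (noting that both sides are genuinely polynomial in $p$ once $p>2$, since the singular locus, the exceptional loci, and the branch behaviour are the same over all odd $p$). The main obstacle is the bookkeeping at the intersection points: at a fourfold point four components of the branch locus meet, and after blowup the double cover acquires non-normal crossings that must be further resolved, and similarly at points where a $C_i$ meets a third $R_j$; a careful local analysis (or, in practice, a computer calculation in Magma checking the identity on the small primes $5\le p\le 200$ and matching it with the polynomial structure forced by étale cohomology) is required to ensure no contribution is double-counted or missed.
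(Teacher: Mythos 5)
Your overall plan — pass from $H_3$ to the double cover $D_B$ of $B$, then follow the Cynk–Szemberg resolution step by step, adding the point-count contribution of each blown-up centre and each small resolution — is the same strategy the paper uses (Propositions \ref{count-octic}, \ref{count-four}, Theorem \ref{all-pos}, Proposition \ref{meet-point}, and the assembly that follows them). But there is a real gap in the justification, and it concerns precisely the claim you lean on at the end: that $[CY_{13}]_p - [H_3]_p$ is \emph{a priori} a polynomial in $p$ for odd $p$, so that it suffices to check a handful of primes and interpolate. Uniformity of the singular locus and branch configuration over all odd $p$ does \emph{not} by itself force the difference to be polynomial: whenever the double cover of an exceptional divisor has components of a reducible fibre, or components of its Picard group, conjugate over a nontrivial quadratic extension, a term of the form $\bigl(\tfrac{\alpha}{p}\bigr)p$ appears. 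This actually happens in the very next example in the paper — the level-$78$ Calabi–Yau has a $\bigl(\tfrac{-15}{p}\bigr)p$ term in the analogue of this formula (Proposition \ref{count-78}) — so the polynomiality is a theorem to be proved, not a background fact.

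What makes the count come out polynomial here is exactly the content you summarise as ``a controlled contribution'' over the $28$ curves and as ``non-normal crossings that must be further resolved'' near the multiple points. The paper pins this down: Theorem \ref{all-pos} identifies each exceptional divisor over a curve of intersection as a specific Hirzebruch surface, writes down the classes of the branch components, and verifies that each resulting double cover is $\Q$-rational with good reduction at all odd primes, hence contributes $p^2+(2+d)p+1$ points uniformly; Proposition \ref{count-four} does the analogous local computation at the fourfold points (using Meyer's count for $t^2 = \alpha\,uvw(u+v+w)$ and checking $-\alpha$ is a square); and Proposition \ref{meet-point} shows that blowing up curves in sequence, rather than independently, subtracts exactly $p$ for each order-$2$ intersection point, \emph{provided} the Picard group of the relevant exceptional surface is defined over $\Q$ — a hypothesis that must be, and is, verified. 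Your proposal names these steps but does not carry any of them out, and it is only after carrying them out that one knows the answer has the shape $Ap^2+Bp$ with no Legendre-symbol terms. Once that is established, the assembly $12p^2 - 3p + 3(p^2+2p) + 28(p^2+p) + 24p + 9p = 43p^2 + 64p$ is just arithmetic; checking small primes is a sanity check on the bookkeeping, not a substitute for it.
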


The rest of the section will be devoted to the proof of this theorem.
To begin, we pass from $H_3$ to the double cover $D_B$ of the variety $B$ 
defined in Definition \ref{bl-p3}.  

\begin{prop}\label{count-octic} For all $p>2$ we have 
$[B]_p = p^3 + 7p^2 + 4p + 1$ and
$[D_B]_p = [H_3]_p + 12p^2 - 3p$.  If $D_B'$ is the small resolution of $D_B$ at 
the double points above those of $B$, then $[D_B']_p = [D_B]_p + 6p$.
\end{prop}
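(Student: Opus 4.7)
For $[B]_p$, I would directly inspect the fibres of $B \to \P^3$ from the defining equations $y_0^{(i)}(x_1+x_2) = y_1^{(i)} x_i$ (for $i \in \{0,1,3\}$) in $\P^3 \times (\P^1)^3$. Away from $\bigcup L_i$ all three $\P^1$-coordinates are uniquely determined, so the fibre is a single point; above a generic point of one $L_i$ only the corresponding equation becomes $0=0$, so exactly one $\P^1$-coordinate is free and the fibre is $\P^1$; and above an intersection $L_i \cap L_j$ two of the equations become $0=0$, giving fibre $\P^1 \times \P^1$. The three lines lie in the common plane $x_1+x_2=0$, meet pairwise in distinct points, and admit no common point, so $|\bigcup L_i(\F_p)| = 3p$. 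Summing over the three strata yields
\[
[B]_p = (p^3+p^2-2p+1) + 3(p-1)(p+1) + 3(p+1)^2 = p^3+7p^2+4p+1.
\]

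For $[D_B]_p$, I would use the description of $D_B$ as the double cover of $B$ branched along $R = \bigcup R_i$, writing $[D_B]_p = 2[B]_p - [R]_p$, and then compute $[R]_p$ by inclusion-exclusion. The individual counts follow from Proposition \ref{twenty-eight} and the surrounding discussion: $[R_i]_p = p^2+3p+1$ for $i \in \{1,2,3\}$ ($\P^2$ blown up at two points), $[R_4]_p = p^2+4p+1$ (blown up at three points), and $[R_5]_p$ is the count of the singular quartic K3 component, where the Hecke eigenvalue $a_p$ of $f_{13,4}$ enters. The pairwise intersections $R_i \cap R_j$ are unions of components of the $28$ smooth rational curves of Proposition \ref{twenty-eight}; triple intersections occur at the $7$ triple points; quadruple intersections at the $3$ quadruple points; and no higher-order intersections exist. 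After the bookkeeping, invoking Proposition \ref{count-doct} to substitute for $[H_3]_p$ isolates the polynomial identity $[D_B]_p - [H_3]_p = 12p^2-3p$.

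Finally, each of the three ordinary double points of $B$ lies in the branch divisor (for example $L_0 \cap L_1 \in S_1 \cap S_2 \cap S_5$), and a local analysis of the double cover of a $3$-fold ordinary double point whose branch locus passes through it produces two $3$-fold nodes in the cover. A small resolution replaces each such $3$-fold node by a $\P^1$, adding $(p+1)-1 = p$ to the $\F_p$-count, and with six nodes total (two above each ODP of $B$) one obtains $[D_B']_p = [D_B]_p + 6p$. The main obstacle is the inclusion-exclusion step for $[R]_p$: the $28$ intersection curves and the $7$ triple and $3$ quadruple points must be tracked accurately, and the non-polynomial contribution from $R_5$ must cancel against the $-a_p$ in $[H_3]_p$ with the exact coefficient stated in Proposition \ref{count-doct}. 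Verifying this cancellation, together with the local model at the ODPs of $B$, is the most delicate part of the argument.
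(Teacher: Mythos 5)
Your count of $[B]_p$ is correct and uses essentially the same information as the paper, merely packaged as a stratification of $\P^3$ rather than as inclusion--exclusion on the six exceptional divisors $D_i, D_{ij}$ inside $B$; both give $p^3+7p^2+4p+1$.

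The second part contains a genuine gap. The identity $[D_B]_p = 2[B]_p - [R]_p$ is false for a double cover in general: a point of $B$ off the branch locus has $2$ or $0$ preimages over $\F_p$ depending on whether the branch function is a square there, so in fact $[D_B]_p = 2[B]_p - [R]_p - 2n_0$, where $n_0$ is the number of $\F_p$-points off $R$ where the branch function is a non-square. That correction term is not zero, and it is exactly the character sum that encodes the non-polynomial behaviour of $[D_B]_p$ and $[H_3]_p$; one cannot make it vanish by bookkeeping the $R_i$. Your suggested cancellation also cannot occur on weight grounds: the $a_p$ in Proposition~\ref{count-doct} comes from $H^3$ of the threefold (a weight-$4$ form), while the transcendental contribution to $[R_5]_p$ as a singular K3 surface would be a weight-$3$ eigenvalue, so these two terms are of different nature and cannot cancel. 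Finally, Proposition~\ref{count-doct} is a computational check for $5 \le p \le 200$ only, so it cannot be invoked in a proof claimed for all $p>2$. The paper sidesteps all of this by comparing $D_B$ directly to $H_3$: the two covers are identified over $\P^3\setminus\bigcup L_i$, so only the fibres over the exceptional set and over the $3p$ points of $\bigcup L_i$ need to be counted, and on the six exceptional $\P^1\times\P^1$'s the pulled-back branch function is literally a square (of a $(1,1)$-form), which is what makes a clean count possible.

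The third part is geometrically wrong but arrives at the right number. The ordinary double points of $B$ do \emph{not} lie in the branch divisor: the unnumbered proposition preceding Proposition~\ref{count-octic} states explicitly that the $R_i$ do not contain any singularities of $B$. (They are also not at $L_i\cap L_j$ in $\P^3$ but in the exceptional fibre over those points, so membership of $L_0\cap L_1$ in $S_1\cap S_2\cap S_5$ is beside the point for the strict transforms $R_i$.) It is precisely because the branch locus misses each node that its preimage in $D_B$ is \emph{two} nodes; if the branch locus passed through it you would get a single, typically worse, singularity, not two nodes. With the correct premise your count of six exceptional $\P^1$'s and the conclusion $[D_B']_p = [D_B]_p + 6p$ are fine.
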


\begin{proof} The map $B \to \P^3$ is an isomorphism outside the union of
the $D_i, D_{ij}$.  These are all isomorphic to $\P^1 \times \P^1$, so together
they
contain $6(p^2+2p+1)$ points.  There are nine nonempty twofold intersections:
the six $D_i \cap D_{ij}$ are lines containing $p+1$ points each, and the three
$D_i \cap D_j$ are points.  Finally, the three nonempty
threefold intersections $D_i \cap D_{ij} \cap D_j$ are points.  
So the total number of points in the union is 
$$6(p^2+2p+1) - 6(p+1) - 3 + 3 = 6p^2 + 6p.$$  These map to the union of three
coplanar lines in $\P^3$, which has $3p$ points, so $[B]_p = [\P^3]_p + 6p^2 + 
3p$ as claimed.

The next step is to compare $D_B$ to $H_3$.  On all six of the divisors,
the function 
defining the double cover is identically the square of the product of
a $(1,0)$-form and a
$(0,1)$-form on $\P^1 \times \P^1$.
So the double cover on a divisor consists of one point above each of the
$2p+1$ in the branch locus, and two above each of the $p^2$ remaining points.
In addition, on the intersection of two adjacent
branch divisors, the function is the square of a linear form, and on the
singular points it is a nonzero square.  Further, one of the lines in the
branch locus of a component misses the branch loci of the adjacent components,
while the other one meets them in one point each.

Thus, of the $6p^2+6p$ points of the union of the $D_i, D_{ij}$, we find that
$12p$ of them are in the branch locus of the double cover $D_B \to B$, while
the rest pull back to $2$ points each, giving a total of $12p^2$ points.
Since these correspond to $3p$ points of $H_3$ above the base locus of the map 
$\P^3 \to B$, the claim that $[D_B]_p = [H_3]_p + 12p^2 - 3p$ is proved.

For the last statement, note that the three double points of $B$ are
not in the branch locus, so they give two double points of $D_B$ each.  On
$B$, the small resolutions give rational curves with points
over $\Q$, so $p+1$ points over $\F_p$.  As already mentioned the branch 
function is $1$ there, so each component pulls back to two components which
curves of genus $0$ with rational points.  This means that we replace the
$6$ points with $6$ curves having $p+1$ points each.
\end{proof}

\begin{defn}
Let $D_V \to V$ be a double cover of a smooth threefold with branch divisor 
$R_V$ consisting of smooth components $B_1, \dots, B_k$.  Let $C_1, \dots, C_l$
be the components of the intersection of two or more of the $B_i$.
Assume that no curve
is contained in more than $3$ of the $B_i$, that no point is contained in more
than $5$ of the $B_i$, that the intersection of any two of the $C_j$ is
transverse, and that the relation $\sum_i [B_i] = -2K_V$ holds in $\Pic V$.
We refer to
the resolution of $D_V$ constructed in \cite{cynk-szem} as the 
{\em Cynk-Szemberg resolution} of $D_V$.
\end{defn}

\begin{remark} The hypotheses on the $B_i$ are those of
  \cite[Theorem 2.1]{cynk-szem},
except that we do not assume the $B_i \cap B_j$ to be irreducible
(cf.{} Remark \ref{maybe-red}).
\end{remark}

Before continuing with the proof of Theorem \ref{count-rigid},
we pause to explain how to count the points
on a double cover of an exceptional divisor without writing down the double
cover of the blowup explicitly.

\begin{meth}\label{count-points-dc}
A double cover is defined by a branch function
up to squares, so to find the double cover on the exceptional divisor we need
to choose a branch function that extends to the blowup without being identically
$0$ or $\infty$ there.  If we blow up a point, this is usually easy.  For
example, if the point is in $\P^3$, let it be $(c_0:c_1:c_2:c_3)$ and let
$y_0,y_1,y_2$ be variables: then evaluate the branch function at 
$(c_0+y_0:c_1+y_1:c_2+y_2:c_3)$ and discard all terms not of minimal degree in
the $y_i$.  The result is then the branch function for the exceptional $\P^2$
above a point.  More generally, if we are on another threefold, then instead
of adding multiples of $(1:0:0:0)$, etc., we simply add multiples of a basis
for the tangent space at the point.

For the double cover $\tilde D_B \to \tilde B$
of the blowup of a threefold $B$ along a curve $C$,
we proceed as follows.  
Write down the blowup $\tilde B$ of $B$ along $C$ and choose a point $P$ in the
exceptional divisor $E$ not contained in the branch locus.  Then find a curve
$C_P \subset \tilde B$ not contained in $E$ and smooth at $P$.
We know the branch function $f$ for $D_B \to B$, so restrict it to
the curve $C_P$ and use it to define a map $\alpha: C_P \to \A^1$ given by
$f/r^2$,
where $f,r^2$ are of the same degree and vanish to the same order at $P$ along
$C$.  Because $C_P$ is smooth at $P$, this map extends to $P$; it is not
$0$ or $\infty$ there, and the two points of the double cover lying above 
$P$ are defined over $\Q(\sqrt{\alpha(P)})$.  On the other hand, write down a
section $s$ of a suitable linear system on $E$ whose zero locus is the branch 
locus on $E$, and a form $t$ not vanishing at $P$ and defining a map 
$\beta: s \to \P^1$.  The correct branch function must be a multiple of $s$,
which has the correct branch locus, and must take $P$ to a square multiple of
$\alpha P$, so we have determined it up to an unimportant square factor.

In either case, we obtain a double cover of a rational surface as the 
exceptional divisor on $\tilde D_B$, which for us will always be a rational
surface.  The points may be counted in either of
two ways.  One way is to parametrize the surface and explicitly determine
the points blown up and curves contracted on the way from $\P^2$ to the
exceptional divisor.
%% ; since a smooth rational curve over $\F_p$ has $p+1$ points,
%% it is easy to count the points on the exceptional divisor.
Another, for the
exceptional divisors above a curve, is to use the natural projection to 
$\P^1$.  In our situation the branch locus will always consist of two sections
and some fibres.  The fibres in the branch locus are covered by rational curves
mapping isomorphically to them.  Where the two sections do not meet, the fibre
is replaced by a double cover of $\P^1$ branched in two points, which is
isomorphic to $\P^1$.  Where they do meet, the branch function on the fibre
is of the form $cf^2$ for a constant $c$ and the fibre is replaced by two curves
defined over $\Q(\sqrt c)$.  The reduction of this mod $p$ has $(1+(c/p))p+1$
points, as opposed to the $p+1$ points of the fibre in the exceptional divisor
on $B$.
\end{meth}

We now follow the steps of the Cynk-Szemberg resolution of $D_B$, keeping
track of the numbers of points at each stage.  The first step is to blow up
the fourfold points.

\begin{prop}\label{count-four} Blowing up each of the fourfold points on $B$
adds $p^2+2p$ points to the double cover over $\F_p$ for all odd $p$.
\end{prop}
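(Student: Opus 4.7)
The plan is to show $[X']_p = p^2 + 2p + 1$, where $X'$ denotes the preimage in the partially resolved double cover of the exceptional $\P^2$ above a blown-up fourfold point $P$ of $B$; subtracting $1$ for the single $\F_p$-point of $D_B'$ above $P$ (a singular point in the branch locus) then yields the claimed increment $p^2 + 2p$ per fourfold point. Locally near $P$, $D_B$ is given by $t^2 = f_1 f_2 f_3 f_4 \cdot u$, where $u$ is a local unit and $f_1, \dots, f_4$ are local equations of the four $R_i$ through $P$, all smooth at $P$ (the three nodes of $R_5$ are distinct from the three fourfold points, since at a node of $R_5$ the surface has two analytic branches and would appear as a ``fivefold'' rather than fourfold point). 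After blowing up $P$, the right-hand side vanishes to order $4$ along the exceptional divisor $E \cong \P^2$, so the substitution $t \mapsto e^2 t'$ gives a resolved double cover whose restriction $X'$ over $E$ is the double cover of $E$ branched along the union of the four lines $\ell_k = \P(T_P R_{i_k}) \subset \P(T_P B) = E$.

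By Proposition~\ref{twenty-eight} six distinct curves $C_{ij}$ pass through $P$, giving six distinct pairwise intersection points of the $\ell_k$, so no three are concurrent and the configuration is in general position. Hence $X'$ is a singular del Pezzo surface of degree $2$ with exactly six $A_1$ singularities, and its minimal resolution $\widetilde{X'}$ is a weak del Pezzo of degree $2$. A standard character-sum calculation for a double cover $t^2 = \ell_1 \ell_2 \ell_3 \ell_4$ of $\P^2$ (placing three of the $\ell_k$ along the coordinate hyperplanes and writing the fourth as $ax_0 + bx_1 + cx_2 = 0$) gives $[X']_p = p^2 + p + 1 + \chi(\alpha)\,p$, where $\chi$ is the quadratic character modulo $p$ and $\alpha \in \Q^*$ is an explicit invariant of the four-line configuration together with the normalization of the local equations $f_k$. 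The claim thus reduces to showing $\chi(\alpha) = 1$ for all odd $p$, i.e., $\alpha \in (\Q^*)^2$.

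I expect this last step to be the main obstacle, since it is not immediate from the definition of the configuration but depends on the specific geometry at each of the three fourfold points. To handle it I would apply Method~\ref{count-points-dc}: parametrize $E$ via $(y_0:y_1:y_2)$, substitute $P + \sum_j y_j v_j$ for a basis $v_j$ of $T_P B$ into the defining polynomial $f_1 f_2 f_3 f_4$ of the local branch locus, and extract the minimal-degree part to obtain an explicit quartic form on $E$ whose square class is $\alpha$. Carrying this out at each of the three fourfold points, using the explicit model of $B$ and the $R_i$ from Section~\ref{sec:4-13}, I would verify in Magma that $\alpha$ is always a rational square. Combined with the preceding reduction, this gives $[X']_p = p^2 + 2p + 1$ for all odd $p$, and the proposition follows.
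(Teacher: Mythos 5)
Your approach matches the paper's: blow up the fourfold point, identify the exceptional surface as a double cover of $\P^2$ branched along four lines in general position (transversality of the six $C_{ij}$ through $P$ from Proposition~\ref{twenty-eight} gives this), and reduce to a quadratic-character evaluation $p^2 + (1 + \chi(\alpha))p + 1$, which in the paper is Meyer's formula for $t^2 = \alpha\, uvw(u+v+w)$ in $\P(2,1,1,1)$. The one step you defer to a Magma check is exactly what the paper executes: it records the three explicit quartic branch forms obtained by linearizing the branch function at each fourfold point and finds $-\alpha = 4$, a square, so $\chi = 1$ for all odd $p$.
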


\begin{proof}
Blowing up a fourfold point on $B$ replaces it by a
$\P^2$, and the four surfaces contribute four lines to the branch locus, any
two of which meet transversely.  To find the constant, we evaluate the
branching function at a general point of $B$ infinitely near to the fourfold
point.  This shows that the double covers are obtained by extracting 
square roots of
$$4(u^2vw + uv^2w + uvw^2 + v^2w^2), 4(-u^2vw - uv^2w - uvw^2),
4(u^2vw + u^2w^2 + uv^2w + uvw^2).$$
Meyer shows (\cite[p.{} 73]{meyer-diss}) that 
the surface in $\P(2,1,1,1)$ defined by $t^2 = \alpha uvw(u+v+w)$ over $\F_p$
has $p^2+\left(1+\left( \frac{-\alpha}{p}\right) \right)p+1$ points.
When we change
variables to reduce the double covers above to this form, we find that
$-\alpha = 4$ for each of them.
\end{proof}

\begin{defn} We introduce notation for the $28$ singular curves of $B$.
The $13$ that
map to the $L_{ij}$ of the $(4,13)$ configuration will be denoted 
$\tilde L_{ij}$, while the $6$ that map to the intersections of two of the
planes will be the $P_{ij}$.  The $9$ that map to a point in $\P^3$ will be
called $Q_i$.
\end{defn}

\begin{thm}\label{all-pos} Let $\tilde D_B$ be the blowup of $D_B$ along $C$,
where $C$ is a curve in the intersection of two of the $S_i$.
Then for all $p>2$ the exceptional
divisor of $\tilde D_B$ has $p^2 + (2+d)p + 1$ points over $\F_p$, where
$d$ is the number of points where $C$ meets exactly one other curve in the
branch locus.
\end{thm}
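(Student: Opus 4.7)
The plan is to apply Method~\ref{count-points-dc} to the exceptional divisor $E$ of $\tilde D_B \to D_B$. First I would describe $E$ as a double cover of the exceptional divisor $E_B$ of the blowup $\tilde B \to B$ of $B$ along $C$. Since $C$ is a smooth rational curve, $E_B$ is a $\P^1$-bundle over $C \cong \P^1$. The branch locus of $E \to E_B$ consists of two sections $s_i, s_j$ corresponding to the two branch surfaces $S_i, S_j$ containing $C$, together with additional divisors arising from the $d$ special points on $C$ at which $C$ meets one further curve in the branch locus.

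The main step is to count $\F_p$-points of $E$ via the projection to $C$, analyzing the fibres. For each of the $p+1-d$ generic fibres (over points of $C$ where no special structure occurs), the fibre is a double cover of $\P^1$ branched at the two distinct points where $s_i$ and $s_j$ meet it, hence isomorphic to $\P^1$ and contributing $p+1$ points over $\F_p$. For each of the $d$ special fibres, I would verify by local analysis that the fibre is reducible, consisting of two rational components meeting transversely at a single point, and thus contributing $2p+1$ points over $\F_p$. Summing,
\[
(p+1-d)(p+1) + d(2p+1) = (p+1)^2 + dp = p^2 + (2+d)p + 1.
\]

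The main obstacle is confirming that each special fibre contributes exactly $2p+1$ points independently of $p$, rather than a count of the form $(1+(c/p))p+1$ as can occur in Method~\ref{count-points-dc} when the two sections meet on the fibre. This requires showing that each special fibre arises as a union of two rational components defined over $\Q$, equivalently that the constant $c$ is a rational square at each special point. I expect this to follow from a case-by-case local analysis of the Cynk-Szemberg resolution at each type of special point appearing in the configuration descended from the graph $(4,13)$, using the fact that the relevant reducible structure comes from resolving a node at the intersection of a section with a branch fibre, whose equation is defined over $\Q$.
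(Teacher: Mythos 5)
Your fibration strategy is the right overall shape — project the exceptional divisor $E$ to $C \cong \P^1$, note that a generic fibre of $E$ is a $\P^1$ with $p+1$ points, and observe that the $d$ special fibres split into two components. But the reducibility does not come from ``additional divisors arising from the $d$ special points'': at a point where $C$ meets exactly one other curve (necessarily another component of the same $S_i \cap S_j$, since a third branch surface through the point would produce two further curves), no new branch component appears. Rather the two branch sections $E_i$, $E_j$ come together on that fibre, which is the mechanism made precise in Proposition \ref{meet-point}. Whole fibres do enter the branch locus, but this happens at triple points of the configuration on $C$, which are not counted by $d$; they still contribute $p+1$ each, so your sum $(p+1-d)(p+1) + d(2p+1)$ is numerically unaffected, but the picture motivating it is not the one that actually occurs.

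The more serious issue is the step you flag as the ``main obstacle'': that each special fibre splits into two $\Q$-rational components, equivalently that the constant $c$ in the branch function $cf^2$ is a rational square. There is no general reason for this. It is not a consequence of resolving a $\Q$-rational node, and the Remark following Proposition \ref{meet-point}, together with the level-$390$ example (where the analogous constant is $-2$ and introduces a $(-2/p)$ into the count), shows it can genuinely fail in a completely parallel situation. The paper fills this gap not by any local principle but by identifying each exceptional divisor explicitly: as $\P^1 \times \P^1$ or $F_2$ in the easy cases, and in the remaining cases by exhibiting a birational map to $\P^2$ in which only $\Q$-rational smooth rational curves of good reduction are blown up or down. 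So your proposal has a genuine gap at precisely the point you suspect, and the gap can only be closed by case-by-case verification, not by the conceptual argument you hope for.
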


\begin{proof} For each curve $C$ we compute the branch locus and the branch
function for the double cover of the exceptional divisor of the blowup of
$C$ in $B$.  The following table describes the results.  Here
$F_i$ denotes the Hirzebruch ruled surface with special section of
self-intersection $-i$ and the classes are given in terms of $S, F$, the
classes of the special section and a fibre.  While the Euler characteristics
are not necessary for the argument, they are very useful for consistency checks.

\begin{table}[h]
\centering
\begin{tabular}{|l|c|c|c|}
\hline
Curves&Exc.{} div.&Branch classes&$\chi$(branch locus)\cr\hline
$P_{12}, P_{13}, P_{23}$&$F_0$&$S,S,F,F$&$4$\cr\hline %1,2,9
$\tilde L_{ij} (1 \le i \le 3, 2 \le j \le 3)$ & $F_0$ & $S,S+2F$ & 2\cr\hline %4,5,11,12,17,18
$P_{14}, P_{24}, P_{34}$ & $F_1$ & $S, F, S+F$ & 4 \cr\hline %3,10,16
$\tilde L_{11}, \tilde L_{21}, \tilde L_{31}$ & $F_1$ & $F, S+F, S+2F$ & 2 \cr\hline %7,15,20
%% branch polynomial is -x^2yz - xy^2z + x^2z^2/4 = xz(-xy-y^2+xz/4)
%% by changes of variables, all 9 in the next case reduce to this
$Q_i$ & $F_1$ & $F, S+F, S+2F$ & 2 \cr\hline %6,8,13,14,19,21,26,27,28
%% branch poly -3/4*x^2*y^2 + y^4 - x^2*y*z + 3*y^3*z + 3*y^2*z^2 + y*z^3,
%% or better (y+z)(x^2y-3x^2z+4z^3)
%% components meet in [1,0,0],[1,0,-1],[1,0,1], while the cubic has a cusp
%% at [0:-1:1] and the tangent line is x=0
$\tilde L_{41}, \tilde L_{42}, \tilde L_{43}$ & $F_1$ & $S+F, S+3F$ & 1\cr\hline%22,23,24
%% the image of F_3 in P^4 is defined by -v^2+yw+vw = xv-vw+w^2 =
%% xy-v^2+2vw-w^2 = 0, and the branch polynomial is z^2+6zv+5yw-6zw-5vw+5w^2.
%% the double cover goes nicely into P^5 and has p^2+3p+1 points.
%% Since the branch polynomial is 1 at the singular point (0:0:1:0:0), we
%% get p^2+5p+1 points as expected.
$\tilde L_{44}$ & $F_3$ & $S+3F, S+3F$ & 1\cr\hline %25
\end{tabular}
\smallskip
\caption{Exceptional divisor and branch locus above the 
intersection of two components of the branch locus of $D_B \to B$.}
\end{table}

Every double cover of $\P^1 \times \P^1$ branched along two fibres and two
sections is isomorphic to $\P^1 \times \P^1$, and thus has $p^2 + 2p + 1$
points over $\F_p$ for all $p$.  Also, these curves do not contain any point 
where exactly two of the $28$ curves intersect.

A double cover of $F_1$ branched along curves of class $S, F, S+F$
is the blowup of the cone over a smooth conic at the vertex.  This is $F_2$,
which has $p^2+2p+1$ points.

In the other cases, we apply Method \ref{count-points-dc} to construct the
double covers.  To count their points, we
check that the variety is birational to $\P^2$
in such a way that only curves that are isomorphic to $\P^1$ over $\Q$ and have
good reduction at all odd primes are blown up or down (otherwise the number
of points would not always change by $dp$).  

For example, for $L_{12}$, we find that the double cover of $\P^1 \times \P^1$
is defined by $$t^2 = -x_1^2y_1y_1 - x_1x_2y_1y_2 + x_2^2y_2^2/4.$$
Embedding $\P^1 \times \P^1$ into $\P^3$ by the Segre embedding and pulling
back, we get the surface in $\P^4$ defined by
$$x_1^2+x_2x_3+x_2x_5-x_5^2/4 = x_3x_4-x_2x_5 = 0.$$
Projecting away from the singular point $(0:-1:0:1:0)$ gives a birational 
equivalence with the smooth quadric $y_1^2 + y_2y_3 - y_4^2/4$, which has 
$p^2+2p+1$ points over $\F_p$ for all odd $p$.  This blows up the base point
to a smooth rational curve and contracts three smooth rational curves meeting
in the base point to the points $(-1/2:0:-1:1),(1/2:0:-1:1),(0:1:0:0)$, so 
the number of points on the surface in $\P^4$ is $2p$ more than that of the
quadric, which is $p^2+4p+1$ as claimed.

The other cases are similar.  Rather than working directly with the double
cover of $F_1$ or $F_3$, it is easier to map to $\P^2$ and restore the 
contribution of the inverse image of the exceptional divisor at the end.
If the branch locus meets the exceptional divisor in two distinct points,
it is covered by a $\P^1$ which has $p+1$ points.  If the two points coincide,
then there are $2p+1$ if the branch function restricts to a square, as is true
in all cases here.
%% The details are too boring for the paper.
\end{proof}

\begin{defn} Let $B_f$ be $B$ blown up on the fourfold points and let
$D_{B_f}$ be its double cover.
To avoid notational clutter, we will use $C, S_i$, etc.{} equally to refer
to curves and surfaces in $B, D_B$ and their strict transforms in $B_f, D_{B_f}$.
\end{defn}

%% To find the double cover exactly
%% without having to blow up $D_{B_f}$ explicitly, we proceed as follows.  
%% Write down the blowup $\tilde B$ of $B$ along $C$ and choose a point $P$ in the
%% exceptional divisor $E$ not contained in the branch locus.  Then find a curve
%% $C_P \subset \tilde B$ not contained in $E$ and smooth at $P$ (this is not
%% difficult).  We know the branch function $f$ for $D_B \to B$, so restrict it to
%% the curve $C_P$ and use it to define a map $\alpha: C_P \to \A^1$ given by
%% $f/r^2$,
%% where $f,r^2$ are of the same degree and vanish to the same order at $P$ along
%% $C$.  Because $C_P$ is smooth at $P$, this map extends to $P$; it is not
%% $0$ or $\infty$ there, and the two points of the double cover lying above 
%% $P$ are defined over $\Q(\sqrt{\alpha(P)})$.  On the other hand, write down a
%% section $s$ of a suitable linear system on $E$ whose zero locus is the branch 
%% locus on $E$, and a form $t$ not vanishing at $P$ and defining a map 
%% $\beta: s \to \P^1$.  The correct branch function must be a multiple of $s$,
%% which has the correct branch locus, and must take $P$ to a square multiple of
%% $\alpha P$, so we have determined it up to an unimportant square factor.

So far we have described the result of blowing up a single curve on $D_{B_f}$;
now we need to understand what happens when the curves are blown up in order.
In general, let $D_T \to T$ be a double cover of a threefold branched along
some surfaces $S_i$.  Let $C$ be a component of $S_1 \cap S_2$, 
and suppose that $C$ is smooth of genus $g$, meets all other components of 
the intersection of two $S_i$
transversely, and is disjoint from the singular loci of the $S_i$ and all
intersections of four of the $S_i$.  Let $\tilde D_T, \tilde T$
be the blowups of $D_T, T$ along $C$, with exceptional divisors $E_{D_T}, E_T$, 
and let $\tilde S_1, \tilde S_2$ be the
strict transforms of $S_1, S_2$.  Let $E_i = E_{D_T} \cap \tilde S_i (i \in \{1,2\})$.
Then $\tilde D_T$ is a double cover of $\tilde T$, as in
\cite[proof of Theorem 2.1, case (d)]{cynk-szem},
and the branch locus is the strict transform of
that of $D_T \to T$.  
Moreover, our assumptions ensure that $E_T$ is a $\P^1$-bundle over $C$ and 
that the $E_i$ are sections.  If we restrict $\tilde D_T \to \tilde T$ to a
double cover $E_{D_T} \to E_T$, the branch locus consists of the $E_i$ together
with the fibres where $C$ meets another $S_i$.

\begin{prop}\label{char-dc}
With the notation of the previous paragraph, let $d = \#(E_1 \cap E_2)$.  Then
the Euler characteristic of $E_T$ is $2(2-2g)$, while that of $E_D$ is
$2(2-2g)+d$.
\end{prop}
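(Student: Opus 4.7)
My plan is to identify $E_T$ as a $\P^1$-bundle over $C$ and to compute $\chi(E_D)$ via the Euler characteristic of the reduced branch locus, exploiting the fact that the fibres appearing in the branch divisor contribute nothing.

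Since $C$ is smooth and, by hypothesis, contained in the smooth locus of $T$, the normal bundle $N_{C/T}$ has rank two and $E_T = \P(N_{C/T})$ is a $\P^1$-bundle over $C$. Multiplicativity of Euler characteristic for fibre bundles then gives $\chi(E_T) = \chi(\P^1)\chi(C) = 2(2-2g)$, which is the first claim.

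For the second, I would use the identity $\chi(E_D) = 2\chi(E_T) - \chi(\Delta)$ for a double cover with reduced branch locus $\Delta$; this follows from stratifying $E_T$ into the étale locus $E_T \setminus \Delta$ (where the map is $2$-to-$1$) and $\Delta$ itself (where it is set-theoretically bijective), so $\chi(E_D) = 2\chi(E_T \setminus \Delta) + \chi(\Delta)$. The branch locus of $E_D \to E_T$ is $\Delta = E_1 \cup E_2 \cup \bigcup_i F_i$, the $F_i$ being the fibres of $E_T \to C$ above the points of $C$ that lie on a third branch component. The transversality hypotheses on $C$ (together with the assumption that $C$ avoids points where four of the $S_j$ meet) imply that each $F_i \cong \P^1$ crosses $E_1$ and $E_2$ in two distinct points, so $\chi(F_i \setminus (E_1 \cup E_2)) = \chi(\P^1) - 2 = 0$; inclusion-exclusion then collapses the computation to $\chi(\Delta) = \chi(E_1 \cup E_2)$.

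Since $E_1, E_2$ are sections isomorphic to $C$ meeting transversely in $d$ points by definition, $\chi(E_1 \cup E_2) = 2(2-2g) - d$ and therefore $\chi(E_D) = 4(2-2g) - (2(2-2g) - d) = 2(2-2g) + d$, as claimed. The subtlety I would need to address carefully is the applicability of the double-cover Euler-characteristic identity in the presence of normal-crossings singularities of $\Delta$ (at the $d$ points of $E_1 \cap E_2$ and at each $F_i \cap E_j$); this poses no difficulty, because the stratification argument uses only the constructible decomposition of $E_T$ and the reduced structure of $\Delta$, not smoothness of the branch divisor.
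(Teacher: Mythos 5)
Your argument is correct and coincides with the paper's own: both identify $E_T$ as a $\P^1$-bundle to get $\chi(E_T) = 2(2-2g)$, both invoke the double-cover identity $\chi(E_D) = 2\chi(E_T) - \chi(\Delta)$, and both observe that each branch fibre $F_i$, being a $\P^1$ meeting $E_1 \cup E_2$ in two points, contributes zero to $\chi(\Delta)$, reducing to $\chi(E_1 \cup E_2) = 2(2-2g) - d$. Your extra remark justifying the double-cover formula by constructible stratification (making it insensitive to the normal-crossing singularities of $\Delta$) is the same point the paper dismisses as ``easily seen from topology,'' so this is a slightly more explicit write-up of an identical argument, and your final arithmetic $2(2-2g)+d$ agrees with the proposition statement (the paper's last displayed expression has a typographical slip).
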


\begin{proof} Since $E_T$ is the exceptional divisor of a blowup of $T$ along
a smooth curve of genus $g$, it is a $\P^1$-bundle and hence has Euler 
characteristic $\chi(\P^1) \chi(C_g) = 2(2-2g)$.  To calculate $\chi(E_D)$,
we recall the formula $\chi(E_D) = 2\chi(E_T) - \chi(B)$, where $B$ is the
branch locus, which is valid for any double cover and easily seen from topology.
Here we have $\chi(E_1 \cup E_2) = 2(2-2g)-d$.  But $B$ is obtained from
$E_1 \cup E_2$ by adding fibres, which are $\P^1$ with two points deleted
and have Euler characteristic $0$, so $\chi(B) = 2(2-2g)-d$ as well and
$\chi(E_D) = 2(2-2g+d)$.
\end{proof}

\begin{prop}\label{meet-point} With the same notation as before, let 
$S_1, S_2$ be components of the branch locus of $D \to T$, and let
$C_1, C_2$ be components of $S_1 \cap S_2$ meeting transversely in a point $P$
on no other curve in the branch locus.  Let $D_{12}$ be $D$ blown up
along $C_1$ and then $C_2$ and let the exceptional divisors be $E_{11}, E_{12}$.
Let $D_2$ be $D$ blown up along $C_2$, and let the exceptional divisor
be $E_2$.  Suppose
that the Picard group of $E_2$ is defined over $\Q$.  Then, for all
primes $q$ of good reduction, we have $[E_2]_q - [E_{12}]_q = q$, and the
Picard group of $E_{12}$ is also defined over $\Q$.
\end{prop}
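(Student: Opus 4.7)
The plan is to compare $E_2$ and $E_{12}$ locally near $P$, since the blowups along $C_1$ and $\tilde C_2$ commute canonically away from $P$, so that $E_2$ and $E_{12}$ are canonically isomorphic over the complement of a single fiber. The whole comparison reduces to the fiber of $E_2$ over $P$ and that of $E_{12}$ over the point $P' \in \tilde C_2$ lying above $P$.

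To analyze the fiber in $E_2$ above $P$, I would use the description of $E_2$ as the double cover of the exceptional ruled surface $F_2 \to C_2$ of the base blowup $Bl_{C_2} T \to T$, branched along the strict transforms of $S_1, S_2$ restricted to $F_2$. Because $S_1 \cap S_2$ contains $C_1$ and $C_1$ meets $C_2$ transversely at $P$, the two branching sections meet at a single point in the fiber of $F_2$ above $P$. The fiber of $E_2 \to C_2$ over $P$ is therefore reducible, consisting of two $\P^1$-components meeting at one point. The Picard group hypothesis forces both components to be individually defined over $\Q$, so this fiber contributes $2q+1$ points over $\F_q$ for all primes $q$ of good reduction.

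For the fiber of $E_{12}$ above $P'$, the preliminary blowup along $C_1$ removes $C_1$ from the branch locus of $D_1 \to T_1$: the exceptional ruled surface above $C_1$ is not part of the branch locus. Consequently the analogous sections of the ruled surface above $\tilde C_2$ in $T_{12}$ are disjoint above $P'$, so the fiber of $E_{12}$ there is a smooth $\P^1$ with $q+1$ points. Combined with the canonical isomorphism elsewhere, this gives $[E_2]_q - [E_{12}]_q = (2q+1) - (q+1) = q$; equivalently, this follows from Theorem \ref{all-pos} applied in both settings, with the meeting-point count decreasing by exactly one after the preliminary blowup.

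For the Picard group of $E_{12}$: both the sections of the ruled surface above $\tilde C_2$ and the special fibers of $E_{12}$ are defined over $\Q$, inherited from the $\Q$-rationality of $S_1$, $S_2$, and $\tilde C_2$. By the same reasoning that makes the Picard group hypothesis meaningful for $E_2$, the Picard group of $E_{12}$ is generated by $\Q$-rational classes. Equivalently, one may view $\Pic(E_{12})$ as a quotient of $\Pic(E_2)$ by the class of one of the two $\Q$-rational components of the nodal fiber, which is again defined over $\Q$. The main difficulty, and the reason for the Picard hypothesis, is to ensure that the two components of the nodal fiber of $E_2$ are each individually Galois-stable, rather than a Galois-conjugate pair, as that is what makes the point-count contribution of $2q+1$ (not merely $1$) over $\F_q$.
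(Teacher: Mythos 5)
Your proposal is correct and takes essentially the same approach as the paper: localize near $P$, show the fiber of $E_2$ over $P$ is a nodal conic (two $\P^1$'s meeting in a point, $2q+1$ points) while the fiber of $E_{12}$ over $\tilde P$ is a smooth $\P^1$ ($q+1$ points), and conclude $[E_2]_q - [E_{12}]_q = q$. You also correctly identify the role of the Picard hypothesis (ensuring the two components of the nodal fiber are individually Galois-stable), which the paper's own proof handles somewhat differently by appealing to the components being "the only effective curves in their linear equivalence class." One small point: the step "the preliminary blowup along $C_1$ removes $C_1$ from the branch locus; consequently the sections above $\tilde C_2$ are disjoint at $P'$" asserts the disjointness without quite giving the reason. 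The paper's argument is that the normal directions to $C_1$ at $P$ in $S_1, S_2$ both equal the tangent to $C_2$, so after blowing up $C_1$ the strict transforms $\tilde S_1, \tilde S_2$ acquire distinct normal directions to $\tilde C_2$ at $\tilde P$ (otherwise $S_1 \cap S_2$ would have a worse singularity than a node at $P$). Worth making that explicit, but the argument goes through.
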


\begin{remark} We showed in Theorem \ref{all-pos} that the hypothesis on the
Picard group always holds for the resolution of $CY_{13}$.  If it did not
we would obtain the more general relation $[E_2]_q - [E_{12}]_q = (\alpha/q) q$,
where the field of definition of the two components of the fibre is
$\Q(\sqrt \alpha)$.  This will be used in a later example.
\end{remark}

\begin{proof} 
The normal directions to $C_1$ at $P$ in $S_1, S_2$ coincide with the tangent
direction to $C_2$, so 
$E_{11}, E_{12}, \tilde C_2$ all meet in a point $\tilde P$ above $P$.  Thus in 
the double cover $E_D \to E_T$, the two points in the branch locus coincide in 
the fibre above $P$ and so the fibre in $E_D$ becomes two curves of genus $0$
that intersect above the point.  The same thing would happen above $C_2$ if
we had blown it up first; but now the normal directions at $\tilde P$ in
$\tilde S_1, \tilde S_2$ no longer coincide, as that would imply a more
complicated singularity of $S_1 \cap S_2$ at $P$.  Thus the two points in
the branch locus in the fibre above $\tilde P$ remain distinct and we obtain
an irreducible curve.

Note further that the components in the fibre above $P$ in $E_D$ are the
only effective curves in their linear equivalence class, since they are
components of a fibre of a conic bundle.  This implies that
they are rational over $\F_p$ for all $p$ of good reduction, and in particular
that each has $p+1$ points.  

This applies equally well to the fibre above $P$ in the blowup of $C_2$, had
we blown up that curve first.  On the other hand, the fibre above $\tilde P$
in the blowup of $\tilde C_2$ has only $p+1$ points: thus, $p$ fewer than 
it would.  Since the two blowups are isomorphic if these fibres are deleted,
we conclude that the blowup of $\tilde C_2$ has $p$ fewer points than that of
$C_2$.  The statement about $\Pic E_{12}$ is now clear, since this group is
equal to the subgroup of $\Pic E_2$ generated by a section and all vertical
curves other than those above $P$.
\end{proof} 

%% Now suppose that we have two components $C_1, C_2$ of $S_1, S_2$ meeting
%% transversely at a point $p$.  Let us blow up $C_1$, obtaining $E_1, E_2$ as
%% before and $\tilde C_2$, the strict transform of $C_2$.  Then the normal 
%% directions to $C_1$ at $p$ in $S_1, S_2$
%% are the same, and coincide with the tangent direction to $C_2$ there, so 
%% $E_1 \cap E_2 \cap \tilde C_2$ contains a point above $p$.  This does
%% contribute to the Euler characteristic of $E_T$.  If these
%% directions are the same only to first order, however, %%% I need to check this!
%% then we have three curves of intersection of the branch locus meeting 
%% transversely at the point $p$, which does not affect the Euler characteristic.

%% To summarize, a transverse intersection of two singular curves at the branch
%% locus contributes $1$ to the Euler characteristic of the double cover of one
%% of the components, but nothing to the other one.  On the other hand, an 
%% intersection of three does not contribute, and there are no intersections of
%% four because they have already been blown up before we begin blowing up the
%% curves.  

We showed in Proposition \ref{count-octic} that $[D_B]_p - [H_3]_p = 12p^2$.
The singularities of the branch locus
consist of $28$ curves with $24, 7, 3$ intersection points of order $2,3,4$
and $3$ double points.  In Proposition \ref{count-four} we saw that blowing up
the fourfold intersections adds $3(p^2+2p)$ to the count, while Proposition
\ref{meet-point} together with Theorem \ref{all-pos} tells us that blowing up
the curves contributes $28(p^2+p) + 24p$.  Finally, we take a small resolution
of the $9$ ordinary double points ($2$ above each of the $3$ of $B$ and the
$3$ of the branch surface $S_5$), which adds $9p$ points.
We conclude that
$$[R]_p = [H_3]_p + 12p^2-3p + 3p^2+6p + 28(p^2+p)+24p+9p = [H_3]_p + 43p^2 + 64p$$
for all $p \ge 5$.
This completes the proof of theorem \ref{count-rigid}.
\qed

\section{Proving modularity}
Gouv\^ea-Yui and Dieulefait-Manoharmayum proved \cite{gouvea-yui}, 
\cite{dieu-man} that rigid Calabi-Yau
threefolds defined over $\Q$ are always modular. 
%% strictly we don't depend on this theorem but use Schutt to prove it
Later,
Dieulefait \cite{dieu} gave an algorithm to determine the
Hecke eigenform associated to a rigid Calabi-Yau.  However, his algorithm
requires knowing a basis for the space of newforms of weight $2$ and level
$\prod_i p_i^{a_i}$, where $p$ runs over the primes of bad reduction of the
threefold and $a_i$ is the bound on the valuation of the conductor of an
elliptic curve over $\Q$ at $p_i$.  For $Q_3$, for example, this bound would
be $2^8 3^5 13^2 > 10^7$, which is well beyond the range where such a basis
can be calculated.  So instead we use a method due to Serre and Sch\" utt
\cite{schutt}, inspired by ideas of Faltings.  We
will present the proof carefully, following \cite{schutt} closely, for the 
modular form of level $13$ and its corresponding Calabi-Yau, and then indicate 
the necessary changes to apply it in the other examples.

Sch\" utt assumes, following Serre, that we have two $\ell$-adic Galois 
representations $\rho_1, \rho_2$ unramified outside a finite set of primes $S$,
with the same determinant and for which the mod $\ell$ reductions are isomorphic
and absolutely irreducible.  In the applications, $\rho_1$ is the Galois
representation associated to a Hecke eigenform of weight $4$, while $\rho_2$
is the representation on $H^3$ of a rigid Calabi-Yau threefold (or more 
generally on $H^{3,0} + H^{0,3}$ of a Calabi-Yau which is not rigid but for which
this is nevertheless a component of $H^3_{\etale}$), and we take $\ell = 2$.  
For both representations, the determinant is the cube of the cyclotomic
character \cite{dieu-man}.  Also, $\rho_1$ is unramified away from $\ell$ and 
the primes dividing the level, while $\rho_2$ is unramified away from $\ell$
and the primes of bad reduction.

\begin{remark} As in \cite[Section 1.8.1]{meyer-diss},
  it is easy to determine $\tr \rho_2(\Frob_p)$ for $p \ge 17$
by counting points.  This is because its absolute value is at most $2p^{3/2}$,
while the sum of the traces on the other cohomology groups is of the form
$p^3+n(p^2+p)+1$.  So if $4p^{3/2} < p^2+p$ the point count determines $n$,
and this holds for $p \ge 17$.
\end{remark}

We now let $\rho_1$ be the representation attached to the newform of level
$13$ and weight $4$ and $\rho_2$ the representation on $H^3(R)$.  Our goal is
to prove the following theorem.

\begin{thm}\label{all-counts}
For all primes $p$ we have $\tr \rho_1(\Frob_p) = \tr \rho_2(\Frob_p)$.
\end{thm}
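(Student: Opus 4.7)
The plan is to apply Sch\"utt's refinement of the Faltings--Serre method at $\ell=2$. The hypotheses to verify are: (i) $\rho_1$ and $\rho_2$ are unramified outside a common finite set $S$; (ii) $\det\rho_1 = \det\rho_2$; and (iii) the mod $\ell$ reductions $\bar\rho_1$, $\bar\rho_2$ are isomorphic and absolutely irreducible. Item (ii) is immediate, both determinants being the cube of the cyclotomic character. For (i) I would argue from the defining equations of the Cynk--Szemberg resolution constructed in the proof of Theorem~\ref{is-cy} that the primes of bad reduction of $CY_{13}$ are contained in an explicit short list including $\{2,13\}$. If additional primes (say $3$, which appears in denominators throughout the construction) need to be added to $S$, this is harmless for the method, provided the corresponding contribution is absorbed on the $f_{13,4}$ side by noting that those primes give zero Fourier coefficient or can be handled by one additional test.

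For item (iii), the congruence $\tr\rho_2(\Frob_p) \equiv a_p(f_{13,4}) \pmod{2}$ at all primes of good reduction follows from Theorem~\ref{count-rigid} and Proposition~\ref{count-doct} after subtracting the contribution of the other cohomology groups of $CY_{13}$. Absolute irreducibility of $\bar\rho_1$ in $GL_2(\bar\F_2)$ can be certified by exhibiting a single prime at which the characteristic polynomial of $\bar\rho_1(\Frob_p)$ is irreducible over $\F_2$, ruling out a Borel image; the eigenvalues listed in Proposition~\ref{count-doct} should readily supply such a prime.

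Sch\"utt's criterion then asserts that $\rho_1 \cong \rho_2$ provided $\tr\rho_1(\Frob_p) = \tr\rho_2(\Frob_p)$ holds for every prime $p$ in an explicit finite test set $T$ determined by $S$, $\bar\rho$, and $\ell$. I would enumerate $T$ following the procedure of \cite{schutt} and check each required equality using the point count of $CY_{13}$, which by Theorem~\ref{count-rigid} reduces to counting points on $H_3$; the needed values are recorded in Proposition~\ref{count-doct}. The resulting isomorphism implies trace equality at every prime, including those in $S$, which is the conclusion of the theorem.

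The main obstacle will be the bookkeeping required to pin down $S$ and the test set $T$: one must compile and verify the list of primes at which Sch\"utt's criterion demands equality, and make sure they all lie inside the computational range already handled. A secondary subtlety is that $\ell=2$ carries a genuine risk that $\bar\rho$ fails to be absolutely irreducible, since $GL_2(\F_2) \cong S_3$ is small and Borel images are not unusual for weight~$4$ newforms of low level. Should this fail for $f_{13,4}$, the whole argument must be rerun at $\ell=3$, which would require counting points of $CY_{13}$ over $\F_{p^2}$ in addition to $\F_p$ in order to recover Frobenius traces modulo~$3$.
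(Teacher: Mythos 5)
Your proposal follows the paper's strategy essentially exactly: Sch\"utt's refinement of Faltings--Serre at $\ell=2$, with determinants both equal to the cube of the cyclotomic character, bad primes enumerated (the paper proves and uses a Gr\"obner-basis criterion, Lemma~\ref{bad-primes}, to pin down $S=\{2,3,13\}$), residual representations shown isomorphic and absolutely irreducible, and then trace equality checked at an explicit finite test set assembled from the Jones--Roberts tables. The bulk of the paper's proof is precisely the bookkeeping you defer: enumerating the seven $\S_4$-extensions and fourteen $\S_3\oplus\cyc_2$-extensions containing $F_6=\Q[x]/(x^3-x-2)$ and unramified outside $\{2,3,13\}$, and exhibiting for each a prime of the right Frobenius order among those already counted.

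One point where the reasoning as written has a gap. You claim the congruence $\tr\rho_2(\Frob_p)\equiv a_p\pmod 2$ ``at all primes of good reduction'' follows from Theorem~\ref{count-rigid} and Proposition~\ref{count-doct}; but the latter is only established for $5\le p\le 200$, so as stated this is circular (it presupposes agreement at all primes, which is what the theorem asserts). What one actually needs is that $\bar\rho_1$ and $\bar\rho_2$ are isomorphic, not merely trace-congruent on a finite list, and this is established by enumerating the finitely many $\S_3$- and $\cyc_3$-extensions of $\Q$ unramified outside $S$ and using a few Frobenius orders to pin down the field cut out by each representation --- in this case $F_6$ for both. Also, your concern that a reducible $\bmod\,2$ representation would force a switch to $\ell=3$ is well taken in general, and in fact the paper does hit reducible $\bmod\,2$ representations for levels $78$ and $390$, but there it switches to Livn\'e's criterion rather than to $\ell=3$; for level $13$ the representation is irreducible because $a_5$ is odd, so the issue does not arise.
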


We start by checking this for small primes:
\begin{lemma}\label{small-counts}
For all primes $p \in [17,50]$ we have 
$\tr \rho_1(\Frob_p) = \tr \rho_2(\Frob_p)$.
\end{lemma}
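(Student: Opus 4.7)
The strategy is to combine the closed-form point-count expression already in hand with the cohomological structure of $CY_{13}$. Substituting Proposition~\ref{count-doct} into Theorem~\ref{count-rigid} gives
\begin{equation*}
[CY_{13}]_p \;=\; p^3 + 49(p^2+p) + 1 - a_p
\end{equation*}
for every prime $p$ with $5\le p\le 200$, where $a_p$ denotes the $T_p$-eigenvalue of $f_{13,4}$. In particular this holds for every $p$ of interest.

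On the cohomological side, since $CY_{13}$ is a rigid Calabi--Yau threefold (Theorem~\ref{rigid13}) with $b_1=b_5=0$ and $\dim H^3=2$, the Lefschetz trace formula reads
\begin{equation*}
[CY_{13}]_p \;=\; 1 + p^3 + \tr(\Frob_p\mid H^2) + \tr(\Frob_p\mid H^4) - \tr\rho_2(\Frob_p).
\end{equation*}
Writing eigenvalues on $H^2$ as $p\alpha_i$ with $|\alpha_i|=1$ (Deligne) and using Poincar\'e duality $H^4\cong H^2(CY_{13})^{\vee}(-3)$, the eigenvalues on $H^4$ are $p^2\bar\alpha_i$; since $\tr(\Frob_p\mid H^2)=p\sum\alpha_i$ is a real integer, $\sum\alpha_i\in\R$, so $\tr(\Frob_p\mid H^4)=p\,\tr(\Frob_p\mid H^2)$. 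Under the further Tate-type assumption that the $\alpha_i$ are roots of unity --- natural for $CY_{13}$ since $h^{1,1}=49$ and the Cynk--Szemberg resolution of Theorem~\ref{is-cy} produces $49$ evident divisor classes --- one has $p\mid \tr(\Frob_p\mid H^2)$, so the combined $H^2$-and-$H^4$ contribution is $n(p)(p^2+p)$ for some $n(p)\in\Z$.

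Equating the two expressions for $[CY_{13}]_p$ yields $(n(p)-49)(p^2+p) = \tr\rho_2(\Frob_p) - a_p$. The Deligne bound on $\rho_2$ and the Ramanujan bound on $a_p$ give $|\tr\rho_2(\Frob_p)-a_p|\le 4p^{3/2}$, while $4p^{3/2} < p^2+p$ precisely when $p\ge 17$; this forces $n(p)=49$ and $\tr\rho_2(\Frob_p)=a_p$ for every $p\in[17,50]$, which is the lemma.

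The main --- and rather mild --- obstacle is justifying the Tate-type claim that the $H^2$ Frobenius eigenvalues are $p$ times roots of unity, equivalently that $p\mid\tr(\Frob_p\mid H^2)$; this is exactly the assertion implicit in the remark preceding the lemma. I expect it to follow by checking that the $49$ divisor classes produced by the Cynk--Szemberg resolution --- the pullback of a hyperplane, the exceptional divisors above the various points and curves, and the pullbacks of branch components --- span $H^2$ and are defined over $\Q$ or at worst over small abelian extensions whose character sums are rational integers. Once this is granted, the argument is a one-line size bound, so no further difficulty is anticipated.
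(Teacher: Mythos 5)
Your proof takes the same route as the paper's: the paper's terse ``consult tables, count points, apply Theorem~\ref{count-rigid}'' implicitly relies on the remark immediately preceding the lemma (that the non-$H^3$ contribution has the shape $p^3 + n(p^2+p) + 1$ and that $4p^{3/2} < p^2+p$ for $p \ge 17$), and you have simply unfolded that chain explicitly via Proposition~\ref{count-doct}, the Lefschetz trace formula, and the size bound. The Tate-type assumption you flag ($p \mid \tr(\Frob_p \mid H^2)$) is exactly what the paper's remark presupposes without comment; it is justified carefully only afterward in Proposition~\ref{point-count}, whose Picard-group argument is independent of Theorem~\ref{all-counts} and so could have been cited here without circularity, meaning your proposal is at the same level of rigor as the paper's proof.
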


\begin{proof} We consult \cite[Appendix C]{meyer-diss} for the
$\tr \rho_1(\Frob_p)$
and calculate the $\tr \rho_2(\Frob_p)$ by counting points and using Theorem
\ref{count-rigid}.
\end{proof}

In order to prove anything about $\rho_2$, we will need to know its ramified
primes, which requires us to determine a set containing the primes of bad 
reduction of $CY_{13}$.  These are contained in the set of primes of bad reduction of
$H_3$, which in turn are a subset of the primes contained in an associated 
prime of the ideal defining the singular subscheme of an integral model.

The following statement is presumably well-known; a proof is provided here
for lack of an adequate reference.

\begin{lemma}\label{bad-primes}
Let $I \subset \Z[x_0,\ldots,x_n]$ be an ideal and $B$ a Gr\" obner basis for 
$I$.  Let $p$ be a prime that does not divide the leading coefficient of any
element of $B$.  Then no associated prime of $I$ contains $p$.
\end{lemma}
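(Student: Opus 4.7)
The plan is to prove the contrapositive: the hypothesis on $B$ forces $p$ to be a non-zero-divisor on $R/I$, where $R = \Z[x_0,\dots,x_n]$. This suffices because $p$ lies in an associated prime of $I$ iff $p$ is a zero divisor on $R/I$, and any such associated prime $\mathfrak p$ has $\mathfrak p \cap \Z = (p)$; so to forbid $p \in \mathfrak p = \mathrm{Ann}_R(\bar f)$ it is enough to show that for every $f \in R$ with $pf \in I$, there exists $s \in \Z$ coprime to $p$ with $sf \in I$ (such an $s$ would lie in $\mathfrak p$, contradicting $\mathfrak p \cap \Z = (p)$).

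To produce such an $s$, I would pass to $R_p := \Z_{(p)}[x_0,\dots,x_n]$. Each leading coefficient of $B$ is now a unit, so after rescaling I may assume every $b \in B$ has monic leading term, and a direct check shows that $B$ remains a Gr\"obner basis for $IR_p$ over $R_p$. With monic leading terms and scalars in a domain, the field-coefficient theory applies verbatim: every $g \in R_p$ has a unique normal form $r \in R_p$ characterized by $g - r \in IR_p$ and the condition that no monomial of $r$ be divisible by the leading monomial of any $b \in B$. In particular, $g \in IR_p$ iff its normal form vanishes. Given $f \in R$ with $pf \in I$, let $r$ be the normal form of $f$ in $R_p$; then $pr \in IR_p$, but $pr$ has the same monomial support as $r$ (multiplication by the nonzero scalar $p$ in the domain $\Z_{(p)}$ changes coefficients, not monomials), so $pr$ is also in normal form. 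Uniqueness forces $pr = 0$, hence $r = 0$ and $f \in IR_p$; clearing denominators produces the desired $s \in \Z \setminus p\Z$ with $sf \in I$.

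The hard part is conceptual rather than computational: Gr\"obner-basis theory over $\Z$ is genuinely subtler than over a field, because the leading coefficient of a remainder need not be divisible by the leading coefficient of any $b \in B$ and the naive division algorithm can stall. The hypothesis on $B$ is designed precisely to sidestep this issue at the prime $p$: once one inverts the integers prime to $p$, the leading coefficients of $B$ become units and one can copy a standard field-coefficient argument. No deep input is needed beyond the uniqueness of normal forms with respect to a Gr\"obner basis over a domain in which the leading coefficients are invertible.
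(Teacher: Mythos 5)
The proposal is correct, and the overall strategy matches the paper's: both reduce the claim to the assertion that $p$ is a non-zero-divisor on $R/I$, and both then lean on the fact that the coprimality hypothesis keeps $p$ out of the denominators during Gr\"obner division. The executions differ, though. For the reduction step, the paper cites Eisenbud's Exercise 15.22 to pass to the statement $I = (I:p^\infty)$, whereas you argue directly from the definition of an associated prime as $\mathrm{Ann}_R(\bar f)$ together with $\mathfrak p \cap \Z = (p)$; yours is more self-contained (though to be picky, calling it the ``contrapositive'' is loose --- you are proving an equivalent reformulation, not the contrapositive). For the main step, the paper stays over $\Z$, runs the division algorithm on $f_0 = p^n x$, and tracks inductively that $p^n$ divides the coefficient of each quotient monomial $q_i$, concluding that $x = \sum (q_i/p^n)b_i \in I$. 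You instead localize at $p$, which makes the leading coefficients of $B$ units, so the field-coefficient theory of unique normal forms applies and the conclusion falls out structurally from ``multiplying by a nonzero scalar preserves being in normal form, so $pr$ reducing to $0$ forces $r = 0$.'' The two calculations encode the same information, but yours isolates the reason more cleanly and avoids the coefficient-chase. The one spot that deserves a line of justification is your ``direct check'' that $B$ remains a Gr\"obner basis for $IR_p$: this does hold, since any $g \in IR_p$ can be written $h/s$ with $h \in I$ and $s$ prime to $p$, whence $\mathrm{lt}(g) = \mathrm{lt}(h)/s$ lies in the $R_p$-ideal generated by the $\mathrm{lt}(b)$; and once those leading coefficients are units, a Gr\"obner basis is automatically a strong one, so divisibility of leading terms and uniqueness of remainders behave exactly as over a field. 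Note that both proofs, yours and the paper's, implicitly require a strong enough Gr\"obner basis over $\Z$ (so that the divisibility step in the division algorithm always applies); this is the convention Magma uses.
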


\begin{proof}
By setting $x = 1, y = p$ in \cite[Exercise 15.22]{eisenbud}, 
we see that it suffices to show that 
$I = (I:p^\infty)$.  Suppose that $f_0 = p^nx \in I$.  Inductively choose 
$b_i \in B$ whose leading term divides that of $f_i$ and monomials $q_i$
such that $f_{i+1} = f_i - q_i b_i$ has leading term smaller in the monomial 
order than the leading term of $f_i$.  By hypothesis $p$ does not divide the 
leading coefficient of $b_i$, so $p^n$ divides the leading coefficient of $q_i$.
Since $f_0 \in I$, this process must terminate
in $0$ (\cite[Algorithm 15.7]{eisenbud}).
Thus we have written $f_0 = \sum_i q_i b_i$, where $b_i \in I$ and $p^n | q_i$.
It follows that $x \in I$ as desired.
\end{proof}

\begin{remark}
The converse of this statement is false; the set of primes dividing
the leading coefficient of an element of $B$ depends on the term order,
while the set of associated primes depends only on the ideal.
\end{remark}

Using Magma to find a Gr\" obner basis for the ideal generated by the partial
derivatives, we see that $H_3$ has good reduction away from 
$\{2,3,5,13\}$, and then we check that the reduction mod $5$ of the singular
subscheme of $H_3$ is equal to the singular subscheme of $H_3$ base changed
to $\F_5$, so that $5$ is in fact a prime of good reduction.  We conclude that
$\rho_2$ is unramified outside $\{2,3,13\}$.

Our first task is to determine the mod $2$ representations $\bar \rho_i$.
Note that the trace of $\alpha \in GL_2(\F_2)$ is $0$ if $\alpha$ has order
$1$ or $2$ and $1$ if $\alpha$ has order $3$.  Taking $\alpha = \Frob_p$,
this says that $\tr \Frob_p$ is $0$ if the residue field of a prime above
$p$ in the field cut out by $\bar \rho_i$ has order $p$ or $p^2$ and $1$ if
the order is $p^3$.

Also, $\alpha$ has a fixed
$1$-dimensional subspace if and only if its order is $1$ or $2$.  It follows
immediately that a representation
to $GL_2(\F_2)$ is absolutely irreducible if and only if its trace is
not identically $0$.  This condition holds in our example, because both
$a_5$ of the modular form and the number of points mod $5$ are odd.

\begin{defn} Let $F_3 = \Q[x]/(x^3-x-2)$ and let $F_6$ be its Galois closure.
\end{defn}

\begin{prop}
Both $\bar \rho_1$ and $\bar \rho_2$ are the projection
$\Gal(\bar \Q/\Q) \to \Gal(F_6/\Q)$ followed by an isomorphism
$\Gal(F_6/\Q) \cong GL_2(\F_2)$.
\end{prop}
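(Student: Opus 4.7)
The plan is to reduce each of the two statements to a finite check. Since the mod-$2$ cyclotomic character is trivial, both $\bar\rho_i$ have trivial determinant and therefore factor through a subgroup of $SL_2(\F_2) = GL_2(\F_2) \cong S_3$. The text already notes that each $\bar\rho_i$ is absolutely irreducible, which excludes any image contained in a Borel subgroup; the only remaining possibilities are $A_3$ and $S_3$.

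The kernel of $\bar\rho_i$ cuts out either a cyclic cubic extension (image $A_3$) or the Galois closure of a non-Galois cubic (image $S_3$). I would constrain the ramification: for $\bar\rho_1$ it is contained in $\{2, 13\}$ (the level of $f_{13,4}$ together with $\ell = 2$), and for $\bar\rho_2$ in $\{2, 3, 13\}$ by the bad-prime analysis furnished by Lemma \ref{bad-primes} applied to $H_3$. Using \cite{tables} I would enumerate the finitely many cubic fields with discriminant supported on $\{2, 3, 13\}$; since $F_3 = \Q[x]/(x^3 - x - 2)$ has discriminant $-104 = -2^3 \cdot 13$, it is one of the candidates, and its Galois closure is $F_6 = F_3 \cdot \Q(\sqrt{-26})$.

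To pin the kernel field down as $F_6$, I would compare, for a handful of small primes $p$, the factorization type of $p$ in each candidate cubic with the conjugacy class of $\bar\rho_i(\Frob_p)$ in $S_3$. By the trace dictionary in the text, $\tr \bar\rho_i(\Frob_p) \equiv 1 \pmod 2$ exactly when $\Frob_p$ is a $3$-cycle, i.e.\ when $p$ is inert in the cubic; otherwise $p$ splits as $(1,1,1)$ or $(1,2)$, distinguished by whether $p$ splits or is inert in the quadratic resolvent $\Q(\sqrt{-26})$. For $\bar\rho_1$ the relevant traces are $a_p \bmod 2$ from \cite[Appendix C]{meyer-diss}, and for $\bar\rho_2$ they come from point counts via Theorem \ref{count-rigid}; agreement at sufficiently many small primes uniquely selects $F_3$ from the list, forces the image to be $S_3$ (ruling out the cyclic case $A_3$), and exhibits the required isomorphism $\Gal(F_6/\Q) \cong GL_2(\F_2)$.

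The main obstacle is ensuring that the enumeration of candidate cubics is complete and that a small batch of primes suffices to separate $F_3$ from its competitors; once \cite{tables} is invoked, both tasks reduce to routine computations, and the narrowness of the ramification set $\{2,3,13\}$ keeps the candidate list short enough to verify by hand.
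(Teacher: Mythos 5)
Your plan is correct and matches the paper's proof in all essentials: both enumerate the $C_3$ and $S_3$ extensions of $\Q$ unramified outside $\{2,3,13\}$ via the Jones--Roberts tables, then use the trace-mod-$2$ values of Frobenius at a few small primes (the paper takes $p=17,19,23,29$) to select $F_6$ as the unique consistent kernel field. The paper gives this more tersely, while you spell out the dictionary between traces, splitting types in the cubic, and the quadratic resolvent $\Q(\sqrt{-26})$, but there is no difference in method.
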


\begin{proof}
The tables of Jones and Roberts \cite{tables}
give all extensions of $\Q$ with Galois group $C_3, \S_3$ unramified outside
$S$.  We find that $\tr \bar \rho_i(\Frob_p) = 1$ for $p = 17$ and
$0$ for $p = 19, 23, 29$.  The only choice for $\ker \rho_i$
consistent with this is $\Gal(\bar \Q/F_6)$.
\end{proof}

\begin{defn}
Let $\sigma_1, \sigma_2$ be two Galois representations to $GL_r(\Z_\ell)$
with the same determinant whose reductions mod $\ell^n$ are equal for some
$n>0$.  We define the {\em discrepancy function}
$d_n: \Gal (\bar\Q/\Q) \to \F_\ell$ by
$d_n(\alpha) = (\tr \sigma_1(\alpha) - \tr \sigma_2(\alpha))/\ell^n$.
Further, we define the {\em discrepancy representation}
$$\delta_n: \Gal (\bar \Q/\Q) \to M_r^{\tr = 0}(\F_\ell) \rtimes GL_r(\F_\ell)$$
as follows.  Let $\mu(\alpha)$ be the matrix such that
$\sigma_1(\alpha) = (1+\ell^n \mu(\alpha)) \sigma_2(\alpha)$.  Then
$\delta_n(\alpha) = (\mu(\alpha),\bar \sigma_1(\alpha))$.
\end{defn}

\begin{remark} These objects are defined in \cite[Section 5]{schutt}
(the only difference is in the use of subscripts to facilitate the
statement of Corollary \ref{lift-same} below),
but Sch\" utt does not give English names to them.
Of course $d_n$ is identically $0$ if and only if the traces are congruent
mod $\ell^{n+1}$.  It is difficult to work with $d_n$ directly because it does not
have useful algebraic properties.
\end{remark}

\begin{prop}[{\cite[Section 5]{schutt}}]
$\delta_n$ is a continuous homomorphism unramified at every prime
where $\sigma_1, \sigma_2$ are unramified, and
$d_n(\alpha) = \tr (\mu_\alpha \bar \sigma_1(\alpha))$.  In the
case of interest $\ell = r = 2$, the target group is isomorphic to
$\S_4 \oplus \cyc_2$, and $\tr (M_1M_2) = 1$ if and only if the order of
$(M_1,M_2)$ in the semidirect product is $4$ or $6$. \qed
\end{prop}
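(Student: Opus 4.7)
The plan is to verify the four parts of the proposition (homomorphism, unramifiedness, trace formula, and the structural description in the case $\ell=r=2$) in turn, treating the last as the most bookkeeping-intensive.

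First, for the homomorphism property, I would substitute $\alpha\beta$ into the defining relation $\sigma_1 = (1+\ell^n\mu)\sigma_2$ and compare with the product of the corresponding relations for $\alpha$ and $\beta$. The cross term $\ell^{2n}\mu(\alpha)\sigma_2(\alpha)\mu(\beta)\sigma_2(\beta)$ vanishes modulo $\ell^{n+1}$ (valid for $n\geq 1$), and reducing mod $\ell$ via $\sigma_2 \equiv \sigma_1$ yields
\[
\mu(\alpha\beta) \equiv \mu(\alpha) + \bar\sigma_1(\alpha)\,\mu(\beta)\,\bar\sigma_1(\alpha)^{-1} \pmod \ell,
\]
which is exactly the first coordinate of the multiplication in $M_r^{\tr=0}(\F_\ell) \rtimes GL_r(\F_\ell)$ under conjugation. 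Trace-zeroness of $\mu$ modulo $\ell$ follows from $\det(1+\ell^n M) \equiv 1 + \ell^n \tr M \pmod{\ell^{2n}}$ together with $\det\sigma_1=\det\sigma_2$. Continuity is automatic, and since $\delta_n$ factors through the pair $(\mu,\bar\sigma_1)$, it is unramified at every prime at which both $\sigma_1$ and $\sigma_2$ are. For the trace identity, I would subtract the two sides of $\sigma_1 = (1+\ell^n\mu)\sigma_2$, take $\tr$, divide by $\ell^n$ and reduce mod $\ell$, replacing $\sigma_2$ by $\bar\sigma_1$.

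For the structural claim in the case $\ell=r=2$, I would observe that $M_2^{\tr=0}(\F_2)$ is an $\F_2$-vector space of order $8$ containing the identity matrix $I$ as a central (conjugation-fixed) element, with quotient isomorphic to the Klein four-group on which $GL_2(\F_2)\cong\S_3$ acts by permuting the three nonzero elements (the standard $2$-dimensional representation). The classical description $V_4\rtimes\S_3 \cong \S_4$ together with the direct factor $\langle I\rangle$ yields $\S_4\oplus\cyc_2$, of order $48$.

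Finally, the trace criterion I would establish by case analysis on $\mathrm{ord}(M_2)\in\{1,2,3\}$. For $M_2=I$ the element $(M_1,I)$ has order at most $2$ in characteristic $2$ and $\tr(M_1 I)=\tr M_1=0$. For $M_2$ of order $2$, $(M_1,M_2)^2=(M_1+M_2M_1M_2^{-1},I)$, so the order is $2$ exactly when $M_1$ commutes with $M_2$ (in which case a direct computation on one representative gives $\tr(M_1M_2)=0$) and $4$ otherwise (trace $1$). For $M_2$ of order $3$, the averaging operator $1+\mathrm{Ad}(M_2)+\mathrm{Ad}(M_2^2)$ on $M_2^{\tr=0}(\F_2)$ has image equal to the one-dimensional fixed space $\langle I\rangle$ (the only trace-zero matrices commuting with a cyclic-order-$3$ element of $GL_2(\F_2)$ are $aI$, and the trace condition forces $a\in\F_2$), so $(M_1,M_2)^3$ is either $(0,I)$ (giving order $3$, for four values of $M_1$) or $(I,I)$ (order $6$, for the other four), with traces $0$ and $1$ respectively, again checked on a single representative of each. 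The main obstacle is simply keeping this case analysis organized and double-checking the element counts against those of $\S_4\oplus\cyc_2$; no individual step is conceptually difficult, and the essential insight is the identification of the target group.
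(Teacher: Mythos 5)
The paper gives no proof of this proposition; it is stated with a citation to Sch\"utt and closed with \qed, so there is nothing internal to compare against. Your reconstruction is the standard argument and is essentially correct: the cocycle computation giving $\mu(\alpha\beta) \equiv \mu(\alpha) + \bar\sigma_1(\alpha)\mu(\beta)\bar\sigma_1(\alpha)^{-1}$, the determinant argument for $\tr \mu \equiv 0$, the derivation of $d_n(\alpha) = \tr(\mu_\alpha \bar\sigma_1(\alpha))$, and the order-by-order trace criterion all go through.

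The one step you treat as obvious that actually requires justification is the assertion that $\langle I\rangle$ is a \emph{direct} factor. That claim is equivalent to the splitting of the central extension
$1 \to \cyc_2 \to M_2^{\tr=0}(\F_2) \rtimes GL_2(\F_2) \to \S_4 \to 1$,
and this is not automatic: $GL_2(\F_3)$ and the binary octahedral group are non-split central $\cyc_2$-extensions of $\S_4$, so identifying the quotient as $\S_4$ does not by itself finish the job. Here the extension does split, and the cleanest reason is that $M_2^{\tr=0}(\F_2)$ is the $\F_2$-permutation module of $GL_2(\F_2) \cong \S_3$ on the three nonzero nilpotent matrices $E_1, E_2, E_3$ (which sum to $I$); over $\F_2$ this module decomposes as $\langle I\rangle \oplus A$, where $A$ is the augmentation kernel $\{a_1E_1+a_2E_2+a_3E_3 : \sum a_i = 0\}$, and $A$ is a $GL_2(\F_2)$-stable complement. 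Then $A \rtimes GL_2(\F_2) \cong V_4 \rtimes \S_3 \cong \S_4$ is an honest complementary subgroup of the semidirect product. A similar small gap appears in the order-$3$ case: reducing the trace verification to a single representative is legitimate only after noting that both $M_1 \mapsto \tr(M_1M_2)$ and the averaging map are $\F_2$-linear in $M_1$, so their kernels coincide once a single nonzero value is checked. Neither point is difficult, but as written they are real holes rather than routine bookkeeping.
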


\begin{cor}\label{lift-same}
Let $\rho_1, \rho_2$ be Galois representations to $GL_2(\Z_2)$, unramified
outside $S$, whose 
residual representations are equal and surjective
and cut out the field $F$.  Suppose that
the traces of $\rho_1, \rho_2$ are congruent mod $\ell^n$.  Let
$E_F$ be the set of all extensions of $\Q$ with Galois group
$\S_4$ or $\S_3 \oplus \cyc_2$ that are unramified outside $S$ and contain $F$.
Suppose that for every $E_i \in E_F$ there is a prime $p_i$ of inertial degree
$4$ or $6$ such that 
$\tr \rho_1(\Frob_p) \equiv \tr \rho_2(\Frob_p) \bmod \ell^{n+1}$.  Then
the traces of $\rho_1, \rho_2$ are congruent mod $\ell^{n+1}$.
\end{cor}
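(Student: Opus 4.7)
I will prove the contrapositive: assume $d_n \not\equiv 0$ and derive a contradiction with the hypothesis. By the proposition just before the corollary, for any $\alpha \in \Gal(\bar\Q/\Q)$ the value $d_n(\alpha) = \tr(\mu_\alpha \bar\rho_1(\alpha))$ is nonzero precisely when $\delta_n(\alpha)$ has order $4$ or $6$ in the target group $\S_4 \oplus \cyc_2$. So our assumption means the image of $\delta_n$ contains an element of one of these orders, and in particular strictly contains the image $\S_3$ of $\bar\rho_1$.

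Let $E$ be the fixed field of $\ker \delta_n$; because $\delta_n$ is continuous and unramified outside $S$, $E/\Q$ is a finite Galois extension unramified outside $S$. Composing $\delta_n$ with the semidirect-product projection $\S_4 \oplus \cyc_2 \to GL_2(\F_2)$ recovers $\bar\rho_1$, whose fixed field is $F$, so $F \subseteq E$. The image of $\delta_n$ is therefore a subgroup of $\S_4 \oplus \cyc_2$ strictly containing and surjecting onto $\S_3$, and a short check shows the only possibilities are $\S_3 \oplus \cyc_2$, $\S_4$, or the whole $\S_4 \oplus \cyc_2$. In the first two cases $E$ itself lies in $E_F$; in the third, the normal subgroups $1 \times \cyc_2$ and $V_4 \times 1$ of $\S_4 \oplus \cyc_2$ cut out intermediate fields of $E$ with Galois groups $\S_4$ and $\S_3 \oplus \cyc_2$ over $\Q$, each containing $F$ and unramified outside $S$, and hence lying in $E_F$.

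Fix any $\tilde E \in E_F$ contained in $E$. The hypothesis supplies a prime $p$, of good reduction for $\rho_1$ and $\rho_2$ and unramified in $E$, with inertial degree $4$ or $6$ in $\tilde E$ and satisfying $d_n(\Frob_p) = 0$. The order of $\Frob_p$ in $\Gal(E/\Q)$ is a multiple of its order in the quotient $\Gal(\tilde E/\Q)$. But every element of $\S_4 \oplus \cyc_2$ has order $\mathrm{lcm}(a,b)$ with $a \in \{1,2,3,4\}$ and $b \in \{1,2\}$, giving only the possible orders $1,2,3,4,6$; in particular $8$ and $12$ do not occur. Hence $\Frob_p \in \Gal(E/\Q)$ itself has order $4$ or $6$, and the preceding proposition forces $d_n(\Frob_p) = 1$, contradicting $d_n(\Frob_p) = 0$.

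The main subtlety is the last bookkeeping step when $\Gal(E/\Q) = \S_4 \oplus \cyc_2$, where one must verify that information from a proper subfield propagates to $E$ itself; the key point is the absence of elements of order $8$ or $12$ in $\S_4 \oplus \cyc_2$. Everything else is a direct application of the discrepancy-representation framework of \cite[Section 5]{schutt}, with no Chebotarev density invocation needed because the hypothesis already hands us the required Frobenius elements.
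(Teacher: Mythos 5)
Your proof is correct, and it is in fact a streamlined version of the paper's argument. Both proofs identify the field $E$ cut out by the discrepancy representation $\delta_n$, invoke the classification of its Galois group as $\S_4 \oplus \cyc_2$, $\S_4$, or $\S_3 \oplus \cyc_2$, descend to a member $\tilde E$ of $E_F$, and reach a contradiction from a Frobenius element of order $4$ or $6$. Where you differ is in how the contradiction is extracted: the paper uses Chebotarev to manufacture a prime $q$ with $d_n(\Frob_q) = 1$ and then observes that $\S_4$ and $\S_3 \oplus \cyc_2$ have a unique conjugacy class of order $4$ or $6$, so that the hypothesis-supplied prime $q'$ must be conjugate to $q$, whereas you cut directly to the point. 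The crucial step in either version is that a Frobenius of order $4$ or $6$ in a quotient $\Gal(\tilde E/\Q)$ already has order exactly $4$ or $6$ in $\Gal(E/\Q)$, and you isolate the clean reason for this — $\S_4 \oplus \cyc_2$ has no elements of order $8$ or $12$ — which is implicit but not stated in the paper. Your argument thus dispenses with Chebotarev and the conjugacy observation entirely, since the hypothesis already hands you a Frobenius element with $d_n = 0$ that the order analysis forces to have $d_n = 1$. This is a genuine, if modest, simplification of the paper's route; both proofs are sound.
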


\begin{proof}
Let $F'$ be the field cut out by $\delta_n$.  If the traces are not congruent
mod $\ell^{n+1}$, then $F' \ne F$.  But $F \subset F'$ and $\Gal(F'/\Q)$ is
a subgroup of $\S_4 \oplus \cyc_2$ mapping surjectively to $\S_3$, 
so as Sch\" utt shows it is isomorphic to one of 
$\S_4 \oplus \cyc_2, \S_4, \S_3 \oplus \cyc_2$.  In the first case, replace $F'$
by the subfield fixed by $\cyc_2$.  Now the Chebotarev density theorem gives
us a prime $q$ for which the order of Frobenius in $\Gal F'/\Q$ is $4$ or $6$
and hence $d_n(\Frob_q) = 1$.  However, our hypothesis gives us a prime
$q'$ with Frobenius of the same order and $d_n(\Frob_{q'}) = 0$.  Since 
$\S_4$ and $\S_3 \oplus \cyc_2$ have only one conjugacy class of order $4$ or
$6$, in fact $\Frob_q$ and $\Frob_{q'}$ are conjugate, a contradiction.
\end{proof}

\begin{remark} The proof is the same as that used by Sch\" utt to conclude
that $\tr \rho_1 = \tr \rho_2$ if the traces of Frobenius are the same for
the given set of primes.  The slightly more precise statement given here
is not needed in the present paper.
\end{remark}

\begin{prop}\label{seven-exts}
There are $7$ extensions of $\Q$ with Galois group $\S_4$ containing $F_6$
and unramified outside $\{2,3,13\}$.  For each one, at least one of
$19, 23, 41$ has Frobenius of order $4$.
\end{prop}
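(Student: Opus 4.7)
The plan is to reduce the problem to a computation using the Jones--Roberts database \cite{tables} of number fields with prescribed Galois group and restricted ramification. An $\S_4$ extension $K/\Q$ containing $F_6$ corresponds uniquely to a non-Galois quartic field $K_4$ whose Galois closure is $K$ and whose cubic resolvent is $F_3$, since the quotient of $\S_4$ by its normal Klein four-subgroup is $\S_3$ and the fixed field of this $V_4$ is exactly $F_6$. Thus the plan is to enumerate all non-Galois quartic fields $K_4/\Q$ with Galois group $\S_4$ whose discriminant involves only the primes $\{2,3,13\}$ and whose cubic resolvent equals $F_3 = \Q[x]/(x^3-x-2)$, then confirm the count is $7$.

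First I would query \cite{tables} for all quartic $\S_4$ fields unramified outside $\{2,3,13\}$ (the list is finite because the conductor--discriminant bound combined with local considerations at each ramified prime restricts the absolute discriminant). Next I would compute the cubic resolvent of each candidate's defining polynomial in Magma and retain only those isomorphic to $F_3$; equivalently, I could fix a degree--$4$ polynomial $f$ with the correct Galois group and resolvent and vary it over the finite set of such fields. This step should produce exactly seven fields, which together with $F_6$ itself exhausts the subfields of the compositum of all such $K$.

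For the second assertion, for each of the seven fields $K_i$ I would pick a monic integer defining polynomial $g_i(x)$ of the corresponding quartic $K_{4,i}$ and factor $g_i$ modulo $19$, $23$, and $41$ (all of which are unramified in each $K_i$ since they lie outside $\{2,3,13\}$). The order of $\Frob_p$ in $\S_4$ is the least common multiple of the degrees of the irreducible factors, so $\Frob_p$ has order $4$ precisely when $g_i$ remains irreducible modulo $p$. The claim is that for each $i$, at least one of the three primes gives an irreducible reduction; this is a finite check in $3 \times 7 = 21$ factorizations.

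The main obstacle is trusting the completeness of the database for this discriminant range, but this is already an explicit hypothesis of Theorem \ref{main}; apart from that, the entire proof reduces to a Magma computation whose success can be inspected directly. If some $K_i$ happens to have none of $19, 23, 41$ splitting as an inert prime, the proposition would fail and one would need to enlarge the auxiliary set of primes used in the conclusion; the choice of $\{19,23,41\}$ is presumably dictated precisely by the outcome of this computation.
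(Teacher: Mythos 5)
Your proposal matches the paper's proof essentially verbatim: the paper also queries the Jones--Roberts tables for $\S_4$ extensions unramified outside $\{2,3,13\}$ (finding $150$), filters by cubic resolvent isomorphic to $F_3$ to obtain the $7$ fields, and checks the Frobenius-of-order-$4$ condition by testing inertness in the non-Galois quartic subfield at each of $19$, $23$, $41$. One small clarification: the completeness hypothesis appearing in Theorem~\ref{main} concerns only cubic fields unramified outside $\{2,3,5,13,23\}$ and is needed only for the level-$390$ form, not here; for $\S_4$ fields ramified only at $\{2,3,13\}$ the Jones--Roberts tables are complete unconditionally, which is why the level-$13$ result is stated without hypotheses.
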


\begin{proof}
From the tables of \cite{tables} we find that there are $150$ extensions of
$\Q$ with Galois group $\S_4$ unramified outside $\{2,3,13\}$.  However, 
by calculating cubic resolvents we find that only $7$ of these contain $F_6$.
It is routine to verify the second statement, since the Frobenius in the
$\S_4$-extension has order $4$ if and only if the prime is inert in the
non-Galois quartic subfield.
\end{proof}
% x^4 - 4*x + 1, x^4 - 8*x^2 - 8*x + 2, x^4 - 2*x^3 + 8*x^2 + 6*x - 17,
% x^4 - 4*x^2 - 8*x - 6, x^4 - 52*x - 143, x^4 - 104*x - 78, 
% x^4 - 52*x^2 - 104*x + 234

\begin{prop}\label{unram} There are $14$ extensions of $\Q$ with Galois group 
$\S_3 \oplus \cyc_2$ containing $F_6$ and unramified outside $\{2,3,13\}$.  For
each one, at least one of $17, 37, 43$ has Frobenius of order $6$.
\end{prop}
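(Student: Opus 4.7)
The plan is to mirror the argument of Proposition \ref{seven-exts}. Any extension $L/\Q$ with $\Gal(L/\Q) \cong \S_3 \oplus \cyc_2$ contains a unique $\S_3$-subextension (the fixed field of the $\cyc_2$ factor); if $F_6 \subset L$, this subextension must be $F_6$, so $L = F_6 \cdot K$ for a quadratic field $K$ linearly disjoint from $F_6$. The unique quadratic subfield of $F_6$ is the one generated by the discriminant of $x^3 - x - 2$, which is $-104$, so the quadratic subfield is $\Q(\sqrt{-104}) = \Q(\sqrt{-26})$. Thus the extensions $L$ in question correspond bijectively to the quadratic fields unramified outside $\{2,3,13\}$ and distinct from $\Q(\sqrt{-26})$.

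Enumerating directly, the quadratic fields unramified outside $\{2,3,13\}$ are the $15$ fields $\Q(\sqrt d)$ for $d$ in $\{-1, \pm 2, \pm 3, \pm 6, \pm 13, \pm 26, \pm 39, \pm 78\}$; removing $d = -26$ leaves the $14$ extensions claimed.

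A Frobenius element of $\S_3 \oplus \cyc_2$ has order $6$ if and only if its projection to $\S_3$ is a $3$-cycle and its projection to $\cyc_2$ is nontrivial. The first condition is independent of $K$ and says that $p$ is inert in $F_3$, equivalently that $x^3 - x - 2$ is irreducible mod $p$; I would verify this for each of $p = 17, 37, 43$ by a direct mod-$p$ reduction. The second condition says that $p$ is inert in $K = \Q(\sqrt d)$, i.e.\ that the Kronecker symbol $\bigl(\tfrac{d}{p}\bigr)$ equals $-1$. The proposition therefore reduces to checking, for each of the $14$ admissible values of $d$, that at least one of $\bigl(\tfrac{d}{17}\bigr)$, $\bigl(\tfrac{d}{37}\bigr)$, $\bigl(\tfrac{d}{43}\bigr)$ is $-1$; this is a short tabulation via quadratic reciprocity and the multiplicativity of the Kronecker symbol. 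I do not anticipate any real obstacle beyond the bookkeeping: the three primes have evidently been chosen so that the resulting Kronecker symbols collectively distinguish all of the relevant quadratic characters. The main thing to be careful about is confirming that the list of $14$ values of $d$ is exhaustive (i.e.\ that no admissible $K$ is overlooked), which is immediate from the explicit enumeration above.
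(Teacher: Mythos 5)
Your argument is essentially the paper's: write $L = F_6 K$ with $K$ quadratic, linearly disjoint from $F_6$, unramified outside $\{2,3,13\}$; identify the quadratic subfield of $F_6$ as $\Q(\sqrt{-26})$; reduce the order-$6$ Frobenius condition to ``$p$ inert in $F_3$ and $(d/p)=-1$''; and tabulate at $17, 37, 43$. No genuine divergence in method.

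One flaw worth flagging, which is actually inherited from the paper's own statement: the map $K \mapsto F_6K$ is two-to-one, not a bijection as you assert. Since $\sqrt{-26}\in F_6$, one has $F_6(\sqrt{d}) = F_6(\sqrt{-26d})$; group-theoretically, $\S_3\oplus\cyc_2$ has \emph{two} index-$2$ subgroups isomorphic to $\S_3$ (the factor $\S_3\oplus\{0\}$ and the graph of the sign character), so each $L$ contains two quadratic subfields linearly disjoint from $F_6$ and is hit twice by your correspondence. The $14$ admissible values of $d$ therefore pair off, $d \leftrightarrow -26d$, into only $7$ distinct fields $L$. This does not damage the proof of Theorem~\ref{all-counts}, since Corollary~\ref{lift-same} only uses the second sentence of the proposition and checking the Frobenius condition at all $14$ values of $d$ certainly verifies it for the $7$ distinct $L$. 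But the headline count of $14$ is not the number of extensions, and ``correspond bijectively'' in your write-up is precisely where the overcount slips in.
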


\begin{proof} These extensions are obtained by adjoining a square root of the
product of a subset of $\{-1,2,3,13\}$; the subsets $\{\}, \{-1, 2, 13\}$ are
not usable because their products are squares of elements of $F_6$.
The Frobenius is of order $6$ if and only
if it has order $3$ in $\Gal(F_6/\Q)$ and order $2$ in the quadratic extension.
The second statement is now easily checked.
\end{proof}

Combining the last two propositions with Lemma \ref{small-counts} and
Corollary \ref{lift-same},
we complete the proof of Theorem \ref{all-counts}.

To close this section, we determine the number of $\F_p$-points on $CY_{13}$
for all $p$ of good reduction.

\begin{prop}\label{point-count}
  For all $p$ different from $2, 3, 13$, the number of
  $\F_p$-points on $CY_{13}$ is equal to $p^3 + 49p^2 + 49p + 1 - a_p$,
  where as before $a_p$ is the eigenvalue of the Hecke operator $T_p$ on
  $f_{13,4}$.
\end{prop}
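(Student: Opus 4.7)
The plan combines the point count verifications already in hand with the modularity statement of Theorem~\ref{all-counts} and the Grothendieck--Lefschetz trace formula. First, for primes $5 \le p \le 200$ with $p \ne 13$ the formula is immediate: by Theorem~\ref{count-rigid} and Proposition~\ref{count-doct},
$$[CY_{13}]_p = [H_3]_p + 43p^2 + 64p = (p^3 + 6p^2 - 15p + 1 - a_p) + 43p^2 + 64p = p^3 + 49p^2 + 49p + 1 - a_p.$$

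To extend to every $p \notin \{2,3,13\}$, I would apply the Lefschetz trace formula
$$[CY_{13}]_p = \sum_{i=0}^{6} (-1)^i \tr(\Frob_p \mid H^i_{\etale}(CY_{13,\overline{\F}_p}, \Q_\ell))$$
and control each summand separately. By the Calabi-Yau condition $h^{1,0} = h^{2,0} = 0$, so $H^1_{\etale} = H^5_{\etale} = 0$ (by Poincar\'e duality) and the rational Hodge structure on $H^2$ is of pure type $(1,1)$. The Lefschetz $(1,1)$ theorem then implies that $H^2(CY_{13}(\C), \Q)$ is spanned by divisor classes, and Poincar\'e duality gives the analogous statement for $H^4$. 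Since algebraic cycles on a variety over $\Q$ are parametrized by Hilbert schemes of finite type, the Galois representations on $H^2_{\etale}(1)$ and $H^4_{\etale}(2)$ factor through finite quotients of $\Gal(\bar\Q/\Q)$ unramified outside $\{2, 3, 13, \ell\}$. The $H^0$ and $H^6$ terms contribute $1$ and $p^3$, and by Theorem~\ref{all-counts} the $H^3$ term equals $-a_p$.

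By the Weil bounds the Frobenius eigenvalues on $H^2_{\etale}(1)$ and $H^4_{\etale}(2)$ are roots of unity, so the $H^2$ and $H^4$ contributions equal $n_2(p)\cdot p$ and $n_4(p)\cdot p^2$ where $n_2, n_4$ are integer-valued class functions on the finite group $\Gal(K/\Q)$ for some $K$ unramified outside $\{2, 3, 13, \ell\}$. The verified range forces $n_2(\Frob_p) = n_4(\Frob_p) = 49$ for every $p \in [5, 200] \setminus \{13\}$; enumerating candidate $K$ via \cite{tables} (in the spirit of Corollary~\ref{lift-same}) and confirming that these Frobenius elements exhaust every conjugacy class of $\Gal(K/\Q)$ yields $n_2 \equiv n_4 \equiv 49$, giving the formula for all $p \notin \{2, 3, 13\}$.

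The main obstacle is this final step: controlling the finite extension $K$ cut out by the $H^2$ and $H^4$ representations and checking that $[5, 200]$ covers its conjugacy classes. This can be handled either by the Chebotarev-style enumeration just described or, more concretely, by exhibiting an explicit Galois-stable basis for the N\'eron--Severi lattice of $CY_{13}$ over $\Q$ (e.g.\ by tracking the exceptional divisors produced in the Cynk--Szemberg resolution of Theorem~\ref{is-cy}), showing directly that the rank is $49$ and that every cycle class is Galois-invariant.
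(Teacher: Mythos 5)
Your opening reduction and the Lefschetz trace formula framework are correct, and you correctly identify that the whole problem reduces to controlling the Galois action on $H^2$ and $H^4$. However, the primary strategy you propose---enumerating candidate fields $K$ from the tables of \cite{tables} and checking that the verified primes hit every conjugacy class of $\Gal(K/\Q)$---does not actually work. Unlike in Corollary~\ref{lift-same} or Proposition~\ref{unram}, where the degree of the relevant extension is explicitly bounded ($\S_4$, $\S_3 \oplus \cyc_2$), here $K$ is the splitting field of a faithful action of a finite group on $\Pic CY_{13} \cong \Z^{49}$ preserving the intersection form. The only \emph{a priori} bound on $|\Gal(K/\Q)|$ comes from Minkowski's bound on finite subgroups of $GL_{49}(\Z)$, which is astronomically large; there is no feasible enumeration of number fields with that Galois group unramified outside $\{2,3,13\}$, and the database of \cite{tables} contains nothing of that size. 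So the Chebotarev step as stated is a genuine gap, not a finite computation.

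Your fallback suggestion---exhibit an explicit Galois-stable $\Q$-rational basis for the N\'eron--Severi lattice---is precisely what the paper does, and it is the real content of the proof, not an alternative. Concretely, the paper lists $49$ divisors: the pullbacks to $CY_{13}$ of the $7$ generators of $\Pic B$ (including the hyperplane pullback $H$ from $\P^3$), the $28 + 7 + 3 + 3 = 41$ exceptional divisors produced in the Cynk--Szemberg resolution of Theorem~\ref{is-cy}, and the class $K$ of a K3 fibre from Proposition~\ref{k3-fib}. Each of the $47$ classes other than $H$ and $K$ is an immobile exceptional divisor paired with a curve meeting it and no other generator, which gives independence directly; the remaining two, $H$ and $K$, are shown independent by the intersection calculation $K^2=0$ (adjunction on a Calabi--Yau) and $H\cdot(H\cdot K)\ne 0 = K\cdot(H\cdot K)$. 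Since these $49$ classes are all defined over $\Q$ and $h^{1,1}=49$ (from $\chi=98$), the Galois action on $H^2_{\etale}(\Q_\ell)(1)$ is trivial, giving the desired traces $49p$ and $49p^2$ on $H^2$ and $H^4$. So you had the right idea held in reserve, but treated as a remark what is in fact the whole argument; as written, the proposal does not close the gap.
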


\begin{proof} (sketch)
  Since $CY_{13}$ is a rigid Calabi-Yau threefold of Euler characteristic $98$
  (Theorem \ref{is-cy}), it has $h^{1,1} = h^{2,2} = 49$.  We have just shown
  that the trace of Frobenius on $H^3$ is $a_p$.  So, by the standard properties
  of \'etale cohomology, it suffices to show that $\Pic CY_{13}$ has a basis
  defined over $\Q$.  Indeed, consider the pullbacks of the $7$ generators of
  $\Pic B$, the $28 + 7 + 3 + 3 = 41$ exceptional divisors, and $K$, a K3
  surface in the fibration of Proposition \ref{k3-fib}.  Let $E_i$ for
  $1 \le i \le 47$ be all of these divisors except for $K$ and $H$,
  the pullback of the hyperplane class from $\P^3$.  The $E_i$ are all
  immobile exceptional divisors, and for each $E_i$ there is a curve that
  meets it and no other generators.

  So it suffices to show that $H, K$ are linearly independent.  
  By the adjunction formula we have $K^2 = 0$ (this holds for any Calabi-Yau
  divisor on a Calabi-Yau variety).  Thus $H\cdot(H \cdot K)$ is twice
  the degree of the image of $K$ in $\P^3$, which is not $0$, while
  $K\cdot(H \cdot K) = H\cdot(K\cdot K) = 0.$
\end{proof}

\section{Modular forms of level $78$ and $390$}
As remarked at (\ref{surf-13}), the double cover of $\P^2$ branched
along $$V_0(-V_0/9-V_1/9+V_2)$$
appears to give a rigid Calabi-Yau threefold that
realizes a twist of the newform $78/4$, where the $V_i$ are defined in 
(\ref{ls4-seqs}).  The proof of this statement is very
similar to that of Theorem \ref{all-counts} and so will not be presented in
detail.  Rather, we will only indicate the changes to the argument for
Theorem \ref{all-counts} that are needed for the proof.

To reduce the level we change the sign.  The expression $V_0/9+V_1/9-V_2$
factors as a product of a linear 
and a cubic polynomial.  The six components of the branch locus intersect in
$23$ curves and all intersections of these are transverse.  All but one of them
are lines.  Also there are $9$ points
where four surfaces meet.  As in Proposition \ref{count-four} we find that
each of these contributes $p^2+2p+1$ points to the double cover and $p^2+p+1$
to the base.

One checks that the intersection of the cubic surface with one of the planes
is a nodal cubic curve.  We call the curve $N$ and the node $p_N$.
All the other components are smooth and meet transversely.

\begin{prop}\label{rigid78}
  The double cover of $\P^3$ branched along $V_0(V_0/9+V_1/9-V_2) = 0$ has a
  rigid Calabi-Yau resolution.
\end{prop}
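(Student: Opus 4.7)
The plan is to mirror the combined arguments of Theorem \ref{is-cy} and Theorem \ref{rigid13}. First I would construct a Cynk-Szemberg-type resolution of the double cover, verifying the Calabi-Yau conditions via Proposition \ref{easy-cy}, and then establish rigidity by combining maximal singularity of the branch octic inside the $\P^2$-family with a vanishing $H^1(\T_B \otimes K_B) = 0$ for the base $B$ of the resolution, as in the proof of Theorem \ref{rigid13}.

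The only feature of the branch locus that does not fit directly into the framework of \cite{cynk-szem} is the nodal cubic curve $N$ in the intersection of the cubic factor of $V_0/9+V_1/9-V_2$ with one of the planes; the Cynk-Szemberg hypotheses require the pairwise intersection curves to be smooth. I would handle this by a preliminary blowup of $\P^3$ at the node $p_N$ of $N$: locally at $p_N$, two of the branch components meet pairwise in a curve with an ordinary node, and blowing up $p_N$ separates the two branches of $N$ so that its strict transform becomes smooth. It then has to be checked that this auxiliary blowup does not create any new non-transverse intersections of branch components or any new branch curve of positive genus, and that the exceptional $\P^2$ either meets the branch locus in a smooth configuration or can be added as a branch component so that the relation $\sum[B_i] = -2K$ is preserved in the Picard group of the blowup.

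Once $N$ has been resolved, I would run the Cynk-Szemberg recipe: blow up the $9$ fourfold intersection points (each contributing $p^2+2p+1$ points to the count, as already noted), then blow up the $23$ transverse intersection curves in an order compatible with \cite[Theorem 2.1]{cynk-szem}, adjusting strict transforms so that $2K + \sum[B_i] = 0$ persists, and finally take a small resolution of any ordinary double points produced on the double cover above singular points of individual branch components. Proposition \ref{easy-cy} then identifies the result as a Calabi-Yau threefold.

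For rigidity, I would argue as in Theorem \ref{rigid13}. The octic $V_0(V_0/9+V_1/9-V_2)$ is the one identified at (\ref{surf-13}) as coming from the maximally singular point $(-1/9,-1/9,1)$ of the $\P^2$-family of Proposition \ref{twod-def}, so it admits no equisingular deformations as a double cover. The base $B$ of the resolution is obtained from $\P^3$ by blowing up only a point and rational curves (including $p_N$, the strict transform of $N$, the remaining lines of the singular locus, and the fourfold points), so $H^1(\T_B \otimes K_B) = 0$ by \cite[Proposition 5.1]{cynk-vs} combined with the birational invariance of $h^i(\O)$. Invoking \cite[Propositions 2.1--2.2]{cynk-vs} as in the proofs of Proposition \ref{twod-def} and Theorem \ref{rigid13} then gives $h^1(\T) = 0$, i.e.\ rigidity. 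The main obstacle is the careful handling of the nodal cubic: one must verify that a single blowup of $p_N$ suffices, that no positive-genus branch curve is introduced, and that the additional exceptional divisor is still rational so that the base remains in the class of varieties for which the relevant cohomology vanishings hold.
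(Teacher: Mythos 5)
Your plan for the rigidity step is exactly the paper's (invoke the maximal-singularity observation from (\ref{surf-13}) together with $H^1(\T_B \otimes K_B) = 0$ and \cite[Propositions 2.1--2.2]{cynk-vs}, as in Theorem \ref{rigid13}). However, your handling of the nodal cubic $N$ contains a genuine gap, and it is precisely the pitfall the paper flags. You propose a preliminary blowup of $\P^3$ at the node $p_N$. The octic branch divisor has multiplicity $2$ at $p_N$ (two of the branch components pass through it), so after blowing up the point the strict transform of the branch locus has class $8H - 2E$, whereas $-2K$ on the blown-up $\P^3$ is $8H - 4E$. The numerical Calabi-Yau relation $\sum[B_i] = -2K$ is therefore destroyed, and it cannot be restored by adding the exceptional $\P^2$ to the branch locus: you would need to subtract a further $2E$, not add a nonnegative multiple. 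The Cynk--Szemberg machinery then no longer produces a variety with trivial canonical class.

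The paper avoids this by blowing up the nodal curve $N$ itself rather than its node. Blowing up a curve along which two branch components meet raises $K$ by $E$ and lowers the branch class by $2E$, so $2K + \sum[B_i] = 0$ is preserved; the price is that the resulting base has a single ordinary double point over $p_N$, which is then removed by a small (crepant) resolution before running the rest of the Cynk--Szemberg recipe. Your sketch does correctly anticipate that one must worry about the relation in $\Pic$, but does not notice that the point blowup makes it unrecoverable, so the construction as written would not produce a Calabi--Yau threefold.
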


\begin{proof} As for $CY_{13}$, we construct a Cynk-Szemberg resolution.
  It is unwise to blow up $p_N$, because that would destroy the
  good numerical properties of the double cover; the canonical divisor of 
  $\P^3$ blown up in a point is $-4H+2E$, and we would have a branch locus of
  class $8H-2E$.  We avoid this problem by blowing up the singular curve
  $N$.  This produces a base for the double cover which is nonsingular
  except at one ordinary double point.  We may then take a small resolution
  of this point and continue with the Cynk-Szemberg resolution as before.
  The proof of rigidity is identical to that of Theorem \ref{rigid13}.
\end{proof}

When the blowup of $\P^3$ along $N$ is defined as a map to $\P^3 \times \P^{10}$
given as the identity on $\P^3$ and the basis for the linear system of cubics
vanishing along $N$ used by Magma, the coordinates of the singular point are
$$(-1/3:-1/3:-1/3:1:1/3:2/9:1/9:2/9:1/9:0:1/9:-2/3:-1/3:1).$$
It is easily verified that the tangent cone there is a quadric in $\A^4$ of
determinant $-3$ up to squares and that the branch function takes the value
$5$ there up to squares.  So if $(5/p) = 1$ there are two $\F_p$-rational
double points, and the resolution has rational tangent directions if
and only if $(-3/p) = 1$.  Thus, as in \cite[p.{} 24]{meyer-diss}, taking
a nonprojective small resolution of the nodes adds $2p$ points if 
$(5/p) = (-3/p) = 1$ and subtracts $2p$ points if $(5/p) = 1, (-3/p) = -1$.

Now we need to study the double covers of the intersections of the components
of the branch locus.  We start with $N$, which has $p+1-(-3/p)$ points mod
$p$.  Let $E_0$ be the exceptional divisor for the blowup of $\P^3$ along $N$:
it has $(p+1)(p+1-(-3/p))$ points.
The intersections of the four branch surfaces not containing
$N$ give us six fibres twice each, so they do not contribute to the branch
locus.  The two surfaces that do contain $N$ yield sections.  These surfaces
are disjoint because we blew up their intersection, along which they were
smooth.  So every fibre of $E_0$
is covered by a rational curve, and the number of points is the same on the 
double cover as on $E_0$.

So, by blowing up and resolving the nodes
we removed a curve with $p+1-(-3/p)$ points and
replaced it with a surface with $(p+1)(p+1-(-3/p)) + 2(-3/p)p$ points if
$(5/p) = 1$ or $(p+1)(p+1-(-3/p))$ points if $(5/p) = -1$.  One checks the
four cases to verify that this means we have added $p^2+p+(-15/p)p$ points
over $\F_p$ for any $p>5$.

For the lines, the calculation is much easier.  The exceptional divisors above
a line are all $\P^1 \times \P^1$.  For $7$ of them the branch locus consists
of two sections, and for $3$ of two sections and two fibres, so for these
the double covers all have $p^2+2p+1$ points mod $p$.  The rest consist of 
$3$ with branch curves of classes $F, F, S, S+2F$ and $9$ with $S, S+2F$.
These need to be checked individually, but in no case is it difficult to verify
that the number of points mod $p$ is $p^2+4p+1$.  

We now have enough information to compare the number of points on the octic
to that of the Cynk-Szemberg resolution.
The $9$ fourfold intersections add
$9(p^2+2p)$ to the count.  As in Proposition \ref{meet-point}, the double
covers of exceptional divisors above lines add $10(p^2+p)+12(p^2+3p)-12p$,
since there
are $12$ double points.  Finally, we add $p^2+p+(-15/p)p$ points for the
resolution of the curve $N$, concluding that the resolution of the double
octic has $32p^2+(53+(-15/p))p$ more points than the double octic itself.
This is highly encouraging in light of the following calculation:

\begin{prop}\label{count-78} For all primes $7 \le p \le 200$ the number of
$\F_p$-points on the double cover $t^2 = V_0(V_0/9+V_1/9-V_2)$ is equal to
$p^3+4p^2-17p-(-15/p)p+1-a_p$.  Thus the number of points on the Cynk-Szemberg
resolution is $p^3+36p^2+36p+1-a_p$. \qed
\end{prop}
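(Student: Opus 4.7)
The plan is to verify both assertions by direct computation, with the second deduced from the first via the Cynk-Szemberg accounting already assembled in this section.

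First I would fix an integral model of the double octic by rescaling $t$ to clear denominators, yielding a hypersurface in $\P(4,1,1,1,1)$ whose mod $p$ reduction is a double cover of $\P^3$ branched along $V_0(V_0/9+V_1/9-V_2)=0$ for every $p \notin \{2,3\}$. For each prime $p$ in the range $7 \le p \le 200$, I would enumerate the $p^3+p^2+p+1$ points of $\P^3(\F_p)$ in Magma, evaluate the branch expression $D$ at each such point, and tally $1+\left(\frac{D}{p}\right)$ preimages in the double cover (with $D \equiv 0$ contributing a single point). Comparing the resulting integer with $p^3+4p^2-17p-\left(\frac{-15}{p}\right)p+1-a_p$, where $a_p$ is the $p$-th Fourier coefficient of the rational newform labeled $78/4$ in \cite[Appendix C]{meyer-diss} (or recomputed from the new subspace of $S_4(\Gamma_0(78))$ using Magma), establishes the first assertion prime by prime.

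Granting the first formula, the second is purely arithmetic: in the paragraphs immediately preceding the proposition we established that the Cynk-Szemberg resolution contributes $9(p^2+2p)$ points from the nine fourfold intersection points, $10(p^2+p)+12(p^2+3p)-12p$ from the twelve exceptional divisors above the lines (including the correction from the $12$ ordinary double points), and $p^2+p+\left(\frac{-15}{p}\right)p$ from the resolution of the nodal cubic curve $N$. Summing, the total correction to $[Z]_p$ is $32p^2+\bigl(53+\left(\frac{-15}{p}\right)\bigr)p$, and adding this to the formula of the first assertion produces the desired $p^3+36p^2+36p+1-a_p$, with the Legendre symbol $\left(\frac{-15}{p}\right)$ cancelling exactly.

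The main (modest) obstacle lies in the bookkeeping around the curve $N$: its node introduces a dependence on $\left(\frac{-15}{p}\right)$ into both the raw count on the double octic and the resolution correction, and only their exact cancellation produces a closed form for the resolved threefold that is polynomial in $p$. The fact that this cancellation does occur serves as an a posteriori consistency check on the four-case analysis governing the contribution of $N$; once it is observed to hold, the proposition reduces to the routine enumeration described above.
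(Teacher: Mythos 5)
Your proposal matches the paper's approach exactly: the first formula is a direct computer verification over the range $7 \le p \le 200$, and the second follows by adding the correction $32p^2+(53+(-15/p))p$ tallied in the paragraphs preceding the proposition, with the Legendre symbols cancelling as you observe. One minor slip: there are $22$ exceptional divisors above the lines of intersection (the sources of the $10(p^2+p)$ and $12(p^2+3p)$ terms), not twelve, though the formula itself and the separate $-12p$ correction for the twelve ordinary double points are stated correctly.
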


The proof that the Galois representation for this threefold matches that of
the modular form $78/4$ uses the method of Livn\'e
(\cite[Theorem 1.5]{meyer-diss})
rather than that of Serre-Sch\" utt, since the mod $2$ 
representation is reducible.  By calculation, the only $p < 50$
for which the modular form has $a_p$ odd are $3, 13$.  To prove that there
are no more, we
consider the cubic extensions of $\Q$ unramified outside $2, 3, 13$.  If
the representation were reducible, it would cut out one of these extensions,
and a prime of Frobenius degree $3$ would have odd eigenvalue.  We already
saw in Proposition \ref{unram} that this is not possible, since there is
such a prime $< 50$ in every such extension.  Similarly, for the Galois
representation attached to the Calabi-Yau threefold, we study the
cubic extensions of $\Q$ unramified outside $2, 3, 5, 13$.  From
\cite{tables} we obtain a complete list of these and verify that all $114$
have an inert prime $p \in [17,43]$.

To conclude the proof that the semisimplifications of $\rho_1, \rho_2$ are
isomorphic, let 
$$\Q_S = \Q(\sqrt{-1},\sqrt{2},\sqrt{3},\sqrt{5},\sqrt{13})$$ 
be the compositum of all quadratic extensions of $\Q$ unramified outside
$2,3,5,13$.  Clearly $\Gal (\Q_S/\Q) \cong (\Z/2\Z)^5$.  
It suffices to show that there is a set of primes for which the trace of
Frobenius is the same for $\rho_1, \rho_2$ and for which every cubic
polynomial on $(\Z/2\Z)^5$ vanishing on their Frobenius images in
$\Gal (\Q_S/\Q)$ vanishes on all of $(\Z/2\Z)^5$.  In fact this
holds for the set of primes in $[17,157]$. \qed

%% We expect that the double cover 
%% $t^2 = -\frac{256}{3}V_0 - \frac{5}{6}V_1 + V_2$ will be a rigid Calabi-Yau
%% threefold such that the Galois representation on $H^3$ is the same as that
%% of a quadratic twist of the modular form $390/5$; however, this cannot be 
%% proved, because
%% \cite{tables} does not promise a complete list of extensions of $\Q$
%% unramified outside $2, 3, 5, 13, 23$.  Nevertheless, we have the following
%% result:

\begin{prop}\label{rigid390} The double cover $t^2 = V_0(-512V_0 -5V_1 + 6V_2)$
  has a rigid Calabi-Yau resolution.  If the tables of \cite{tables} are complete
  for cubic extensions of $\Q$ unramified outside $2,3,5,13,23$, it realizes the
modular form $390/5$.
\end{prop}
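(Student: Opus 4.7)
The plan is to adapt the strategy used for levels $13$ and $78$: construct a Cynk--Szemberg resolution of the double octic, verify rigidity from the $\P^2$-family of maximally singular octics, and identify the resulting Galois representation with that of the newform $390/5$ by Livn\'e's method.

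First I would analyze the singular subscheme of $V_0(-512V_0-5V_1+6V_2)=0$. The coefficient vector $(-512:-5:6)$ normalizes to the point $(-256/3:-5/6:1)\in\P^2$, which by construction lies at an intersection of two of the curves $C_i$ of (\ref{surf-13}); the branch locus is therefore maximally singular, picking up extra components beyond the thirteen lines of the $(4,13)$ configuration and the six $P_i\cap P_j$. Using Magma I would enumerate these extra components and, as in the proof of Proposition \ref{rigid78} where the nodal cubic $N$ forced a preliminary blowup of $\P^3$, blow up in advance any component (nodal curve, cuspidal curve, or embedded point) whose geometry would obstruct construction of a crepant resolution. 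The Cynk--Szemberg procedure---blow up the fourfold points of the branch locus, then its singular curves, then take the double cover, then small-resolve the ordinary double points lying above nodes of the branch components---then produces a smooth Calabi--Yau threefold $CY_{390}$; triviality of its canonical class and $h^{1,0}=h^{2,0}=0$ follow as in Proposition \ref{easy-cy}.

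Rigidity is then verbatim the proof of Theorem \ref{rigid13}: letting $B_{390}$ denote the base of the Cynk--Szemberg resolution, which is obtained from $\P^3$ by blowing up only points and rational curves, one has $H^1(\T_{B_{390}}\otimes K_{B_{390}})=0$, so by \cite[Propositions 2.1--2.2]{cynk-vs} every deformation of $CY_{390}$ is a double cover of a deformation of $B_{390}$. Because $(-256/3:-5/6:1)$ is an intersection of two of the $C_i$ in the $\P^2$ parametrizing the family, the octic is rigid as a maximally singular member of that family, so $h^1(\T_{CY_{390}})=0$.

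For modularity I would count $\F_p$-points on $CY_{390}$ via the Cynk--Szemberg construction, as in Proposition \ref{count-78}, and match the resulting traces against the eigenvalues of $390/5$ for a range of small primes. Since the mod $2$ Galois representation attached to $390/5$ is reducible (as can be checked from the parity of the $a_p$), Corollary \ref{lift-same} is unavailable and one applies Livn\'e's method over $\Q_S=\Q(\sqrt{-1},\sqrt{2},\sqrt{3},\sqrt{5},\sqrt{13},\sqrt{23})$, whose Galois group $(\Z/2\Z)^6$ demands a larger Chebotarev test than was needed for level $78$. The crucial ingredient is the irreducibility of the semisimplification of the Galois representation on $H^3(CY_{390})$, which requires ruling out every cubic extension of $\Q$ unramified outside the set $S=\{2,3,5,13,23\}$ of bad primes of $CY_{390}$ by exhibiting in each an inert prime of good reduction at which the point count has odd residue. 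This is precisely where the hypothesis on completeness of \cite{tables} enters, and is the main obstacle: identifying $S$ by a Gr\"obner-basis computation as in Lemma \ref{bad-primes} is routine, but the Livn\'e verification requires a genuinely complete list of cubic extensions, which is unconditional only up to the reliability of the tables.
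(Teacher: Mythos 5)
Your proposal follows the same strategy the paper uses: preliminary blowup of the badly singular part of the quartic branch component, Cynk--Szemberg resolution, rigidity via the argument of Theorem \ref{rigid13} combined with the observation that $(-256/3:-5/6:1)$ is a maximally singular point of the $\P^2$-family from Proposition \ref{twod-def}, and Livn\'e's method over $\Q_S=\Q(\sqrt{-1},\sqrt{2},\sqrt{3},\sqrt{5},\sqrt{13},\sqrt{23})$ with the completeness of the cubic-field tables as the conditional ingredient.

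A few details where your sketch is looser than what the paper actually does, worth knowing before you try to carry it out. The analogue of the nodal cubic $N$ from the $78$ case is not a singular curve here: the quartic $-512V_0-5V_1+6V_2$ is singular at eight isolated points, namely the six rational points where two coordinate planes meet $x_0+x_1+x_2+x_3=0$ and the two conjugate points $(\alpha:\alpha:\alpha:1)$ with $\alpha^2-\alpha-\tfrac1{12}=0$. The preliminary blowup is therefore at the six rational points (the paper shows this keeps the branch class equal to $-2K$ of the base because the quartic's singularities are ordinary double points, so the strict transform drops by $2E_i$ while $K$ rises by the same). The two nonrational nodes are handled at the end by a small resolution whose $\F_p$-point contribution depends on the splitting of $p$ in $\Q(\sqrt3)$ and in $\Q(\sqrt3,\sqrt{6\pm4\sqrt3})$, yielding the piecewise-defined $\alpha_p$ in the final point-count formula; your sketch does not anticipate this, and it is the reason the formula for $[CY_{390}]_p$ involves $(-3/p)$ and $(-2/p)$ terms rather than a clean $p^3+cp^2+cp+1-a_p$. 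The paper also exploits the $\S_3$-invariance of the branch polynomial under permuting $x_0,x_1,x_2$, which cuts down the number of exceptional-divisor double covers that must be analyzed from $22$ curves to $7$ orbits. None of this changes the structure of the argument, but each of these points is where the actual computation lives.
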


\begin{remark} 
We do not give a detailed proof because no new techniques are
involved and the desired conclusion remains conditional.
Given a list of extensions known to be complete, it would be
easy to finish the proof that the claimed modular form is realized
(assuming that this is true).
At worst it would be necessary to calculate a few more $a_p$.
\end{remark}

\begin{proof} (sketch)
The branch locus consists of the four coordinate planes and a quartic surface
which is singular at the six points where two of the coordinate 
planes meet the plane $x_0+x_1+x_2+x_3 = 0$ and at $(\alpha:\alpha:\alpha:1)$,
where $\alpha^2 - \alpha - 1/12 = 0$.  Blowing up the six rational singular 
points,
we obtain a branch locus consisting of five components meeting in $22$ curves,
any two of which intersect transversely in a single rational point or not at 
all.  The canonical divisor of $\P^3$ blown up in the six points is 
$-4H+2\sum E_i$, where the $E_i$ are the exceptional divisors.  The class
of the strict transform of the quartic surface is $4H - 2\sum E_i$, because the
singularities are ordinary double points.
The classes
of the components of the branch locus are $H-\sum E_{i,n}$, where $E_{i,n}$ is
the subset of the $E_i$ of divisors above points whose $n$\/th coordinate is 
$0$.  Each $E_i$ appears twice, so the class of the branch locus is $-2$ times
that of the canonical of the base.  Now the surfaces all meet transversely
in rational curves
that meet transversely and we can construct a crepant Cynk-Szemberg resolution.
The proof of rigidity is now identical to that in Theorem \ref{rigid13}.

For counting the points, it is helpful to notice
that $V_0(-512V_0 -5V_1 + 6V_2)$ is invariant under 
permutations of the first three coordinates.
The double covers of the rational singular points have 
$p^2+(2+(-3/p)+(-2/p))p+1$ points mod $p$; by the $\S_3$-invariance it is 
enough to check this for one of them.  They replace $1$ point of $\P^3$.
We then blow up the three fourfold points of intersection, obtaining
$p^2+(1+(-6/p))p+1$ points above each to replace $1$.  The $22$ curves
of intersection fall into $7$ orbits, tabulated below.  Each one is a 
smooth rational curve (indeed, the strict transform of a line in $\P^3$) and
so contains $p+1$ points mod $p$.

\begin{table}[h]
\centering
\begin{tabular}{|l|c|c|}
\hline
Representative&Size of orbit&Number of points\cr\hline
$x=y=0$&$3$&$p^2+2p+1$\cr\hline
$x=w=0$&$3$&$p^2+2p+1$\cr\hline
$x+y=w=0$&$3$&$p^2+(1+2(-2/p))p+1$\cr\hline
$y+z=w=0$&$6$&$p^2+(1+2(-2/p))p+1$\cr\hline
$x+y+w/6=z=0$&$3$&$p^2+(1+2(-2/p))p+1$\cr\hline
$x+y+z=w=0$&$1$&$p^2+2p+1$\cr\hline
$x+y+w=z=0$&$3$&$p^2+2p+1$\cr\hline
\end{tabular}
\smallskip
\caption{Curves in the singular locus of the branch locus of a partial 
resolution of $t^2 = -512V_0-5V_1+6V_2$ and the number of points on the double
covers of their exceptional divisors.}
\end{table}

In addition, we need to resolve the nonrational nodes by small resolutions.
The field of definition of each node is $\Q(\sqrt{3})$, while that of the
lines in an exceptional divisor above a node is 
$F_4 = \Q(\sqrt{3},\sqrt{6\pm 4\sqrt 3})$.  

\begin{defn}
For a prime $p>3$, let 
$\alpha_p = 0$ if
$(3/p) = -1$ or $(-3/p) = -1$.  Otherwise, let $\alpha_p = 2$ if $p$
splits completely in $F_4$ and $-2$ if not.
\end{defn}

If $(3/p) = -1$ the nodes
are not defined over $\F_p$ so the resolution does not affect the count of
$\F_p$-points.  If $(-3/p) = -1$,
then one node contributes $p$ and the other $-p$, so they
cancel out.  If both are $1$, then we get $2p$ if $p$ splits 
completely in $F_4$ and otherwise $-2p$.  Thus by resolving the nodes we
add $\alpha_p p$ points to our total.

Finally, we account for the difference between blowing up curves individually
and in order.  This is not the situation of Proposition 
\ref{meet-point} but that of the following remark, 
because the Picard groups of the exceptional divisors are
not defined over $\Q$.  There are $12$ points where exactly two of
the curves meet, and at each of these it is a double cover 
defined by a function of the form $-2$ times a square that is replaced by an
ordinary curve.  So for each one we subtract $(-2/p)p$ from the count.
In summary, the number of points on the Cynk-Szemberg resolution is greater than
that on the double octic by
$$31p^2+(31+3(-6/p)+6(-3/p)+18(-2/p)+\alpha_p)p.$$
Combining this with a count of points on the double octic, we find that for 
$p \ne 23 \in [17,400]$ the
number of points on the Cynk-Szemberg resolution is 
$$p^3+(32+(-3/p)+3(-2/p))(p^2+p)+1-a_p,$$
where $a_p$ is the Hecke eigenvalue for the newform $390/5$.  This
suggests that there is a basis for the Picard group of the resolution
consisting of $28$ rational divisors, $2$ divisors conjugate over 
$\Q(\sqrt{-3})$, and $3$ pairs of divisors conjugate over $\Q(\sqrt{-2})$.

At this point we need to assume that our lists of fields from \cite{tables}
are complete in order to use Livn\'e's method.  Under this assumption, by 
going up to $97$ we verify that the mod $2$ representations are reducible
and have trace $0$.
As before, let 
$Q_S = \Q(\sqrt{-1},\sqrt{2},\sqrt{3},\sqrt{5},\sqrt{13}, \sqrt{23})$ be the
maximal extension of exponent $2$ unramified outside the primes dividing the
level of the modular form and the primes of bad reduction of the Calabi-Yau.

It suffices to show that there is a set of primes for which the trace of
Frobenius is the same for $\rho_1, \rho_2$ and for which every cubic
polynomial on $(\Z/2\Z)^6$ vanishing on their Frobenius images in
$\Gal (\Q_S/\Q)$ vanishes on all of $(\Z/2\Z)^6$.  In fact this
holds for the set of primes not equal to $23$ and in $[17,353]$.
\end{proof}

\begin{remark} It is striking that for all three of these examples there is
a prime of bad reduction that does not divide the level of the modular form.
This may be
related to the phenomenon described by Sch\" utt in \cite{schutt}.
\end{remark}

\section{Other graphs giving forms of weight $3$ and $4$}

Brown and Schnetz give (\cite[Figure 5]{mf-qft}) graphs for which the point
counts match the modular forms of weight $4$ and level $5, 6, 7, 17$, in 
addition to level $13$ on which we have been concentrating up to now.  They
also give graphs for which the point count matches the forms of weight $3$ and
level $7, 8, 12$.  In this final section we will
apply the methods of this paper to these graphs.
We display graphs isomorphic to those of \cite{mf-qft}
in Figure \ref{graphs-wt-3},
with the order on the vertices given by {\tt genreg} \cite{genreg}.
\begin{defn}\label{genreg-inds} We refer to the $n$\/th graph in the list of
$4$-regular graphs on $k$ vertices produced by {\tt genreg} as $G_{k,n}$.
\end{defn}

\begin{myempty}\label{graphs-wt-3}
\end{myempty}
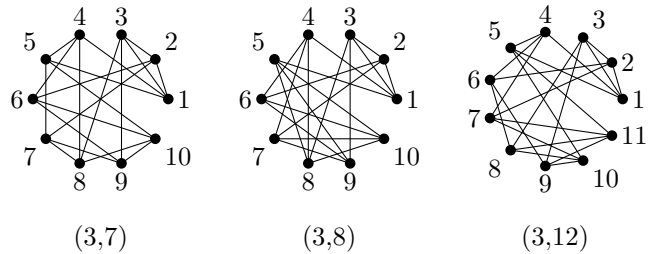
\begin{figure}[ht]
\begin{center}
\begin{tikzpicture}
\matrix[column sep=0.4cm, row sep=0.2cm]{
% (3,7)
\coordinate[label=right:$1$] (u1) at (0:0.9); % {$1$};
\coordinate[label=above right:$2$] (u2) at (2*pi/10 r:0.9); % {$2$};
\coordinate[label=above:$3$] (u3) at (4*pi/10 r:0.9); % {$3$};
\coordinate[label=above:$4$] (u4) at (6*pi/10 r:0.9); % {$4$};
\coordinate[label=above left:$5$] (u5) at (8*pi/10 r:0.9); % {$5$};
\coordinate[label=left:$6$] (u6) at (10*pi/10 r:0.9); % {$6$};
\coordinate[label=below left:$7$] (u7) at (12*pi/10 r:0.9); % {$7$};
\coordinate[label=below:$8$] (u8) at (14*pi/10 r:0.9); % {$8$};
\coordinate[label=below:$9$] (u9) at (16*pi/10 r:0.9); % {$9$};
\coordinate[label=below right:$10$] (u10) at (18*pi/10 r:0.9); % {$10$};
\draw (u1) -- (u2);
\draw (u1) -- (u3);
\draw (u1) -- (u4);
\draw (u1) -- (u5);
\draw (u2) -- (u3);
\draw (u2) -- (u6);
\draw (u2) -- (u7);
\draw (u3) -- (u8);
\draw (u3) -- (u9);
\draw (u4) -- (u5);
\draw (u4) -- (u6);
\draw (u4) -- (u8);
\draw (u5) -- (u7);
\draw (u5) -- (u10);
\draw (u6) -- (u9);
\draw (u6) -- (u10);
\draw (u7) -- (u8);
\draw (u7) -- (u9);
\draw (u8) -- (u10);
\draw (u9) -- (u10);
\fill (u1) circle (2pt) (u2) circle (2pt) (u3) circle (2pt) (u4) circle (2pt) (u5) circle (2pt) (u6) circle (2pt) (u7) circle (2pt) (u8) circle (2pt) (u9) circle (2pt) (u10) circle (2pt);&
% (3,8)
\coordinate[label=right:$1$] (v1) at (0:0.9); % {$1$};
\coordinate[label=above right:$2$] (v2) at (2*pi/10 r:0.9); % {$2$};
\coordinate[label=above:$3$] (v3) at (4*pi/10 r:0.9); % {$3$};
\coordinate[label=above:$4$] (v4) at (6*pi/10 r:0.9); % {$4$};
\coordinate[label=above left:$5$] (v5) at (8*pi/10 r:0.9); % {$5$};
\coordinate[label=left:$6$] (v6) at (10*pi/10 r:0.9); % {$6$};
\coordinate[label=below left:$7$] (v7) at (12*pi/10 r:0.9); % {$7$};
\coordinate[label=below:$8$] (v8) at (14*pi/10 r:0.9); % {$8$};
\coordinate[label=below:$9$] (v9) at (16*pi/10 r:0.9); % {$9$};
\coordinate[label=below right:$10$] (v10) at (18*pi/10 r:0.9); % {$10$};
\draw (v1) -- (v2);
\draw (v1) -- (v3);
\draw (v1) -- (v4);
\draw (v1) -- (v5);
\draw (v2) -- (v3);
\draw (v2) -- (v6);
\draw (v2) -- (v7);
\draw (v3) -- (v8);
\draw (v3) -- (v9);
\draw (v4) -- (v6);
\draw (v4) -- (v7);
\draw (v4) -- (v8);
\draw (v5) -- (v8);
\draw (v5) -- (v9);
\draw (v5) -- (v10);
\draw (v6) -- (v9);
\draw (v6) -- (v10);
\draw (v7) -- (v9);
\draw (v7) -- (v10);
\draw (v8) -- (v10);
\fill (v1) circle (2pt) (v2) circle (2pt) (v3) circle (2pt) (v4) circle (2pt) (v5) circle (2pt) (v6) circle (2pt) (v7) circle (2pt) (v8) circle (2pt) (v9) circle (2pt) (v10) circle (2pt); &

%(3,12)
\coordinate[label=right:$1$] (w1) at (0:0.9); % {$1$};
\coordinate[label=right:$2$] (w2) at (2*pi/11 r:0.9); % {$2$};
\coordinate[label=above right:$3$] (w3) at (4*pi/11 r:0.9); % {$3$};
\coordinate[label=above:$4$] (w4) at (6*pi/11 r:0.9); % {$4$};
\coordinate[label=above left:$5$] (w5) at (8*pi/11 r:0.9); % {$5$};
\coordinate[label=left:$6$] (w6) at (10*pi/11 r:0.9); % {$6$};
\coordinate[label=left:$7$] (w7) at (12*pi/11 r:0.9); % {$7$};
\coordinate[label=below left:$8$] (w8) at (14*pi/11 r:0.9); % {$8$};
\coordinate[label=below:$9$] (w9) at (16*pi/11 r:0.9); % {$9$};
\coordinate[label=below right:$10$] (w10) at (18*pi/11 r:0.9); % {$10$};
\coordinate[label=right:$11$] (w11) at (20*pi/11 r:0.9); % {$10$}; 
\draw (w1) -- (w2);
\draw (w1) -- (w3);
\draw (w1) -- (w4);
\draw (w1) -- (w5);
\draw (w2) -- (w3);
\draw (w2) -- (w6);
\draw (w2) -- (w7);
\draw (w3) -- (w8);
\draw (w3) -- (w9);
\draw (w4) -- (w5);
\draw (w4) -- (w6);
\draw (w4) -- (w7);
\draw (w5) -- (w10);
\draw (w5) -- (w11);
\draw (w6) -- (w8);
\draw (w6) -- (w9);
\draw (w7) -- (w10);
\draw (w7) -- (w11);
\draw (w8) -- (w10);
\draw (w8) -- (w11);
\draw (w9) -- (w10);
\draw (w9) -- (w11);
\fill (w1) circle (2pt) (w2) circle (2pt) (w3) circle (2pt) (w4) circle (2pt) (w5) circle (2pt) (w6) circle (2pt) (w7) circle (2pt) (w8) circle (2pt) (w9) circle (2pt) (w10) circle (2pt) (w11) circle (2pt); \\
\node{(3,7)}; & \node{(3,8)}; & \node{(3,12)}; \\}; 
%% \node[below=2cm,align=flush center,text width=10cm]
%% {{\sc Figure 2.} Graphs for which the $c_p$ are congruent mod $p$ to the eigenvalues of forms of weight $3$ and level $7, 8, 12$. };

\end{tikzpicture}
\end{center}
\caption{Graphs for which the $c_p$ are congruent mod $p$ to the eigenvalues
of forms of weight $3$ and level $7, 8, 12$.}\label{graphs-wt-3}
\end{figure}

\subsection{Weight 3, level 7}\label{w3-lev7}
There is very little to add to the discussion of this example in \cite{k3-phi4}.
The graph is isomorphic to $G_{10,54}$.
As in \cite{k3-phi4}, we choose to delete vertex $3$ and begin by computing
the same five-invariant, which in our numbering comes from the edges
$$(1,2),(1,4),(1,5),(2,7),(4,5).$$
Instead of reducing the sequence of edges of \cite{k3-phi4},
we reduce the sequence
$$(2,6),(4,8),(6,9),(6,10),(7,9),(9,10),$$ obtaining a polynomial in $5$
variables to which subspace reduction for $x_0 = x_1 = x_2$ applies.
The Segre embedding produces a surface of degree $8$ in $\P^5$ with
canonical singularities, one each of types $A_4, A_3, A_2$ and four $A_1$, and
trivial canonical divisor.  One can proceed as in \cite{k3-phi4} or as in the
next example to show that the
cusp form of weight $3$ associated to the surface is of level $7$.

\subsection{Weight 3, level 8}\label{w3-lev8}
We begin by deleting vertex $2$.  Lemma 55 of \cite{k3-phi4} does not apply
here, but by computing the five-invariant for the edges
$$(1,4),(3,8),(4,7),(6,10),(7,10)$$
and the linear and resultant reductions for edges
$$(7,9),(1,3),(1,5),(3,9),(5,8).$$
we find a polynomial in $6$ variables on which normal reduction can be
used to obtain a double cover of $\P^3$ branched along an octic, which is
$$ \begin{aligned}
&(x_1+x_2)(x_0^2x_2 + x_0x_1x_2 + x_1x_2x_3 + x_1x_3^2) \times \cr
&\quad x_0^2x_1x_2 + x_0x_1^2x_2 + x_0^2x_2^2 + x_0x_1x_2^2 + 
        4x_0x_1x_2x_3 \cr
&\quad + x_1^2x_2x_3 + x_1x_2^2x_3 + x_1^2x_3^2 + 
        x_1x_2x_3^2. \cr
\end{aligned}$$
Now, this octic is quite unlike the ones discussed in previous sections:
the quartic component is singular along the line $x_1 = x_2 = 0$, which is
contained in the other two components.  Likewise, if we convert the double
cover to a quintic $Q$ by means of Proposition \ref{to-hyp}, we find that every
monomial in the equation defining $Q$ vanishes to order $3$ along the line
$L: x_0 = x_2 = x_3 = 0$.  So we apply Proposition \ref{lin-sim} to obtain a
prime-similar subvariety of $\P^1 \times \P^2$.
%% has a singularity along the line $L: x_0 = x_2 = x_3 = 0$ which is not like the
%% singular curves on quintics previously discussed.  The obvious thing to do
%% is to blow up the quintic along $L$ to obtain a subvariety
%% $V_{4,2} \subset \P^4 \times \P^2$.

%% \begin{prop}\label{blowup-line}
%% $V_{4,2}$ is prime-similar to a point.
%% \end{prop}

%% \begin{proof}
%% Consider the projection to the second factor over $\F_p$.  
%% The inverse image of a generic point
%% is a smooth conic, which has $p+1$ points.  The only possible degenerations 
%% are the union of two coplanar lines (rational or not) or a double line,
%% and for all of these the number of points is $1 \bmod p$.  So we only need
%% to worry about fibres of the projection that have dimension $2$.  However,
%% any such fibre is defined inside $Q$ by two linear equations and is therefore
%% contained in the plane defined by the same equations.  So if it has dimension
%% $2$ it is a plane, and again the number of points is $1 \bmod p$.

%% There are $p^2+p+1$ fibres, and on each the number of points is $1 \bmod p$,
%% so the total number of points is $1 \bmod p$ for all $p$.
%% \end{proof}

%% The blowup $Q \to V_{4,2}$ is an isomorphism away from $L$, so as in 
%% Proposition \ref{red-simil} we conclude that $Q$ is prime-similar to the
%% exceptional divisor $E$.  Since $x_0 = x_2 = x_3$ on $E$ it is naturally
%% embedded in $\P^1 \times \P^2$.  

The Segre embedding makes it a subvariety of
$\P^5$ defined by the equations
$$\begin{aligned}
&x_1x_5 - x_2x_4, \quad x_0x_5 - x_2x_3, \quad x_0x_4 - x_1x_3, \cr
&x_0x_2x_3 - x_1x_2x_4 - 2x_2^2x_4 - x_2^2x_5 - 4x_2x_4^2 - 
    4x_2x_4x_5 + x_3^2x_4 - x_4^3 - 2x_4^2x_5 - x_4x_5^2, \cr
&\quad x_0^2x_2 - x_1^2x_2 - 2x_1x_2^2 - 4x_1x_2x_4 + x_1x_3^2 - x_1x_4^2 - x_2^3 - 4x_2^2x_4 - 2x_2x_4^2 - x_2x_4x_5. \cr
\end{aligned}
$$
Projecting away from $(0:0:0:-1:0:1)$ gives a birational
equivalence with a surface in $\P^3$ defined by a quadric and a cubic with
only canonical singularities, so this is a K3 surface.
It has seven singular points, four $A_2$ and three $A_1$,
and there are $14$ lines through one or more of the singularities.
It is routine to work out the intersection matrix of the $11$ exceptional
curves and these lines.
The matrix has rank $20$, which proves
that the graph is modular of weight $3$, and when five rows and columns
corresponding to generators of the kernel are deleted the determinant is $8$,
so the modular form is a twist of the one of level $8$.

To prove that the modular form is indeed the newform of level $8$, we 
pass to $P$, the projection away from $(0:0:0:-1:0:1)$ described earlier, which
is clearly prime-similar.  We 
verify, using Lemma \ref{bad-primes}, that $P$ has good reduction outside
$\{2,3\}$.  Also, it has an $A_3$ singularity, three $A_2$, and three $A_1$,
so the number of $\F_p$-points on a resolution is $12p$ more than on $P$.
The trace of Frobenius acting on the two-dimensional component of
$H^2(\tilde P)$ is therefore $[P]_p - p^2 - 8p - 1$.

Next we check that the trace of
Frobenius at $p$ is even for a set of primes of good reduction
and including one 
prime inert in every cubic extension of $\Q$ unramified outside $2, 3$.
From \cite{tables} there are $9$ such fields and we need to go up to
$p = 19$ to exclude them all.  To apply Livn\'e's method, we need to find
primes of good reduction with all possible Frobenius elements in 
$\Gal(\Q(\sqrt{-1},\sqrt{2},\sqrt{3})/\Q)$ for which $a_p = \tr \Frob_p$;
it suffices to consider the primes up to $23$.  This confirms that
the modular form is that of level $8$.

\subsection{Weight 3, level 12}
We start by reducing the projectivized
graph hypersurface to a hypersurface of degree $6$
in $\P^5$ by deleting vertex $3$, computing the five-invariant for the edges
$$(1,2),(1,4),(4,5),(7,11),(8,11)$$
and then reducing the edges
$$(1,5),(2,6),(2,7),(4,6),(6,8),(6,9),(8,10).$$
Once that is done, we apply subspace reduction to the line 
$x_0 = x_1 = x_2 = x_3 = 0$ and then Proposition \ref{to-hyp} to obtain a
quintic $Q_{12}$ in $\P^4$ defined by
$$\begin{aligned}
&x_0^2x_1x_2^2 + x_0^2x_1x_2x_3 + x_0^2x_1x_2x_4 + x_0^2x_1x_3x_4 + 
    x_0^2x_2^2x_3 \cr
&\quad + 2x_0^2x_2x_3x_4 + x_0^2x_3x_4^2 - 4x_1^2x_2^2x_4 
    - 8x_1^2x_2x_3x_4 - 4x_1^2x_3^2x_4\cr
&\quad - 4x_1x_2x_3x_4^2 - 4x_1x_3^2x_4^2 + 4x_2x_3^2x_4^2.\cr
\end{aligned}$$

None of the reduction techniques of Section \ref{sec:red} applies to this 
quintic in $\P^4$, nor to any other that can be constructed from the 
graph by the methods of this paper.
Let us consider the linear system of quadrics vanishing along
the three lines in the singular subscheme of $Q_{12}$ that contain the point
$(0:1:0:0:0)$ and at the isolated points of the singular subscheme: a basis is
$$x_0x_2, x_0x_3, x_0x_4, x_1x_2+x_1x_3,x_3x_4.$$
It defines an invertible rational map from $Q_{12}$ to a threefold $T$ which
is of degree $4$ and therefore prime-trivial.  The components of the loci along 
which this map
and its inverse are not defined are all shown to be prime-trivial by the
Chevalley-Warning theorem, except that $(0:1:0:0:0) \in Q_{12}$ goes to the
surface in $\P^4$ defined by
$$\begin{aligned}
& x_0 + x_1 = x_1^3x_4 - x_1^2x_2x_3 - 2x_1^2x_2x_4 + x_1x_2^2x_3 \cr
&\quad + x_1x_2^2x_4 - 4x_1x_4^3 - 4x_2x_3^2x_4 - 4x_2x_3x_4^2 = 0.\cr
\end{aligned}
$$
which is therefore prime-similar to $Q_{12}$.  Removing the linear equation,
we get a surface $S_3$ in $\P^3$, which has singularities of type $A_6, A_4, A_2$
at $(0:1:0:0),(0:0:1:0),(1:1:0:0)$.

\begin{remark} This construction could be described in terms of blowing up
$Q_{12}$ at $(0:1:0:0:0)$.  The exceptional divisor consists of three planes
and the blowup is singular along one of them.  If we normalize the blowup
along this plane and pull back the plane to the normalization, we obtain a
surface birationally equivalent to $S_3$.  But in the absence of a result 
comparable to Proposition \ref{lin-sim}
that shows how to choose the right blowup a priori,
this is only a trick, not a method.
\end{remark}

The quadrics vanishing at the first two
of these give a birational equivalence with a surface $S_7 \subset \P^7$ with an
$A_4$ singularity at $(0:0:0:-1:0:1:0:0)$, and projecting away from this point
we obtain a model $S_6 \subset \P^6$ with two $A_1$ and four $A_2$ 
singularities. 
Applying Lemma \ref{bad-primes} to the the singular subscheme of $S_3$ we find 
that $S_3$ has good reduction outside $\{2,3,5,7\}$, and in fact $S_6$ has
good reduction at $7$.

The sublattice of $\Pic S_6$ generated by the components of exceptional 
divisors, lines passing through a singular point, and lines introduced by 
blowing up the $A_4$ singularity has rank $19$ and is fixed by 
$\Gal(\bar \Q/\Q)$.  To obtain a twentieth generator, we take the line
$x_0 - x_1 = x_2 - \alpha x_3$ on $S_3$, where $\alpha^2+\alpha+1 = 0$.
It is not fixed by the action of $\Gal (\bar \Q/\Q)$,
so it is independent of the previous ones, and
$\Pic S_3$ has rank $20$.

All of the components of the irreducible divisors of a resolution $\tilde S_6$
of $S_6$
are rational, so the resolution has $10p$ more $\F_p$-points than $S_6$ does.
So the trace of the representation on $H^{2,0}+H^{0,2}$ of $\tilde S_6$ at
$\Frob_p$ is
$[S_6]_p+10p-p^2-19p-(-3/p)-1$.  To apply Livn\'e's method, we first verify
that the trace is always even.  There are $32$ cubic fields unramified outside
$\{2,3,5\}$ \cite{tables}, but every one of them has an inert prime $\le 19$,
so it is enough to check that far.  To conclude, we show that the trace
is equal to $a_p$ for a set of primes $> 5$ whose Frobenius 
elements give a non-cubic subset in $\Q(\sqrt{-1},\sqrt 2, \sqrt 3, \sqrt 5)$.
We only need to go up to $p = 73$.

%\begin{myempty}\label{graphs-wt-4}
%\end{myempty}
%\includegraphics{rev-rigid-cy3-figure2.pdf}
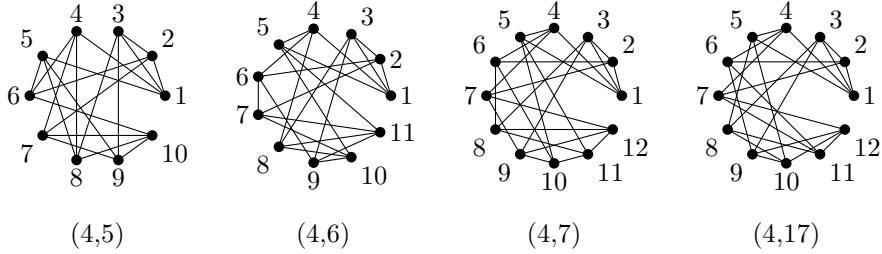
\begin{figure}[ht]\label{graphs-wt-4}
\begin{center}
\begin{tikzpicture}
%\matrix[column sep=0.4cm, row sep=0.2cm]{
\path node[matrix,column sep=0.4cm, row sep=0.2cm]{
%% (4,5)
\coordinate[label=right:$1$] (v1) at (0:0.9); % {$1$};
\coordinate[label=above right:$2$] (v2) at (2*pi/10 r:0.9); % {$2$};
\coordinate[label=above:$3$] (v3) at (4*pi/10 r:0.9); % {$3$};
\coordinate[label=above:$4$] (v4) at (6*pi/10 r:0.9); % {$4$};
\coordinate[label=above left:$5$] (v5) at (8*pi/10 r:0.9); % {$5$};
\coordinate[label=left:$6$] (v6) at (10*pi/10 r:0.9); % {$6$};
\coordinate[label=below left:$7$] (v7) at (12*pi/10 r:0.9); % {$7$};
\coordinate[label=below:$8$] (v8) at (14*pi/10 r:0.9); % {$8$};
\coordinate[label=below:$9$] (v9) at (16*pi/10 r:0.9); % {$9$};
\coordinate[label=below right:$10$] (v10) at (18*pi/10 r:0.9); % {$10$};
%\coordinate[label=below:$(4,5)$] (vlab) at (3*pi/2 r:0.9.8);
\draw (v1) -- (v2);
\draw (v1) -- (v3);
\draw (v1) -- (v4);
\draw (v1) -- (v5);
\draw (v2) -- (v3);
\draw (v2) -- (v6);
\draw (v2) -- (v7);
\draw (v3) -- (v8);
\draw (v3) -- (v9);
\draw (v4) -- (v6);
\draw (v4) -- (v7);
\draw (v4) -- (v8);
\draw (v5) -- (v6);
\draw (v5) -- (v8);
\draw (v5) -- (v9);
\draw (v6) -- (v10);
\draw (v7) -- (v9);
\draw (v7) -- (v10);
\draw (v8) -- (v10);
\draw (v9) -- (v10);
\fill (v1) circle (2pt) (v2) circle (2pt) (v3) circle (2pt) (v4) circle (2pt) (v5) circle (2pt) (v6) circle (2pt) (v7) circle (2pt) (v8) circle (2pt) (v9) circle (2pt) (v10) circle (2pt); & % (v11) circle (2pt); &

%(4,6)
\coordinate[label=right:$1$] (w1) at (0:0.9); % {$1$};
\coordinate[label=right:$2$] (w2) at (2*pi/11 r:0.9); % {$2$};
\coordinate[label=above right:$3$] (w3) at (4*pi/11 r:0.9); % {$3$};
\coordinate[label=above:$4$] (w4) at (6*pi/11 r:0.9); % {$4$};
\coordinate[label=above left:$5$] (w5) at (8*pi/11 r:0.9); % {$5$};
\coordinate[label=left:$6$] (w6) at (10*pi/11 r:0.9); % {$6$};
\coordinate[label=left:$7$] (w7) at (12*pi/11 r:0.9); % {$7$};
\coordinate[label=below left:$8$] (w8) at (14*pi/11 r:0.9); % {$8$};
\coordinate[label=below:$9$] (w9) at (16*pi/11 r:0.9); % {$9$};
\coordinate[label=below right:$10$] (w10) at (18*pi/11 r:0.9); % {$10$};
\coordinate[label=right:$11$] (w11) at (20*pi/11 r:0.9); % {$10$}; 
\draw (w1) -- (w2);
\draw (w1) -- (w3);
\draw (w1) -- (w4);
\draw (w1) -- (w5);
\draw (w2) -- (w3);
\draw (w2) -- (w6);
\draw (w2) -- (w7);
\draw (w3) -- (w8);
\draw (w3) -- (w9);
\draw (w4) -- (w5);
\draw (w4) -- (w6);
\draw (w4) -- (w8);
\draw (w5) -- (w10);
\draw (w5) -- (w11);
\draw (w6) -- (w7);
\draw (w6) -- (w9);
\draw (w7) -- (w10);
\draw (w7) -- (w11);
\draw (w8) -- (w10);
\draw (w8) -- (w11);
\draw (w9) -- (w10);
\draw (w9) -- (w11);
\fill (w1) circle (2pt) (w2) circle (2pt) (w3) circle (2pt) (w4) circle (2pt) (w5) circle (2pt) (w6) circle (2pt) (w7) circle (2pt) (w8) circle (2pt) (w9) circle (2pt) (w10) circle (2pt) (w11) circle (2pt); &

%(4,7)
\coordinate[label=right:$1$] (x1) at (0:0.9); % {$1$};
\coordinate[label=above right:$2$] (x2) at (2*pi/12 r:0.9); % {$2$};
\coordinate[label=above right:$3$] (x3) at (4*pi/12 r:0.9); % {$3$};
\coordinate[label=above:$4$] (x4) at (6*pi/12 r:0.9); % {$4$};
\coordinate[label=above left:$5$] (x5) at (8*pi/12 r:0.9); % {$5$};
\coordinate[label=above left:$6$] (x6) at (10*pi/12 r:0.9); % {$6$};
\coordinate[label=left:$7$] (x7) at (12*pi/12 r:0.9); % {$7$};
\coordinate[label=below left:$8$] (x8) at (14*pi/12 r:0.9); % {$8$};
\coordinate[label=below left:$9$] (x9) at (16*pi/12 r:0.9); % {$9$};
\coordinate[label=below:$10$] (x10) at (18*pi/12 r:0.9); % {$10$};
\coordinate[label=below right:$11$] (x11) at (20*pi/12 r:0.9); % {$10$}; 
\coordinate[label=below right:$12$] (x12) at (22*pi/12 r:0.9); % {$11$}; 
\draw (x1) -- (x2);
\draw (x1) -- (x3);
\draw (x1) -- (x4);
\draw (x1) -- (x5);
\draw (x2) -- (x3);
\draw (x2) -- (x6);
\draw (x2) -- (x7);
\draw (x3) -- (x8);
\draw (x3) -- (x9);
\draw (x4) -- (x5);
\draw (x4) -- (x6);
\draw (x4) -- (x7);
\draw (x5) -- (x10);
\draw (x5) -- (x11);
\draw (x6) -- (x8);
\draw (x6) -- (x10);
\draw (x7) -- (x9);
\draw (x7) -- (x12);
\draw (x8) -- (x11);
\draw (x8) -- (x12);
\draw (x9) -- (x10);
\draw (x9) -- (x12);
\draw (x10) -- (x11);
\draw (x11) -- (x12);
\fill (x1) circle (2pt) (x2) circle (2pt) (x3) circle (2pt) (x4) circle (2pt) (x5) circle (2pt) (x6) circle (2pt) (x7) circle (2pt) (x8) circle (2pt) (x9) circle (2pt) (x10) circle (2pt) (x11) circle (2pt) (x12) circle (2pt); &

%(4,17)
\coordinate[label=right:$1$] (y1) at (0:0.9); % {$1$};
\coordinate[label=above right:$2$] (y2) at (2*pi/12 r:0.9); % {$2$};
\coordinate[label=above right:$3$] (y3) at (4*pi/12 r:0.9); % {$3$};
\coordinate[label=above:$4$] (y4) at (6*pi/12 r:0.9); % {$4$};
\coordinate[label=above left:$5$] (y5) at (8*pi/12 r:0.9); % {$5$};
\coordinate[label=above left:$6$] (y6) at (10*pi/12 r:0.9); % {$6$};
\coordinate[label=left:$7$] (y7) at (12*pi/12 r:0.9); % {$7$};
\coordinate[label=below left:$8$] (y8) at (14*pi/12 r:0.9); % {$8$};
\coordinate[label=below left:$9$] (y9) at (16*pi/12 r:0.9); % {$9$};
\coordinate[label=below:$10$] (y10) at (18*pi/12 r:0.9); % {$10$};
\coordinate[label=below right:$11$] (y11) at (20*pi/12 r:0.9); % {$10$}; 
\coordinate[label=below right:$12$] (y12) at (22*pi/12 r:0.9); % {$11$}; 
\draw (y1) -- (y2);
\draw (y1) -- (y3);
\draw (y1) -- (y4);
\draw (y1) -- (y5);
\draw (y2) -- (y3);
\draw (y2) -- (y6);
\draw (y2) -- (y7);
\draw (y3) -- (y8);
\draw (y3) -- (y9);
\draw (y4) -- (y5);
\draw (y4) -- (y6);
\draw (y4) -- (y7);
\draw (y5) -- (y8);
\draw (y5) -- (y10);
\draw (y6) -- (y9);
\draw (y6) -- (y11);
\draw (y7) -- (y11);
\draw (y7) -- (y12);
\draw (y8) -- (y10);
\draw (y8) -- (y11);
\draw (y9) -- (y10);
\draw (y9) -- (y12);
\draw (y10) -- (y12);
\draw (y11) -- (y12);
\fill (y1) circle (2pt) (y2) circle (2pt) (y3) circle (2pt) (y4) circle (2pt) (y5) circle (2pt) (y6) circle (2pt) (y7) circle (2pt) (y8) circle (2pt) (y9) circle (2pt) (y10) circle (2pt) (y11) circle (2pt) (y12) circle (2pt); \\
\node{(4,5)}; & \node{(4,6)}; & \node{(4,7)}; & \node{(4,17)};\\};
%% \node[below=2cm,align=flush center,text width=10cm]
%% {{\sc Figure 3.} Graphs for which the $c_p$ are congruent mod $p$ to the eigenvalues
%% of forms of weight $4$ and level $5, 6, 7, 17$.};
\end{tikzpicture}
\end{center}
\caption{Graphs for which the $c_p$ are congruent mod $p$ to the eigenvalues
of forms of weight $4$ and level $5, 6, 7, 17$.}
\end{figure}

\subsection{Weight 4, level 5}\label{w4-lev5}
We now turn to graphs that give forms of weight $4$.
The graph corresponding to the form of level $5$ is $G_{10,56}$.
Lemma 55 of \cite{k3-phi4} does not apply to any of the graphs obtained
by deleting one vertex from this graph, but we can still attempt the reduction.
Using linear, resultant, and normal reduction has not yet led to 
varieties of dimension less than $4$.  For example, if
we delete vertex $2$, take the five-invariant for edges 
$$(1,3),(1,4),(1,5),(3,8),(7,9),$$ and then reduce by edges
$$(3,9),(4,6),(9,10),(4,7),(7,10),$$ we obtain a fourfold whose point counts
match the newform of weight $4$ and level $5$ for small $p$, as claimed in
\cite{mf-qft}.  
Fortunately, Proposition \ref{lin-sim} comes to our rescue.  It applies to
the set of variables $\{x_0,x_2,x_3,x_5\}$ to give us a double cover $O_5$
of an octic in $\P^3$, defined by
$$\begin{aligned}
t^2 = & x_2(x_1+x_2+x_3)(x_0x_1+x_0x_3+x_1x_3)(x_0x_1^2x_2 + x_0x_1x_2^2 + 4x_0^2x_1x_3 + 4x_0x_1^2x_3 \cr
&\quad + 4x_0^2x_2x_3 + 6x_0x_1x_2x_3 + x_1^2x_2x_3 + x_0x_2^2x_3 + 
x_1x_2^2x_3 + x_0x_2x_3^2 + x_1x_2x_3^2).
\end{aligned}$$
The following conjecture has been checked for $p < 200$.

\begin{conj}\label{w4l5}
For all primes $p>2$
the threefold $O_5$ has $p^3+5p^2-(9+(-1/p))p+1-a_p$
points over $\F_p$, where $a_p$ is the
eigenvalue for $T_p$ of the newform of weight $4$ and level $5$.
\end{conj}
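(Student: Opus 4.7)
The plan is to reproduce for $O_5$ the three-step strategy that succeeded for $CY_{13}$ in Sections 4--6 and for the forms of levels $78$ and $390$: (i) construct a Cynk-Szemberg resolution $\widetilde{O_5}$ of $O_5$ and show it is a rigid Calabi-Yau threefold, (ii) count $\F_p$-points on $\widetilde{O_5}$ in terms of the configuration of the branch locus, and (iii) prove modularity by Livn\'e's method, using the point count and the set of primes of bad reduction.

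For step (i), I would first note that the branch locus factors as a union of two planes, a quadric, and a quartic. I would then catalogue its singular subscheme: the curves of pairwise intersection, the codimension-$3$ loci where three or four of these components meet, and any internal singularities of the four components (in particular of the quartic, several of whose monomials vanish to high order at the coordinate points). Where the Cynk-Szemberg hypotheses as stated in Definition following Proposition \ref{count-octic} fail, one blows up the offending locus in $\P^3$ to obtain a base $T$ on which $\sum[B_i] = -2K_T$ is preserved; this is exactly the device used in Proposition \ref{rigid78} to handle the singular cubic curve $N$. Rigidity would then be deduced by the argument of Theorem \ref{rigid13}: the vanishing $H^1(\T_T \otimes K_T)=0$ holds if only points and rational curves are blown up, and the branch configuration has no continuous moduli, so \cite[Propositions 2.1--2.2]{cynk-vs} gives $h^{2,1}=0$.

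For step (ii), I would use Method \ref{count-points-dc}, Proposition \ref{char-dc}, and Proposition \ref{meet-point} (or the generalization in the remark following it, since the appearance of $(-1/p)$ in the formula signals that some exceptional divisor has components defined over $\Q(i)$ rather than $\Q$). Each class of intersection curve and each triple/quadruple point contributes a standard amount to the count, just as tabulated in Theorem \ref{all-pos} and in the table following Proposition \ref{rigid390}; one then sums these contributions and compares with the formula $p^3 + 5p^2 - (9 + (-1/p))p + 1 - a_p$.

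For step (iii), since the level $5$ modular form is likely to have even $a_p$ at almost all small primes, the mod $2$ Galois representation will be reducible and the Serre-Sch\"utt machine of Section 6 does not apply; instead I would follow the template used for level $78$. Concretely, I would bound the set $S$ of primes of bad reduction of $\widetilde{O_5}$ via Lemma \ref{bad-primes} applied to a Gr\"obner basis of the Jacobian ideal, use \cite{tables} to enumerate cubic extensions of $\Q$ unramified outside $S \cup \{2\}$ and check trace-evenness at an inert prime in each, and then verify within the maximal elementary abelian $2$-extension $\Q_S$ that the point count agrees with $a_p$ on a cubic-determining subset of $\Gal(\Q_S/\Q)$. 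The main obstacle is step (i): the quartic factor of the branch locus is considerably more singular than the components encountered for $CY_{13}$, so it is not clear a priori that a single sequence of blowups along rational curves gives a base on which Cynk-Szemberg applies and the relation $\sum[B_i]=-2K_T$ is maintained. Getting a clean resolution here, rather than one contaminated by blowups of curves of positive genus that would spoil rigidity (as warned in the remark after Proposition \ref{twod-def}), is likely to require either a different preliminary reduction of the graph polynomial or a subtler choice of blowup centers than in the examples already treated.
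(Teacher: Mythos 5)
The statement you are asked about is labeled a \emph{conjecture} in the paper, and the paper does not prove it: the only support given is a numerical check for all primes $p < 200$, followed by a one-sentence sketch that ``most likely this conjecture can be proved by constructing a crepant resolution with $cp^2 + (c+14+(-1/p))p$ more $\F_p$-points than $O_5$ for all $p$, for some suitable $c$, and showing that it is a rigid Calabi-Yau threefold.'' Your proposal is therefore not being measured against a proof, but against an unexecuted strategy, and on that comparison you are essentially in agreement with the paper. Your three-step plan --- Cynk-Szemberg resolution plus rigidity, point-count accounting via Method \ref{count-points-dc}, Proposition \ref{char-dc}, Proposition \ref{meet-point}, and then modularity by Livn\'e's method since the mod-$2$ representation is reducible --- is precisely the template the paper used for Theorem \ref{count-rigid}/Theorem \ref{all-counts} (level $13$) and Propositions \ref{rigid78}, \ref{rigid390} (levels $78$, $390$), and is what the paper says would ``most likely'' work here. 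Your identification of the branch factorization (two planes, a quadric, and a quartic), your reading of $(-1/p)$ as the signature of exceptional components defined over $\Q(i)$, and your observation that the central obstacle is obtaining a crepant resolution by blowing up only points and rational curves are all correct and match the paper's implicit concerns.

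The one thing to flag is that you framed this as a ``proof proposal,'' and like the paper you stop short of a proof: step (i) is not carried out, and you correctly acknowledge that the worse singularities of the quartic component may require either a different preliminary reduction of the graph polynomial or a subtler choice of blowup centers. That caveat is the right one and is exactly why the statement remains a conjecture in the paper. Nothing in your proposal is wrong, but it should not be mistaken for a proof; it is an accurate articulation of the proof strategy the paper envisions but does not execute.
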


Most likely this conjecture
can be proved by constructing a crepant resolution with
$cp^2+(c+14+(-1/p))p$ more $\F_p$-points than $O_5$ for all $p$, for some
suitable $c$, and showing that it is a rigid Calabi-Yau threefold.

Several examples of rigid Calabi-Yau threefolds realizing this newform are
described in \cite[Section 6.1.1]{meyer-diss}.  The Tate conjecture
predicts that there are correspondences between these and the one described
here; since the construction is apparently unrelated, it would be very 
interesting to find such a correspondence.

\subsection{Weight $4$, level $6$}\label{w4-lev6}
Similarly, for level $6$, we start from $G_{11,241}$.  We delete vertex $5$,
take the five-invariant for the edges $$(1,2),(1,3),(2,3),(1,4),(4,8)$$
(to which \cite[Lemma 55]{k3-phi4} applies), then reduce edges in the order
$$(4,6),(2,6),(7,10),(7,11),(8,10),(9,10)$$ to obtain a fivefold for which
the number of points mod $p$ matches the newform of weight $4$ and level $6$.

First we apply subspace reduction to the variables $\{x_0, x_3\}$
to produce a hypersurface
in $\P^4 \times \P^1$ of bidegree $(5,2)$, which is birational to a double cover
of $\P^4$ branched over a threefold of degree $10$.  The branch locus has 
components of degree $4, 6$, so by Proposition \ref{to-hyp} we return to a
sextic hypersurface in $\P^5$.  This one satisfies the hypothesis of
Proposition \ref{lin-sim} for $\{x_0, x_1, x_2, x_3\}$, so we may apply subspace
reduction again.  We find a double octic 
$O_6$ defined by
$$\begin{aligned}
t^2 &= -x_1x_2x_3(x_0^4x_2 - 2x_0^2x_1^2x_2 + x_1^4x_2 - 4x_0^2x_1x_2^2 + 
4x_1^3x_2^2 - 2x_0^2x_2^3 + 6x_1^2x_2^3 \cr
&\quad + 4x_1x_2^4 + x_2^5 + x_0^4x_3 - 2x_0^2x_1^2x_3 + x_1^4x_3 - 12x_0^2x_1x_2x_3 \cr
&\quad + 8x_1^3x_2x_3 - 6x_0^2x_2^2x_3 + 18x_1^2x_2^2x_3 + 16x_1x_2^3x_3 
+ 5x_2^4x_3 - 4x_0^2x_1x_3^2 \cr
&\quad + 4x_1^3x_3^2 - 4x_0^2x_2x_3^2 + 12x_1^2x_2x_3^2 + 12x_1x_2^2x_3^2 + 
4x_2^3x_3^2).\cr
\end{aligned}$$

Let $\alpha(p) = 1$ if $p \equiv 1 \bmod 8$ and $0$ otherwise.  We then have the
following conjecture:

\begin{conj}\label{w4l6}
For all primes $p>3$ the number of $\F_p$-points of $O_6$ is equal to
$p^3+5p^2-(11+4\alpha(p)+(-3/p))p+1-a_p$, where $a_p$ is the eigenvalue of $T_p$
for the newform of weight $4$ and level $6$.
\end{conj}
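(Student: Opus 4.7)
The plan is to follow the blueprint established for $CY_{13}$, the level $78$ example in Proposition \ref{rigid78}, and the level $390$ example in Proposition \ref{rigid390}: produce a crepant Cynk-Szemberg resolution $\widetilde{O_6}$ of $O_6$, verify it is a rigid Calabi-Yau threefold, count $\F_p$-points at every stage of the resolution, and then identify the Galois representation on $H^3$ with that of the newform $f_{6,4}$ by Livn\'e's method.

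First I would analyze the branch divisor $x_1 x_2 x_3 \cdot S_6 = 0$, where $S_6$ denotes the sextic factor. The three planes $x_i = 0$ are smooth, and I expect $S_6$ to have isolated singularities together with a finite list of singular curves along which it meets the planes non-transversely. I would classify these singularities (ordinary double points, $A_n$-curves, and multiple points on the planes) and identify which fourfold intersection points require blowing up first, which double curves must be blown up next, and which nodes above the branch locus admit only non-projective small resolutions. The expected relation $\sum |B_i| = -2K$ on the successive bases must be preserved at every step, exactly as in the proof of Theorem \ref{is-cy}. Once this combinatorial/geometric picture is in hand, rigidity will follow from the argument of Theorem \ref{rigid13} and Proposition \ref{twod-def}: the octic should be maximally singular inside an appropriate deformation family of double octics branched over configurations of planes and a residual surface, so that $H^1(\T_{\widetilde B} \otimes K_{\widetilde B}) = 0$ combined with $h^1(\T_{\widetilde{O_6}}) = $ (dim.~of equisingular deformations of the branch locus) $ = 0$ forces $h^{2,1}(\widetilde{O_6}) = 0$.

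Next I would count points. The base contributions from blowing up fourfold points, threefold points, and each curve in the singular locus follow Propositions \ref{count-four}, \ref{all-pos}, and \ref{meet-point} together with Method \ref{count-points-dc}. The two corrective quantities in the formula have clear geometric meaning: the $(-3/p)p$ correction should arise from a small resolution of a pair of nodes conjugate over $\Q(\sqrt{-3})$ (a scenario already treated in Proposition \ref{rigid78}), and the $4\alpha(p)p$ correction should come from four pairs of nodes whose resolutions have tangent directions defined over $\Q(\sqrt 2, i)$, so that the small resolution is $\F_p$-rational precisely when $p \equiv 1 \pmod 8$. Verifying that the relevant fields of definition are really these, and not larger biquadratic fields, is a computation but not a conceptual difficulty. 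Summing all local contributions and combining with a direct Magma point count of $[O_6]_p$ for $p \le 200$ would confirm the formula for a resolution $\widetilde{O_6}$, and writing it in the form $p^3 + c(p^2+p) + 1 - a_p$ then predicts $\rank \Pic \widetilde{O_6}$ and the Galois action on it.

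Finally, for the modularity statement I expect the mod $2$ representation attached to $f_{6,4}$ to be reducible (the eigenvalues $a_p$ are even for $p=5,7,11,13$), so the Serre-Sch\"utt argument used for $CY_{13}$ does not apply and I would instead use Livn\'e's method as in Propositions \ref{rigid78} and \ref{rigid390}. The steps are: determine the set $S$ of primes of bad reduction of $\widetilde{O_6}$ via Lemma \ref{bad-primes} applied to the Jacobian ideal of the defining sextic (expected to be $\{2,3\}$ together with at most one extra prime), check using the tables of \cite{tables} that every cubic extension of $\Q$ unramified outside $S$ has a small inert prime, so that both traces are even, then take $\Q_S$ the compositum of quadratic extensions unramified outside $S$ and exhibit a finite set of primes whose Frobenius classes in $\Gal(\Q_S/\Q)$ are not annihilated by any nonzero cubic polynomial and on which the two traces agree. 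The main obstacle will be the geometric bookkeeping: correctly accounting for all small resolutions of nodes over quadratic fields so that the corrective terms combine to exactly $-(4\alpha(p) + (-3/p))p$, since a single miscounted pair of conjugate nodes would change the answer by $\pm 2p$ and invalidate the formula.
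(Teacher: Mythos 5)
The statement you are addressing is a \emph{conjecture} in the paper, not a proved theorem: the paper verifies the formula numerically for $p < 200$ and, echoing the discussion around Conjecture \ref{w4l5}, remarks only that a proof would ``most likely'' go through constructing a crepant resolution of $O_6$ and showing it to be a rigid Calabi-Yau threefold. Your plan matches that suggested route, so there is no mismatch of strategy; but there is no paper proof to compare against, and the author's comment in the level-$17$ subsection that ``the Cynk-Szemberg resolution needed to prove this true for all $p$ could only be constructed after several preliminary blowups'' signals that the geometric bookkeeping in these later examples is genuinely delicate, more so than you may be allowing for.

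Two corrections to your description of the geometry, both pointing at the one place where your account diverges from what the paper actually asserts. First, the residual component of the branch locus of $O_6$ is a \emph{quintic}, not a sextic: the branch octic is $x_1 x_2 x_3$ times a degree-$5$ form, so it decomposes as three coordinate planes and a quintic surface. Second, the paper gives a different explanation of the $4\alpha(p)$ term than the one you propose. It says that the quintic component has singularities at $(-2\zeta_8 : \zeta_8^2 - 1 : -\zeta_8^2 - 1 : 1)$ and its Galois conjugates, a single orbit of four points conjugate over $\Q(\zeta_8)$, and that $4\alpha(p)$ is precisely the number of degree-$1$ primes of $\Q(\zeta_8)$ above $p$. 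Since $\Q(\zeta_8) = \Q(\sqrt 2, i)$ this is compatible with your field, but the mechanism the paper indicates is four conjugate singular points of the quintic, each contributing $p$ to the count of a resolution when $\F_p$-rational, rather than ``four pairs of nodes whose resolutions have tangent directions defined over $\Q(\sqrt 2, i)$'' (the latter being the pattern seen in Proposition \ref{rigid78}, not asserted here). You correctly identify that misattributing a conjugate orbit of singularities shifts the coefficient of $p$ and breaks the formula; the part of your proposal that does so is exactly this step.
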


This conjecture has been checked for $p < 200$, and much
the same remarks on a proof apply to it as to Conjecture \ref{w4l5}.
Again in \cite[Section 6.1.2]{meyer-diss}, we find many rigid threefolds
realizing the same form, and it would be interesting to find a correspondence.
The term $4\alpha(p)$ is the number of primes of degree
$1$ above $p$ in $\Q(\zeta_8)$; this is
explained by the quintic component of the branch locus having singularities
at $(-2\zeta_8:\zeta_8^2-1:-\zeta_8^2-1:1)$ and its conjugates.

\begin{remark} It is possible to reach dimension $4$ by only linear and 
resultant reduction, but subspace reduction does not apply to these varieties.
\end{remark}

\subsection{Weight $4$, level $7$}
The graph is $G_{12,1330}$.
If we delete vertex $5$, take the $5$-invariant for the edges
$$(1,2),(1,3),(2,3),(1,4),(4,7),$$ and then reduce the edges in the order
$$(4,6),(2,6),(8,12),(11,12),(6,10),(10,11),(8,11),(9,10),(7,9),$$
we obtain a subvariety of $\P^5$ to which the normal reduction can be applied.
Converting the resulting double cover of $\P^3$ into a quintic by means of
Proposition \ref{to-hyp}, we then consider the linear system of quadrics
vanishing on the singular points contained in components of the singular
subscheme of degree greater than $8$.  It gives us a new quintic.  Again
we take quadrics vanishing on singular points in components of the singular
subscheme of degree $8$; this gives us a map to $\P^6$.  The singular 
subscheme of the image has one component $C_1$ of degree $7$ and three 
$D_1, D_2, D_3$ of degree $19$;
if we project away from the point in $C_1$ and the point in the correct one of
the $D_i$ we obtain a quintic that can be converted back into a double cover
$O_7$ of $\P^3$ with the equation
$$\begin{aligned}
t^2 &= x_0(x_0x_1x_2 + x_1^2x_2 + x_1^2x_3 - x_0x_2x_3 - x_1x_2x_3 - x_2^2x_3 - x_1x_3^2 - x_2x_3^2) \times\cr
&\quad (x_0^2x_1x_2 + x_0x_1^2x_2 + x_0x_1^2x_3 - x_0^2x_2x_3 - x_0x_1x_2x_3 \cr
&\quad + 4x_1^2x_2x_3 - x_0x_2^2x_3 - 4x_1x_2^2x_3 - x_0x_1x_3^2 - x_0x_2x_3^2 - 4x_1x_2x_3^2).\cr
\end{aligned}$$

We find the formula
$$p^3+5p^2-(11+3(-3/p)+(5/p))p+1-a_p$$
for the number of $\F_p$-points of $O_7$  for $5 \le p \le 200$,
where $a_p$ is the Hecke eigenvalue for the newform of weight $4$ and level 
$7$.  As in the last two examples we conjecture this to be true for all 
larger $p$.

It seems that the only Calabi-Yau threefold known to be associated to this
modular form is that provided by the Kuga-Sato construction, namely a resolution
of the fibre square of the universal elliptic curve over $X_0(7)$
\cite[Section 1]{deligne}.

\subsection{Weight $4$, level $17$}
The graph here is $G_{12,1321}$.
If we delete vertex $5$, take the $5$-invariant with respect to the edges
$$(1,2),(1,3),(2,3),(1,4),(4,7),$$ 
and perform linear and resultant reductions for the edges
$$(4,6),(2,6),(8,11),(9,10),(9,12),(8,10),(3,8),(10,12),(7,12),$$
we obtain a polynomial of degree $6$ in $6$ variables to which the normal
reduction can be applied.  Thus we obtain a double cover of $\P^3$ branched
along surfaces of degree $3,5$, and we can convert this to a quintic as in
Proposition \ref{to-hyp}.  We simplify this quintic by considering polynomials
of degree $2$ vanishing on the points of the singular subscheme contained
in components of degree $\ge 8$.  This gives a subscheme of $\P^6$ of degree
$11$, and projecting away from a singular point of degree $13$ and one of the
two of degree $23$ produces a quintic that can be mapped back to a double cover
of $\P^3$ with branch components of degree $1,3,4$.  The associated double octic
$O$ is defined by
$$\begin{aligned}
&t^2 - x_0(x_1^2x_2 + x_0x_1x_3 + x_1^2x_3 - x_1x_2x_3 - x_2^2x_3 - x_1x_3^2 - x_2x_3^2) \times \cr
&\quad (x_0x_1^2x_2 + x_0^2x_1x_3 + x_0x_1^2x_3 + 3x_0x_1x_2x_3 + 4x_1^2x_2x_3 \cr
&\quad - x_0x_2^2x_3 - 4x_1x_2^2x_3 - x_0x_1x_3^2 - x_0x_2x_3^2 - 4x_1x_2x_3^2) = 0.\cr
\end{aligned}
$$

As found in \cite{mf-qft}, the point counts mod $p$ match the eigenvalues
of the newform of weight $4$ and level $17$.  
More precisely, for $5 \le p \le 200$,
the number of $\F_p$-points of $O$ is equal to
$p^3+5p^2-(11+3(-3/p))p+1-a_p$, where $a_p$ is the eigenvalue of $T_p$ on the
newform $17/1$.  Again, 
the singularities are quite complicated and the Cynk-Szemberg
resolution needed to prove this true for all $p$
could only be constructed after 
several preliminary blowups.
%% which I have not yet been able to perform.
As in the cases of levels $5$ and $6$, examples of products of
elliptic surfaces realizing the same newform have been
found (\cite[Sections 2.1--2.2]{meyer-diss}) and there is expected to be
a correspondence between these and the threefold constructed here.

\bibliography{art}

\end{document}